\definecolor{slightblue}{rgb}{.8, .8, 1}
\definecolor{hair}{RGB}{100,225,190}
\definecolor{ruby}{RGB}{220,50,120}
\definecolor{grass}{RGB}{150,220,110}
\definecolor{ceruleanblue}{rgb}{0.16, 0.32, 0.75}
\definecolor{deepcarmine}{rgb}{0.66, 0.13, 0.24}
\definecolor{otterbrown}{rgb}{0.4, 0.26, 0.13}
\definecolor{sapphire}{rgb}{0.03, 0.15, 0.4}
\newtheorem{theorem}{Theorem}[section] 
\newtheorem{lemma}[theorem]{Lemma}
\newtheorem{proposition}[theorem]{Proposition}
\theoremstyle{definition} 
\newtheorem{definition}[theorem]{Definition}
\newtheorem{expl}{Exploration process}
\newtheorem{ass}{Assumption}
\newtheorem{remark}[theorem]{Remark} \numberwithin{equation}{section}
\numberwithin{figure}{section}
\newcommand{\one}{\mathbf{1}}
\newcommand{\Cb}{\mathbb{C}}
\newcommand{\Db}{\mathbb{D}}
\newcommand{\Eb}{\mathbb{E}}
\newcommand{\Hb}{\mathbb{H}}
\newcommand{\Pb}{\mathbb{P}}
\newcommand{\Rb}{\mathbb{R}}
\newcommand{\Ac}{\mathcal{A}}
\newcommand{\Cc}{\mathcal{C}}
\newcommand{\Dc}{\mathcal{D}}
\newcommand{\Ec}{\mathcal{E}}
\newcommand{\Fc}{\mathcal{F}}
\newcommand{\Gc}{\mathcal{G}}
\newcommand{\Hc}{\mathcal{H}}
\newcommand{\Lc}{\mathcal{L}}
\newcommand{\Qc}{\mathcal{Q}}
\newcommand{\Wc}{\mathcal{W}}
\newcommand{\wt}{\widetilde}
\newcommand{\wh}{\widehat}
\newcommand{\ol}{\overline}
\DeclareFontFamily{OMX}{yhex}{}
\DeclareFontShape{OMX}{yhex}{m}{n}{<->yhcmex10}{}
\DeclareSymbolFont{yhlargesymbols}{OMX}{yhex}{m}{n}
\DeclareMathAccent{\wideparen}{\mathord}{yhlargesymbols}{"F3}
\newcommand*\rel@kern[1]{\kern#1\dimexpr\macc@kerna}
\newcommand*\wb[1]{%
	\begingroup
	\def\mathaccent##1##2{%
		\rel@kern{0.8}%
		\overline{\rel@kern{-0.8}\macc@nucleus\rel@kern{0.2}}%
		\rel@kern{-0.2}%
	}%
	\macc@depth\@ne
	\let\math@bgroup\@empty \let\math@egroup\macc@set@skewchar
	\mathsurround\z@ \frozen@everymath{\mathgroup\macc@group\relax}%
	\macc@set@skewchar\relax
	\let\mathaccentV\macc@nested@a
	\macc@nested@a\relax111{#1}%
	\endgroup
}
\def \eps {\varepsilon}
\newcommand{\dist}{\mathrm{dist}}
\newcommand{\ind}{\mathbf{1}}
\newcommand{\ba}{\Ac^+}
\newcommand{\bone}{\Ec_1^+}
\newcommand{\btwo}{\Ec_2^+}
\newcommand{\ione}{\Ec_1}
\newcommand{\itwo}{\Ec_2}
\newcommand{\gbtwo}{\Gc_2^+}
\newcommand{\gitwo}{\Gc_2}
\newcommand{\bxi}{\xi^+}
\newcommand{\ionec}{\ione^{\mathrm{C}}}
\title{Up-to-constants estimates on four-arm events\\ for simple conformal loop ensemble}
\author{Yifan Gao \and Pierre Nolin \and Wei Qian}
\date{\today}
\begin{document}
	\maketitle
	
	\begin{abstract}
We prove up-to-constants estimates for a general class of four-arm events in simple conformal loop ensembles, i.e.\ CLE$_\kappa$ for $\kappa\in (8/3,4]$. 
The four-arm events that we consider can be created by either one or two loops, with \emph{no constraint} on the topology of the crossings.
Our result is a key input in our series of works \cite{GNQ1,GNQ2} on percolation of the two-sided level sets in the discrete Gaussian free field (and level sets in the occupation field of the random walk loop soup).

In order to get rid of all constraints on the topology of the crossings, we rely on the Brownian loop-soup representation of simple CLE \cite{MR2979861}, and a ``cluster version'' of a separation lemma for the Brownian loop soup.
As a corollary, we also obtain up-to-constants estimates for a general version of four-arm events for SLE$_\kappa$  for $\kappa\in (8/3,4]$. This fixes (in the case of four arms and $\kappa\in(8/3,4]$) an essential gap in \cite{MR3846840} and improves some estimates therein.

	\end{abstract}
	
	\tableofcontents

\section{Introduction}

The conformal loop ensemble (CLE) is introduced in \cite{MR2494457} as a universal candidate for the scaling limit of a large class of discrete statistical physics models at criticality, e.g.\ Bernoulli percolation \cite{Sm2001, CN2006}, the Ising model \cite{CS2012, BH2019} and the random-cluster model with cluster weight $q=2$ (also known as FK Ising) \cite{Sm2010, KS2019, KS2016}.
For $\kappa\in(8/3,4]$, CLE$_\kappa$ in a simply connected domain $D\subsetneq \Cb$ is almost surely (a.s.) a countable collection of disjoint simple loops. For $\kappa\in (4, 8)$, the loops in CLE$_\kappa$ are a.s.\ non-simple and can touch each other.
In this paper, we work on the regime of simple CLE, i.e.\ where $\kappa \in (8/3,4]$.
Many basic properties of simple CLE are established in the foundational work \cite{MR2979861}.  In particular, CLE$_\kappa$ was constructed there using the outer boundaries of the outermost clusters in the Brownian loop soup with intensity $\alpha\in(0,1/2]$, where
\begin{align}\label{eq:c_kappa}
\alpha= (3\kappa-8)(6-\kappa)/(4\kappa).
\end{align}

In this paper, we focus on estimates on four-arm events in a simple CLE, in particular with a view to applying them to statistical physics models converging to such a CLE. More specifically, our aim is to derive, from these estimates, a qualitative description of the set of contact edges between clusters.
For example, the clusters of a subcritical or a critical random walk loop soup (RWLS) are known to converge to simple CLE \cite{BCL2016,Lu2019}, and the upper bounds on four-arm probabilities for CLE that we obtain in this paper serve as an input  in \cite{GNQ1}, where we derive various properties in the discrete setting, on four-arm events in the RWLS. The points where four arms occur in the RWLS correspond exactly to the contact edges between two clusters. 
Plugging in the four-arm estimates for the RWLS into a summation argument over all possible configurations of contact edges between the clusters, we prove in \cite{GNQ2} the existence of a phase transition for percolation of the level sets in the occupation field of a subcritical RWLS, and an analogous result at criticality. This analysis, via an isomorphism between the (critical) RWLS and the discrete Gaussian free field (GFF) \cite{MR2815763}, then yields corresponding results for two-sided level sets in the discrete GFF.

\smallskip

Arm exponents often happen to be an important quantity to analyze statistical physics models. The polychromatic arm exponents for Bernoulli percolation (and the one-arm exponent) have been first predicted by theoretical physics methods (see in particular \cite{ADA99}, and the references therein). They were established rigorously in \cite{SW2001}, shortly after the introduction of the Schramm-Loewner evolutions (SLE) \cite{MR1776084}.
More precisely, in that paper, Smirnov and Werner computed the arm exponents of SLE$_\kappa$ for $\kappa=6$, and then they appealed to the relation between SLE$_6$ and planar Bernoulli percolation at criticality \cite{Sm2001}.  
The proof in \cite{SW2001} relies on previous works \cite{MR1879850,MR1879851,MR1899232}, which computed other closely related exponents for SLE$_\kappa$ for $\kappa\in(0,8)$ by using suitable SLE martingales. 
The values of the  SLE arm exponents for all $\kappa\in (0,8)$ were also derived by physicists through the KPZ relation, see e.g.\ \cite{MR2112128,MR2581884}: 
For $j\ge 1$, the interior $2j$-arm exponent $\xi_{2j}$ and the boundary $j$-arm exponent $\bxi_j$ are given by
\begin{align}\label{eq:arm_exp}
 \xi_{2j}= \frac{16j^2-(\kappa-4)^2}{8\kappa}, \quad \bxi_{2j}= \frac{j(4j+4-\kappa)}{\kappa}, \quad \bxi_{2j-1}=\frac{(j-1)(4j+4-\kappa)}{\kappa}.
\end{align}
Up-to-constants estimates on the interior and boundary two-arm events for SLE$_\kappa$ were respectively obtained in  \cite{MR2435854} and \cite{MR2574734}, for all $\kappa\in(0,8)$, yielding (resp.), among other things, the Hausdorff dimension of SLE$_\kappa$ and a quantitative description of the proximity of SLE$_\kappa$ to the boundary.
Later on, it was proved in \cite{MR3846840} for $\kappa\in (0,4)$ (also see similar results  in \cite{MR3718717,MR3768961} for $\kappa\in (4,8)$) that $\Pb(\wt A_j)\asymp \eps^{\bxi_j}$ and $\Pb(A_{2j})= \eps^{\xi_{2j}+o(1)}$ as $\eps\to 0$, where $\wt A_j$ (resp.\ $A_{2j}$) stands for some specific boundary $j$-arm (resp.\ interior $2j$-arm)  event, namely that the SLE curve makes $j$ (resp.\ $2j$) arms, following a \emph{particular order and topology}, between $\partial B_\eps(z)$ and a given arc at macroscopic distance from $z$, where $z$ is a boundary (resp.\ interior) point in the domain. Here, the symbol $\asymp$ means that the ratio between the two sides remains bounded away from $0$ and $\infty$, and $B_\eps(z)$ denotes the open ball with radius $\eps$ centered on $z$ (in the following, we drop $z$ from the notation when $z=0$).
Thanks to the convergence of the Ising (resp.\ FK Ising) interfaces to SLE$_3$ (resp.\ SLE$_{16/3}$) \cite{MR3151886,MR3073886}, the estimates from \cite{MR3846840,MR3718717,MR3768961} on SLE arm exponents then yield \cite{MR3846840,MR3768961} the arm exponents for the critical Ising and FK Ising models.

Nevertheless, we have identified an essential gap in the proof of the upper bound $\Pb(A_{2j}) \leq \eps^{\xi_{2j}+o(1)}$ for the interior $2j$-arm event in \cite[Proposition 4.1]{MR3846840}, for $\kappa\in(0,4)$ and $j\ge 2$ 
(similar gaps also exist in the proofs of the upper bounds in \cite[Proposition 3]{MR3768961}, on three different types of interior SLE$_\kappa$ arm events, for $\kappa\in(4,8)$).
More precisely, we believe that the proof of this upper bound needs not only an estimate on a well-ordered boundary arm event as an input (as it is claimed there), but actually needs to rely on an estimate for a rather general boundary arm event, which is lacking in that paper. 
On the other hand, it seems that the proofs in \cite{MR3846840,MR3718717,MR3768961} on SLE arm events are highly dependent on the specific order and topology of the crossings, so they should require additional nontrivial arguments to get rid of these constraints. We refer to Remark~\ref{rmk:wu} for more details.

\smallskip

In this paper, we derive \emph{up-to-constants} estimates for a general form of four-arm events, with no condition on the order of crossings, for both the boundary case and the interior case, and for both CLE$_\kappa$ and SLE$_\kappa$, for $\kappa\in (8/3, 4]$.
A chordal SLE has two endpoints on the boundary of the domain, hence corresponds to the interface in a statistical physics model with Dobrushin boundary conditions. In contrast, the CLE describes the loop interfaces inside a model with ``homogeneous'' boundary conditions.
Since a CLE$_\kappa$ loop looks locally like an SLE$_\kappa$ curve, it is natural to expect that the arm exponents for CLE$_\kappa$ coincide with those for SLE$_\kappa$.
Once we establish the estimates for CLE$_\kappa$, we will immediately deduce analogous results for SLE$_\kappa$.

One can also consider the multiple SLE's, which correspond to the multiple interfaces in a model with alternating boundary conditions, see e.g.\ \cite{MR2358649, MR2187598, MR2310306}. In \cite{MR4082185,MR4235483}, Zhan has computed the Green's functions related to a system of $2$-SLE, where both SLE curves in the $2$-SLE go through a given ball $B_\eps(z)$, thus creating four arms. These estimates, recalled as Theorems~\ref{thm:zhan} and~\ref{thm:zhanb} below, are instrumental in our proofs.
On the other hand, the four arms in \cite{MR4082185,MR4235483} are created by two distinct SLE curves with four separate endpoints. In this paper, we work with the four-arm events for the CLE (as well as single SLE curves), for which the separation is not incorporated in the definition.

In order to treat all possible topologies of arm crossings, the main technical difficulty of this work is to devise and prove a well-suited separation lemma. Such separation properties have been established in various models, and play an important role in their analysis. It seems that the earliest version was obtained in Kesten's famous work \cite{Ke1987a} on two-dimensional percolation near criticality. Later, some versions for non-intersecting Brownian motions were shown by Lawler \cite{MR1386294,La1998}. There were then a great deal of generalizations afterwards; see e.g. \cite{No2008,GPS2013,du2022sharp,Ma2009,MR4291423,GLQ2022} for a non-exhaustive list of references. We postpone a more detailed discussion to Section~\ref{subsubsec3}.

\smallskip
We are now ready to state our up-to-constants estimates on the four-arm events for simple CLE (Section~\ref{subsubsec1}), which, as we just explained, rely on a key separation lemma (Section~\ref{subsubsec3}). As a consequence, we then derive up-to-constants estimates on the four-arm events for SLE (Section~\ref{subsubsec2}).

\subsection{Four-arm events for simple CLE}\label{subsubsec1}

For $\kappa\in(8/3,4]$, let $\Gamma^+$ (resp.\ $\Gamma$) be a CLE$_\kappa$ in the upper half-plane $\Hb$ (resp. the unit disk $\Db$). 
Let us first define the following general four-arm events, which correspond to the existence of four curves crossing a given annulus in the CLE$_\kappa$.
\begin{itemize}
\item (Arms by one loop) Let $\bone(\eps,r)$ (resp.\ $\ione(\eps,r)$)  be the event that there is a loop $\gamma$  in $\Gamma^+$ (resp.\ $\Gamma$) which makes $4$ crossings in the annulus $A_{\eps,r}:= B_r\setminus \overline{B_\eps}$, in other words $\gamma \cap \overline{A_{\eps,r}}$ contains at least $4$ curves, which each have one endpoint on $\partial B_r$ and one endpoint on $\partial B_\eps$.
\item (Arms by two loops) Let $\btwo(\eps,r)$ (resp.\ $\itwo(\eps,r)$) be the event that there are at least two loops $\gamma_1$ and $\gamma_2$ in $\Gamma^+$  (resp.\ $\Gamma$) such that each of them intersects both $\partial B_\eps$ and $\partial B_r$. 
\end{itemize}
We define the \emph{boundary four-arm event} by $\Ac_4^+(\eps, r): = \bone(\eps, r) \cup \btwo(\eps, r)$, and the \emph{interior four-arm event} by $\Ac_4(\eps, r):= \ione(\eps, r) \cup \itwo(\eps, r)$.
See Figure~\ref{fig:4arm_CLE} for an illustration of the possible configurations for the interior four-arm event.
\begin{figure}[b]
	\centering
	\includegraphics[width=0.85\textwidth]{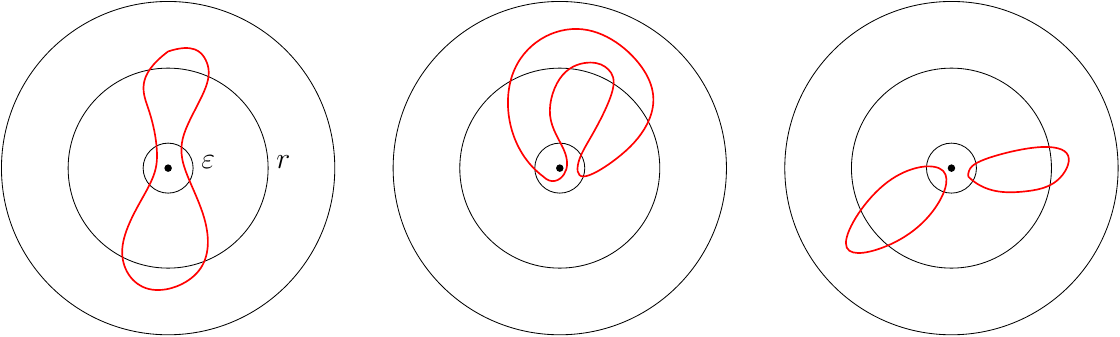}
	\caption{We depict three possible configurations of $\Ac_4(\eps, r)$. The left and middle figures belong to the event $ \ione(\eps, r)$. The right figure belongs to the event $\itwo(\eps, r)$.}
	\label{fig:4arm_CLE}
\end{figure}

Our first main result consists in the following up-to-constants estimates for four-arm events in CLE$_\kappa$.
\begin{theorem}\label{thm:main}
For $\kappa\in(8/3,4]$ and $r\in(0,1)$, we have the following estimates, as $\eps \to 0$,
\begin{align}
\label{eq:b4arm}
& \Pb(\bone(\eps, r)) \asymp \Pb(\btwo(\eps, r)) \asymp \Pb(\ba_4(\eps, r)) \asymp \eps^{\xi^+_{4}(\kappa)}, \\
\label{eq:i4arm}
& \Pb(\ione(\eps, r)) \asymp \Pb(\itwo(\eps, r)) \asymp \Pb(\Ac_4(\eps, r)) \asymp \eps^{\xi_{4}(\kappa)},
\end{align}
where
$\xi^+_4(\kappa)= 2(12-\kappa)/\kappa$ and $\xi_4(\kappa)=(12-\kappa)(\kappa+4)/(8\kappa)$.
\end{theorem}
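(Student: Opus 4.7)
The plan is to prove matching lower and upper bounds, with the upper bound being the main challenge because the events $\ba_4$ and $\Ac_4$ put no condition on how the four arms are ordered or nested.

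\textbf{Lower bounds.} For $\Pb(\btwo(\eps,r))$ and $\Pb(\itwo(\eps,r))$, I would construct an explicit well-separated favorable event in which two distinct CLE$_\kappa$ loops each make a single crossing of $A_{\eps,r}$, with their four endpoints forced to lie in four prescribed small arcs on $\partial B_\eps$ and $\partial B_r$. The key input is Zhan's Green-function estimates for $2$-SLE$_\kappa$ (Theorems~\ref{thm:zhan} and~\ref{thm:zhanb} in the excerpt), applied after conformally mapping out the macroscopic part of the configuration; these yield exactly the exponents $\xi^+_4(\kappa)$ and $\xi_4(\kappa)$. Since $\ba_4\supset\btwo$ and $\Ac_4\supset\itwo$, this also gives the matching lower bounds on the general four-arm events. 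The lower bounds on $\Pb(\bone)$ and $\Pb(\ione)$ can be obtained by a similar construction, conditioning on a single large CLE loop making two excursions into $A_{\eps,r}$.

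\textbf{Upper bound in the well-separated regime.} I would first prove the upper bound on a topologically tame version of the event in which the four crossings have endpoints confined to four prescribed arcs on each of $\partial B_\eps$ and $\partial B_r$. For the two-loop case, this follows from the $2$-SLE Green-function estimate applied to the two CLE loops (exploring them from the boundary if needed to realize them as SLE$_\kappa$-type curves). For the one-loop case, a similar estimate can be derived from SLE$_\kappa$ martingale calculations, using that a simple CLE loop around a disk is locally described by a suitable (two-sided radial) SLE$_\kappa$.

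\textbf{Removing the topology constraint via a separation lemma.} The main step is to pass from the well-separated upper bound to the general events $\ba_4(\eps,r)$ and $\Ac_4(\eps,r)$. Here I would use the Sheffield--Werner representation of CLE$_\kappa$ as outer boundaries of outermost clusters in the Brownian loop soup of intensity $\alpha$ given by \eqref{eq:c_kappa}. In this representation, the events $\bone$ and $\ione$ say that a single BLS cluster makes four crossings of $A_{\eps,r}$, and $\btwo,\itwo$ that two distinct clusters each make a crossing: the CLE four-arm event is thus a four-arm event for BLS clusters. The goal is to prove a \emph{cluster-level} separation lemma: conditionally on the existence of four cluster-crossings of $A_{\eps,r}$, with uniformly positive probability one can extend the clusters inside $A_{\eps/2,\eps}\cup A_{r,2r}$ (by adding extra loops of the soup) so that their endpoints on $\partial B_{\eps/2}$ and $\partial B_{2r}$ are quantitatively well-separated. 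Iterating this at geometric scales and combining with the well-separated upper bound gives the full upper bound up to a constant.

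\textbf{Main obstacle.} The hard part will be proving this separation lemma at the level of BLS \emph{clusters}. Classical separation lemmas for SLE curves or independent Brownian motions exploit strong independence across disjoint scales, whereas a single long loop in the soup can simultaneously affect arms on very different scales and can glue two a priori distinct clusters into one. Making the separation argument work therefore requires splitting the soup into a macroscopic part (the finitely many loops of diameter comparable to $r$, which must be conditioned on) and a mesoscopic/microscopic part, and then using the conformal-restriction-type properties of the BLS to run a Kesten-style gluing in a thin annulus without inadvertently merging clusters. Once the separation lemma is established, the identifications $\Pb(\bone)\asymp\Pb(\btwo)\asymp\Pb(\ba_4)\asymp\eps^{\xi^+_4(\kappa)}$ and the interior analogue follow by squeezing: the sub-events $\bone$ and $\btwo$ already have probability bounded below of the prescribed order, and their union $\ba_4$ has probability bounded above of the same order.
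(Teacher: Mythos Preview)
Your high-level strategy matches the paper: use the Brownian loop-soup representation of CLE$_\kappa$, prove a cluster-level separation lemma, and feed the well-separated configuration into Zhan's $2$-SLE Green-function estimates. Two structural points differ, and one of them matters.

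First, the paper does \emph{not} treat the one-loop and two-loop cases by separate analytic estimates. Instead it proves directly, by BLS surgery, that $\Pb(\ione(\eps,r))\asymp\Pb(\itwo(\eps,r))$ (and the boundary analogue): adding or removing a single Brownian loop in $A_{\eps,2\eps}$ that surrounds $B_\eps$ converts one event into the other at constant cost, and the harder direction $\ione\lesssim\itwo$ is obtained by a resampling argument that splits one crossing loop into two (using the separation lemma as input). After this reduction, only $\itwo$ needs to be estimated, and this is done via a Markovian CLE exploration that discovers initial arcs of both loops and realizes their remainders as a genuine $2$-SLE$_\kappa$, to which Zhan's theorems apply. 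Your proposal to handle the well-separated one-loop event by ``SLE$_\kappa$ martingale calculations'' or a two-sided radial description is not what the paper does and would essentially require the single-SLE interior four-arm upper bound---precisely the estimate whose existing proof has the gap discussed in the introduction. In the paper the logic runs the other way: the single-SLE four-arm estimate (Theorem~\ref{thm:sle_int_utc}) is \emph{deduced from} the CLE estimate.

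Second, your separation-lemma sketch (condition on macroscopic loops, run Kesten-style gluing on the rest) underestimates the difficulty the paper actually faces. Because loop-soup \emph{clusters}, unlike Brownian motions or SLE curves, have no canonical endpoints on $\partial B_s$, there is no purely geometric quality to iterate. The paper's key device is a hybrid quality $Q_\eps(L_s)=\sup\{\delta\in[0,1/4]:m_s(\delta;L_s)\ge\delta\}$, where $m_s(\delta;L_s)$ is the \emph{probability} (over fresh loops in the next dyadic shell) of finding a $\delta$-good pair of extending loops. Running the standard induction on this probabilistic quality requires an additional stability lemma comparing the arm event at scale $i$ to a ``stable'' version in which each crossing cluster already contains a sub-cluster crossing at scale $i-1$; this is Lemma~\ref{lem:quasi} and is where most of the work in the separation proof lies.
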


Let us comment briefly on the strategy of the proof of  Theorem~\ref{thm:main}. First, we make use of a Markovian exploration of the CLE, in the style of \cite{MR2979861}. We illustrate such explorations in our warm-up Section~\ref{sec:2arm}, where we derive up-to-constants estimates in the simpler situation of two-arm events in CLE. 
Secondly, the above-mentioned estimates on four-arm probabilities for $2$-SLE systems established by Zhan in \cite{MR4082185,MR4235483} (see Theorems~\ref{thm:zhan} and~\ref{thm:zhanb} below) play a central role. 
Finally, in our setting of CLE, we rely crucially on a separation lemma, which we discuss in the next subsection.

\subsection{Separation lemma}\label{subsubsec3}

We derive a separation lemma for CLE loops. Roughly speaking, it states that two crossing loops in a given annulus have a uniformly positive chance to get well-separated, e.g. near the outer boundary of the annulus. More specifically, consider CLE in $\Db$. On the event $\itwo(\eps, r)$, there are two loops $\gamma_1$ and $\gamma_2$ that cross the annulus $A_{\eps,r}$, and we can further define $\wt\itwo(\eps, r)$ to be the subevent that $\itwo(\eps, r)$ occurs with the additional requirement that $\gamma_j \subset (B_r \cup B_{r/10}(re^{i(j-1)\pi})) \setminus B_{r/10}(re^{i j \pi})$ for both $j=1,2$ (see Figure~\ref{fig:separation} for an illustration of that event). We prove the following.

\begin{figure}[b]
	\centering
	\includegraphics[width=0.33\textwidth]{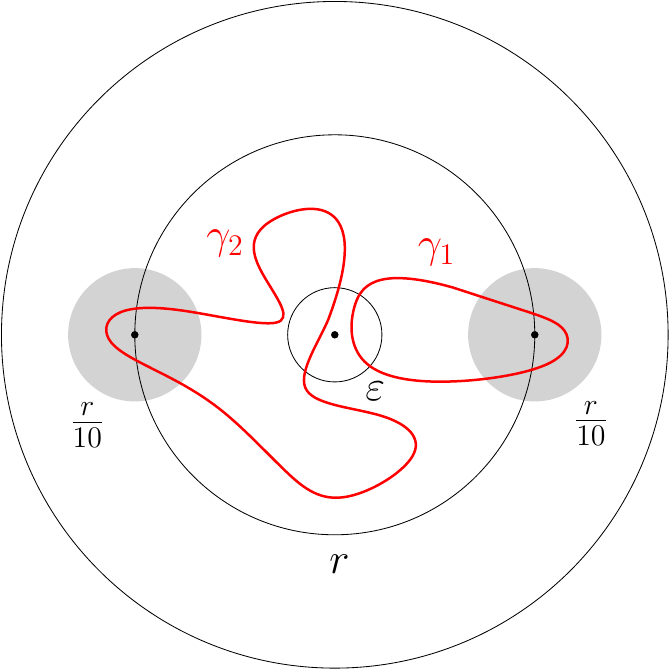}
	\caption{The well-separated arm event $\wt\itwo(\eps, r)$.}
	\label{fig:separation}
\end{figure}

\begin{theorem}\label{thm:sep_cle}
	For $\kappa\in(8/3,4]$, $0<2\eps<r<1/2$, we have
	\[
	\Pb(\itwo(\eps, r)) \lesssim \Pb(\wt\itwo(\eps, r)),
	\]	
	where the symbol $\lesssim$ involves an implicit constant which only depends on $\kappa, r$.
\end{theorem}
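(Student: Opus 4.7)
The plan is to lift the problem to the level of Brownian loop-soup clusters, via the representation of simple CLE from \cite{MR2979861}: for $\kappa \in (8/3,4]$, CLE$_\kappa$ is the collection of outer boundaries of outermost clusters in a Brownian loop soup $\Lc$ of intensity $\alpha \in (0,1/2]$ as in \eqref{eq:c_kappa}. The ``cluster version'' of the separation statement is the essential content; transferring it back to CLE loops is then a matter of checking that a spatial separation of clusters survives when one passes to their outer boundaries (or rather to the fillings they bound), which is automatic from the inclusion of the outer boundary in the filled hull of the cluster.

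\textbf{Reduction to clusters.} On the event $\itwo(\eps,r)$, there are two distinct loop-soup clusters $\Cc_1,\Cc_2$ each intersecting $\partial B_\eps$ and $\partial B_r$. Conversely, it suffices to produce, with uniform positive probability, two crossing clusters $\Cc_j$ such that $\Cc_j \subset (B_r \cup B_{r/10}(re^{i(j-1)\pi})) \setminus B_{r/10}(re^{ij\pi})$. So our target becomes a separation lemma for the Brownian loop soup at intensity $\alpha \le 1/2$.

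\textbf{Conditioning and enhancement in the outer annulus.} First I would decompose $\Lc$ into three independent parts: loops entirely in $B_{r/2}$ (the inner loops), loops entirely in $A_{r/2,\,3r/2}$ together with a neighborhood of $\partial B_r$ (the outer loops), and loops that straddle $\partial B_{r/2}$. Condition on the inner loops and on the straddling loops. Next I would show: on a subset of $\itwo(\eps,r)$ of comparable probability, the conditioned configuration already features two crossings of $A_{\eps,r/2}$ by distinct clusters $\Cc^{\mathrm{in}}_1,\Cc^{\mathrm{in}}_2$ (this can be arranged by exploring CLE loops inward along the lines of the warm-up in Section~\ref{sec:2arm}). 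Then, independently, I would exhibit a positive-probability enhancement event in the outer loops: the outer part of $\Lc$ contains a chain of loops realizing (i) a loop-soup connection of $\Cc^{\mathrm{in}}_1$ to a neighborhood of $+r$ staying in the right half $(B_r \cup B_{r/10}(r)) \setminus B_{r/10}(-r)$, (ii) a symmetric connection of $\Cc^{\mathrm{in}}_2$ on the left, and (iii) no loop-chain bridging the two halves or merging either cluster with a third one. The uniform positive-probability lower bound comes from the positive density of Brownian loops at every fixed scale inside any fixed Jordan domain and from the subcriticality $\alpha \le 1/2$, which prevents macroscopic cluster percolation and hence allows forbidden-connection events to occur with positive probability. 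I would implement (i)--(iii) through a dyadic block construction in the annular neighborhood of $\partial B_r$, similar in spirit to Kesten's separation lemma in percolation \cite{Ke1987a}, but at the level of loop-soup chains rather than individual arms.

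\textbf{Main obstacle.} Unlike independent percolation or non-intersecting random walks, loop-soup cluster connectivity is extremely sensitive: adding a single small loop can merge two clusters by chain reaction, so the positive events (required connections) and negative events (forbidden connections) cannot be enforced independently by simple local modifications. The heart of the argument is therefore a careful joint construction of a truncated enhancement event -- typically by first restricting to loops of diameter above some threshold $\delta$ and then controlling the effect of the sub-$\delta$ loops via a second-moment / non-percolation input at $\alpha \le 1/2$ -- and a corresponding careful accounting of the loops straddling $\partial B_{r/2}$, which were conditioned on but must not accidentally merge the inner clusters with unwanted outer structures. Finally, I would verify that the separation property for clusters descends to the separation of the \emph{outermost} clusters and then to the CLE loops themselves, so that the event obtained is indeed $\wt\itwo(\eps,r)$.
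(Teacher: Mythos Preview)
Your reduction to the Brownian loop-soup is correct and matches the paper's starting point, but the core of your argument has a genuine gap. Your scheme is essentially a single-scale enhancement: condition on the loops inside $B_{r/2}$ (and the straddling ones), obtain two inner crossing clusters $\Cc_1^{\mathrm{in}},\Cc_2^{\mathrm{in}}$ of $A_{\eps,r/2}$, and then use the independent outer loops to guide them into the two prescribed half-balls. The problem is that, on the conditioned event, $\Cc_1^{\mathrm{in}}$ and $\Cc_2^{\mathrm{in}}$ may emerge onto $\partial B_{r/2}$ arbitrarily close to each other (or be intertwined via the straddling loops in a complicated way). The probability of your enhancement event (i)--(iii) then depends on that separation and degenerates as it goes to zero; there is no uniform lower bound. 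In other words, your step ``independently, exhibit a positive-probability enhancement event in the outer loops'' presupposes exactly the separation statement you are trying to prove. The reference to Section~\ref{sec:2arm} does not help: the two-arm warm-up involves a single loop and no interaction between distinct clusters, which is the whole difficulty here.

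The paper resolves this by a genuinely multiscale argument with a nonstandard notion of quality. Because a purely geometric quality (distance between cluster tips on $\partial B_s$) is not stable under the addition of small loops, the paper defines the quality $Q_\eps(\Lc_s)$ at scale $s$ \emph{probabilistically}, as the largest $\delta$ such that the conditional probability (in the loops of the next dyadic shell) of finding a $\delta$-good extending pair of loops is at least $\delta$; see \eqref{eq:m_s}--\eqref{eq:quality}. The proof then runs the usual separation-lemma machine on this quality (Lemmas~\ref{lem:sep-2} and~\ref{lem:sep-1}), but it also needs an extra stability input, Lemma~\ref{lem:quasi}, comparing the local arm event $\Ac(i)$ to the ``stable'' version $\wt\Ac(i)$ in which each crossing cluster already contains a subcluster crossing at the previous scale. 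This stability step is what makes the recursion \eqref{eq:QML1} close, and it has no counterpart in your outline. Your proposed fix of truncating to loops above diameter $\delta$ and controlling the sub-$\delta$ loops by a second-moment or non-percolation argument does not by itself produce a quality that survives the scale-by-scale iteration; you would still need to explain why, at each dyadic scale, a configuration with poor geometric separation nonetheless has a uniformly positive chance of being extended well at the next scale, and this is precisely what the paper's hybrid quality is designed to capture.
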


Our proof relies on the correspondence \cite{MR2979861} between CLE$_\kappa$ and the outer boundaries of the outermost loop clusters in the Brownian loop soup (BLS) with intensity $\alpha$, where $\alpha$ is related to $\kappa$ by \eqref{eq:c_kappa}.
As we will see in the proof, separation lemmas of this type turn out to be more complicated than the ones established in \cite{GLQ2022,GNQ1} for loop soups. In latter cases, one usually starts with some Brownian motions or random walks, which can be used to keep track of the configuration. For example, in Lemma~\ref{lem:sep-jk}, we consider two Brownian motions crossing an annulus inside an independent BLS, and we study the non-intersection event associated with the Brownian motions together with the loop-soup clusters that they hook. Then, there is a natural way to describe how close they are, and how likely it is that they can be ``nicely'' extended to the next scale without intersecting, through some geometric quantity (see \eqref{eq:Qjk}), which is commonly called \emph{quality}. Then, the separation lemma reduces to analyzing the quality across scales, which is now a streamlined method. In order to use a similar method to prove Theorem~\ref{thm:sep_cle}, the first task is to find a suitable notion of quality associated with loop-soup clusters. However, in that case, it seems difficult to express the quality in some \emph{purely} geometric way, as in \eqref{eq:Qjk}. We found a well-suited definition of quality, see \eqref{eq:quality} below, which combines geometric and probabilistic information simultaneously (in terms of extension probabilities through ``good'' pairs of Brownian loops). By analyzing carefully this object (in combination with an additional stability result on arm events, Lemma~\ref{lem:quasi}), we can then adapt the standard framework to establish separation, and thus show Theorem~\ref{thm:sep_cle}.

\subsection{Four-arm events for SLE}\label{subsubsec2}
As a corollary of our estimates for CLE$_\kappa$, we also deduce analogous results on a general form of four-arm events for SLE$_\kappa$, defined in the following. 
\begin{itemize}
\item (Boundary four-arm event for SLE) %\label{def:b4arm_sle}
Let $\eta$ be a chordal SLE$_\kappa$ in $\Hb$ from $0$ to $\infty$. For $1>r>\eps>0$, let $\tau_1$ be the first time that $\eta$ hits $B_\eps(1)$, let $\sigma_1$ be the first time after $\tau_1$ that $\eta$ hits $\partial B_r(1)$, and let $\tau_2$ be the first time after $\sigma_1$ that $\eta$ hits $B_\eps(1)$. We introduce
\begin{align}\label{eq:def_4arm_b}
\Wc^+_4 (\eps,r):=\{\tau_2<\infty\}.
\end{align}

\item (Interior four-arm event for SLE) %\label{def:4arm_sle}
Let $\eta$ be a chordal SLE$_\kappa$ in $\Db$ from $1$ to $a\in \partial \Db \setminus \{1\}$. For $1>r>\eps>0$, let $\tau_1$ be the first time that $\eta$ hits $B_\eps$, let $\sigma_1$ be the first time after $\tau_1$ that $\eta$ hits $\partial B_r$, and let $\tau_2$ be the first time after $\sigma_1$ that $\eta$ hits $B_\eps$. We introduce
\begin{align}\label{eq:def_4arm_int}
\Wc_4 (a, \eps, r):=\{\tau_2<\infty\}.
\end{align}
\end{itemize}

\begin{theorem}\label{thm:sle_int_utc}
For $\kappa\in(8/3,4]$, $1>r>\eps>0$ and $a\in \partial \Db \setminus \{1\}$, we have, as $\eps\to 0$,
\begin{align}
\label{eq:sle1}
&\Pb[\Wc^+_4 (\eps,r)] \asymp \eps^{\bxi_4(\kappa)},\\
\label{eq:sle2}
&\Pb[\Wc_4 (a, \eps, r)] \asymp \eps^{\xi_4(\kappa)}.
\end{align}
The implicit constants in \eqref{eq:sle1} (resp.\ \eqref{eq:sle2}) only depend on $\kappa, r$ (resp.\  $\kappa, a, r$).
\end{theorem}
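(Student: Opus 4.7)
The plan is to deduce Theorem~\ref{thm:sle_int_utc} from the CLE estimates in Theorem~\ref{thm:main}, by coupling the chordal SLE$_\kappa$ curve with CLE$_\kappa$ loops and exploiting the separation lemma Theorem~\ref{thm:sep_cle}. I treat in detail the interior estimate \eqref{eq:sle2}; the boundary estimate \eqref{eq:sle1} is handled analogously upon replacing $(\Db,1,a,\ione,\xi_4)$ by $(\Hb,1,\infty,\bone,\bxi_4)$ and using the boundary analog of the separation lemma. By a M\"obius map of $\Db$ I may assume that the accumulation point of the four-arm event is at the origin, so that on $\Wc_4(a,\eps,r)$ the simple curve $\eta$ (simplicity holds since $\kappa\le 4$) makes exactly two crossings and hence four arms across $A_{\eps,r}$.

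\textbf{Upper bound.} To obtain $\Pb[\Wc_4]\lesssim \eps^{\xi_4(\kappa)}$, I would couple $\eta$ with a CLE$_\kappa$ in $\Db$ in such a way that, with positive probability depending only on $\kappa,r,a$, the curve $\eta$ is a portion of a single loop $L$ of this CLE. Such a coupling can be built from the Sheffield--Werner construction \cite{MR2979861}: conditional on $\eta$, sample an independent Brownian loop soup of intensity $\alpha$ in the components of $\Db\setminus\eta$ together with independent SLE$_\kappa(\kappa-6)$-type ``closing'' arcs along the two boundary arcs from $1$ to $a$, and take the outer boundaries of outermost BLS clusters to form a CLE$_\kappa$ $\Gamma$; in this coupling $\eta$ lies on a single loop $L\in\Gamma$. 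On $\Wc_4$, the loop $L$ then makes four crossings of $A_{\eps,r}$, so $\ione(\eps,r)\subseteq\Ac_4(\eps,r)$ occurs, and Theorem~\ref{thm:main} yields the desired upper bound.

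\textbf{Lower bound.} For the matching bound $\Pb[\Wc_4]\gtrsim \eps^{\xi_4}$, I would argue in the reverse direction. Theorem~\ref{thm:main} provides a CLE loop $\gamma$ making four crossings of $A_{\eps,r}$ with probability $\gtrsim \eps^{\xi_4}$, and a one-loop version of Theorem~\ref{thm:sep_cle} (established by the same techniques) allows one to further well-separate these four arms near $\partial B_r$. Conditionally on such a well-separated configuration, I would exploit the conformal Markov property of CLE to remove all other loops and ``open up'' $\gamma$ at two well-chosen points close to $1$ and $a$ on $\partial\Db$; using the local SLE$_\kappa$ nature of the CLE loop boundary together with a resampling argument, the resulting open curve agrees (up to a bounded Radon--Nikodym density) with a chordal SLE$_\kappa$ from $1$ to $a$, and the preserved four crossings of $\gamma$ then realize $\Wc_4$ with positive probability uniform in $\eps$.

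\textbf{Main obstacle.} The principal technical difficulty lies in making the SLE$\leftrightarrow$CLE coupling quantitative. Converting a closed CLE loop into an open chordal SLE (for the lower bound), or closing a chord into a loop (for the upper bound), requires gluing/cutting surgeries that must preserve the four-arm configuration; the separation lemma Theorem~\ref{thm:sep_cle} is essential here, since without well-separated arms these surgeries may easily destroy the four-arm event or introduce spurious additional crossings.
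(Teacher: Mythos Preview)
Your upper bound has a genuine gap. For $\kappa\in(8/3,4]$ the loops of CLE$_\kappa$ in $\Db$ almost surely do not touch $\partial\Db$, so a chordal SLE$_\kappa$ from $1$ to $a$ (two boundary points) can \emph{never} be a portion of a loop $L\in\Gamma$. The coupling you sketch---BLS in $\Db\setminus\eta$ plus ``SLE$_\kappa(\kappa-6)$-type closing arcs''---does not produce a CLE in $\Db$ containing $\eta$ as part of a loop; at best it produces independent CLEs in the two components of $\Db\setminus\eta$, which says nothing about $\eta$ itself. The paper circumvents this by running the argument in the \emph{other} direction: it explores the CLE (Exploration process~\ref{expl2}) until a loop is partially discovered, uniformizes, and observes that the remainder of the loop is a chordal SLE$_\kappa$ in $\Db$ between two \emph{random} boundary points $f(a),f(b)$. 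If the SLE upper bound failed, this would force $\Pb[\ione(\eps,s)]\ge C\eps^{\xi_4}$ for arbitrarily large $C$, contradicting Theorem~\ref{thm:main}. The price is that the endpoints are random, so one needs an auxiliary lemma (Lemma~\ref{lem:assump}) showing that failure of the upper bound for one pair $(a_0,r_0)$ propagates to all $(a,r)$ with $r\le r_0/8$; this is done by steering the initial segment of $\eta$ into a tube (Lemma~\ref{lem:sle_tube}) and stopping at the first time the driving point image equals $a_0$.

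Your lower bound is also different from the paper's and considerably more fragile. The paper simply quotes the lower bounds already established in \cite{MR3846840} (specifically \eqref{eq:2j-1} for the boundary case and \eqref{eq:lower} for the interior case), which are not affected by the gap discussed in Remark~\ref{rmk:wu}. Your proposed route---separate the four arms of a single CLE loop and then ``open it up'' into a chordal SLE with bounded Radon--Nikodym derivative---requires a one-loop separation lemma that is not in the paper (Theorem~\ref{thm:sep_cle} and Proposition~\ref{prop:sep_i} are about two distinct loops), and the opening surgery with controlled density is asserted but not justified; making this precise would be substantial additional work.
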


\begin{figure}[b]
	\centering
	\includegraphics[width=\textwidth]{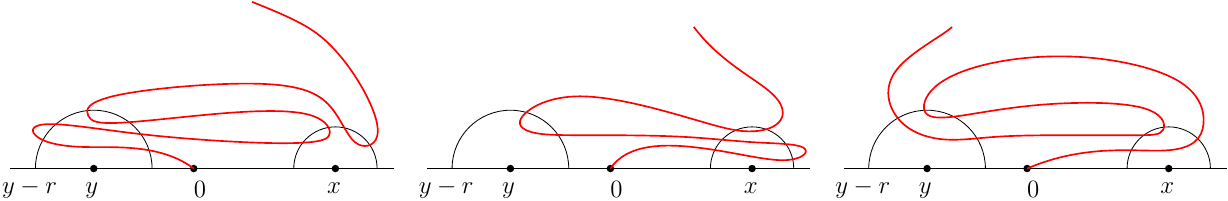}
	\caption{We depict three simple cases of boundary four-arm events (the two half-disks, centered on $x$ and $y$, have respective radii $\eps$ and $r$). None of them belongs to the event $ \Hc^\alpha_3(\eps, x, y,r)$ or $ \Hc^\alpha_4(\eps, x, y,r)$ considered in \cite{MR3846840}: roughly speaking, the curve does not hit the right arcs in the right order (see \eqref{eq:h1} and \eqref{eq:h2} for precise definitions). Our Theorem~\ref{thm:sle_int_utc} implies in particular that their probabilities are upper-bounded by a constant times $\eps^{\bxi_4(\kappa)}$.}
	\label{fig:4arm_bdy}
\end{figure}

\begin{figure}
	\centering
	\includegraphics[width=\textwidth]{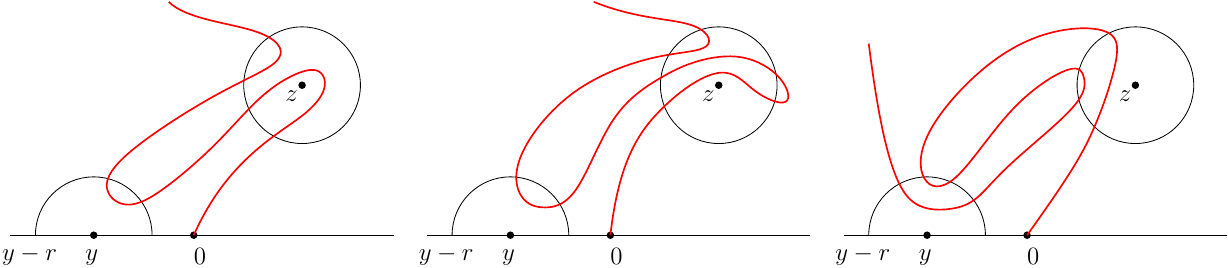}
	\caption{We depict three simple cases of interior four-arm events (where the disk around $z$ and the half-disk around $y$ have radii $\eps$ and $r$, respectively). None of them belongs to the event $ \Ec_4(\eps, z, y,r)$ considered in \cite{MR3846840} (see \eqref{eq:int_arm_event} for a precise definition). Our Theorem~\ref{thm:sle_int_utc} implies, after a conformal map from $\Hb$ onto $\Db$, that the probabilities of these events are upper-bounded by a constant times $\eps^{\xi_4(\kappa)}$.}
	\label{fig:4arm_event}
\end{figure}

We would like to make the following remarks, to compare our results to previous results on SLE arm exponents obtained in \cite{MR3846840}.
\begin{itemize}
\item We recall in Section~\ref{subsec:comparison} the precise definitions of the boundary $2j$-arm events $\Hc^\alpha_{2j-1}(\eps, x, y, r)$, $\Hc^\alpha_{2j}(\eps, x, y, r)$ and the interior $2j$-arm events $\Ec_{2j}(\eps, z, y, r)$ considered in \cite{MR3846840}. These definitions are very restrictive on the topology of the crossings, in the sense that they need to hit exactly a specific arc each time.
For instance, for the case of four-arm events, Theorem~\ref{thm:sle_int_utc} implies up-to-constant upper bounds on the events in Figures~\ref{fig:4arm_bdy} and~\ref{fig:4arm_event}, which were \emph{not} considered in \cite{MR3846840}.
On the other hand, the lower bounds in \eqref{eq:sle1} and \eqref{eq:sle2} can be deduced from  \cite{MR3846840}.

\item In fact, it is this specific upper bound that was obtained in \cite[Proposition 4.1]{MR3846840} in the interior case:
\begin{align*}
\Pb[\Ec_{2j}(\eps, z, y,r) \cap \Fc] \leq \eps^{\xi_{2j}+o(1)},
\end{align*}
for some well-suited event $\Fc$, involving a non-explicit quantity $R$ (see Proposition~\ref{prop:sle_int} below). As we explain in Remark~\ref{rmk:wu}, we fix (in the case $j=2$ and $\kappa\in(8/3,4]$) an essential gap in the proof of that result. Furthermore, we give a better upper bound, which is up-to-constant.

\end{itemize}

\subsection{Organization of the paper}

After collecting some useful earlier results in Section~\ref{sec:prelim}, we show estimates on two-arm probabilities in Section~\ref{sec:2arm}, as a warm-up for the more complicated situation of four arms. We then derive separation properties in Section~\ref{sec:separation}, where we prove a strenghtened version of Theorem~\ref{thm:sep_cle}, which is suitable for our later proofs (Proposition~\ref{prop:sep_i}). In that section, which is the core of our paper, we also deduce the equivalence of various four-arm events from the separation result. We then use these tools to establish up-to-constants estimates for four-arm events in CLE (Theorem~\ref{thm:main}) in Section~\ref{sec:est_CLE}, before obtaining our general four-arm estimates for SLE (Theorem~\ref{thm:sle_int_utc}) in Section~\ref{sec:est_SLE}.

\section{Preliminaries} \label{sec:prelim}
In this section, we recall a few results that will be useful later on. In the whole paper, we let $\Hb:=\{ z \in \Cb : \Im z >0 \}$ be the upper half-plane, and we let $\Db$ denote the unit disk (with center $0$ and radius $1$). For any $z \in \Cb$, let $B_r(z) := \{z' \in \Cb : |z-z'| < r\}$ be the ball with radius $r > 0$ centered on $z$, and $A_{r,R}(z):=\{ z' \in \Cb : r < |z-z'| < R \}$ be the annulus with radii $0 < r < R$ around $z$. Furthermore, we write $B_r = B_r(0)$ and $A_{r,R} = A_{r,R}(0)$. If a set $C$ intersects both $\partial B_r(z)$ and $\partial B_R(z)$, we say that it \emph{crosses} the annulus $A_{r,R}(z)$.

\begin{lemma}[Koebe $1/4$ in the upper half-plane]\label{lem:Koebe}
Let $K\subset \ol\Hb$ be a compact such that $H:=\Hb\setminus K$ is simply connected.
Let $f$ be a conformal map from $H$ onto $\Hb$ with $f(\infty) =\infty$.
For $z\in \Rb$ and $r>0$
such that $B_r(z) \cap \Hb \subseteq H$, we have
\begin{align}\label{eq:kb2}
B_{f'(z) r/4}(f(z)) \cap \Hb \subseteq f(B_r(z) \cap \Hb) \subseteq B_{4f'(z) r} (f(z)) \cap \Hb.
\end{align}
\end{lemma}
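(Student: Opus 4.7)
The natural approach is to reduce Lemma~\ref{lem:Koebe} to the classical Koebe $1/4$ theorem and the Koebe growth (distortion) theorem on the unit disk, via the Schwarz reflection principle. The point is that the hypothesis $B_r(z)\cap \Hb \subseteq H$ places the real segment $(z-r,z+r)$ on $\partial H$, along which $f$ takes real values and may therefore be reflected.

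\textbf{Step 1: Schwarz reflection.} Since $z \in \Rb$ and $B_r(z) \cap \Hb \subseteq H$, the open real interval $I := (z-r, z+r)$ lies on $\partial H \cap \Rb$. The conformal map $f:H\to \Hb$ sends $\partial H$ to $\partial \Hb=\Rb$ and, since $f(\infty)=\infty$, extends continuously to $I$ with values in $\Rb$. By the Schwarz reflection principle, $f$ then extends to a univalent holomorphic map $\tilde f$ on the whole disk $B_r(z)$ satisfying $\tilde f(\bar w) = \overline{\tilde f(w)}$. In particular $\tilde f(z)=f(z)\in\Rb$, $\tilde f'(z)=f'(z)>0$, and $\tilde f$ sends $B_r(z)\cap \Hb$ into $\Hb$, $B_r(z)\cap \Rb$ into $\Rb$, and $B_r(z)\cap(-\Hb)$ into $-\Hb$; in particular $\tilde f(B_r(z)\cap\Hb)=\tilde f(B_r(z))\cap \Hb$.

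\textbf{Step 2: Normalize and apply Koebe.} Define $\phi(\xi):=(\tilde f(z+r\xi)-f(z))/(r f'(z))$ on $\Db$, which is univalent with $\phi(0)=0$ and $\phi'(0)=1$. The Koebe $1/4$ theorem gives $\phi(\Db)\supseteq B_{1/4}$, while the growth theorem gives $|\phi(\xi)|\leq |\xi|/(1-|\xi|)^2$. Undoing the normalization turns these into
\[
\tilde f(B_r(z))\supseteq B_{rf'(z)/4}(f(z)), \qquad |\tilde f(w)-f(z)|\leq rf'(z)\cdot\frac{|w-z|/r}{(1-|w-z|/r)^2}.
\]
Intersecting the first with $\Hb$ and using the reflection symmetry from Step~1 yields the lower inclusion $f(B_r(z)\cap \Hb)\supseteq B_{rf'(z)/4}(f(z))\cap\Hb$. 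The second bound, together with the same symmetry, gives the upper inclusion $f(B_r(z)\cap\Hb)\subseteq B_{4rf'(z)}(f(z))\cap\Hb$.

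\textbf{Main obstacle.} The only subtlety is the upper inclusion: the Koebe growth estimate blows up as $|w-z|\to r$, so strictly speaking one must apply the argument on a very slightly larger disk $B_{r'}(z)$, which is harmless because $B_r(z)\cap \Hb\subseteq H$ is an open condition (so the extension from Step~1 is in fact valid on some $B_{r'}(z)$ with $r'>r$). Equivalently, the constant $4$ on the right-hand side of \eqref{eq:kb2} is crude and absorbs the distortion bound on any compactly contained sub-disk. Either way, the reasoning is routine once the Schwarz-reflected picture of Step~1 is available.
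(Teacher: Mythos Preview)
Your approach via Schwarz reflection and Koebe's theorems matches the paper's, and your argument for the lower inclusion is correct. However, there is a genuine gap in your treatment of the upper inclusion. Your claim that ``$B_r(z)\cap\Hb\subseteq H$ is an open condition, so the extension is valid on some $B_{r'}(z)$ with $r'>r$'' is false. For a concrete counterexample, let $z=0$, $r=1$, and take $K$ to be an arc of $\partial B_1$ lying in $\ol\Hb$ with one endpoint on $\Rb$, say $K=\{e^{i\theta}:\theta\in[\pi/3,\pi]\}$. Then $H=\Hb\setminus K$ is simply connected and $B_1\cap\Hb\subseteq H$, yet $B_{r'}\cap\Hb\not\subseteq H$ for every $r'>1$, since $K\subset\partial B_1$. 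Without a larger disk, the growth bound $|\phi(\xi)|\leq|\xi|/(1-|\xi|)^2$ blows up as $|\xi|\to1$ and gives nothing; your alternative remark that ``the constant $4$ absorbs the distortion bound on any compactly contained sub-disk'' does not salvage this, because the bound is unbounded near $\partial B_r(z)$ and you need the inclusion on the \emph{full} open ball.

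The paper's proof differs precisely here: it reflects $f$ not merely across the segment $(z-r,z+r)$ but across all of $\Rb\setminus K$, producing a conformal bijection on the \emph{global} reflected domain $D=\Cb\setminus(K\cup\bar K)$. Then both $\tilde f$ and its inverse are available on large domains (the image being $\Cb\setminus J$ for some compact $J\subset\Rb$), and one can invoke Koebe $1/4$ for the inverse map as well. Your local reflection on $B_r(z)$ alone does not give access to this global inverse, which is what is needed for the upper inclusion.
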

\begin{proof}
By the Schwarz reflection principle, we can extend $f$ to a bijective conformal map from $D=\Cb\setminus (K \cup \{\bar z \mid z\in K\})$ to $f(D)$. The lemma then follows from Koebe $1/4$ theorem applied to $D$ and the extended map $f$.
\end{proof}

We will need the following result from  \cite{MR2435854} (2002), which establishes $\xi_2$ as the  interior two-arm exponent for SLE$_\kappa$. A version of this result was first established in  \cite[Proposition 4]{MR2435854} in the upper half-plane, but later we will in fact use the following version for general domains.

\begin{proposition}[Corollary 5, \cite{MR2435854}]
\label{prop:beffara}
Let $D\subsetneq \Cb$ be a simply connected domain, and $a, b$ be two points on $\partial D$. 
Fix $\kappa\in(0,8)$. Let $\eta$ be an SLE$_\kappa$ in $D$ from $a$ to $b$. For all $z\in D$ and $\eps< d(z, \partial D)/2$, we have, as $\eps\to 0$,
\begin{align*}
\Pb(\eta \cap B_\eps(z) \not=\emptyset) \asymp \left(\frac{\eps}{d(z, \partial D)}\right)^{\xi_2(\kappa)} (\omega_z(ab) \wedge \omega_z(ba))^{\bxi_2(\kappa)},
\end{align*}
where $\omega_z$ is the harmonic measure on $\partial D$ seen from $z$, and $ab$ is the positively oriented arc from $a$ to $b$ along $\partial D$.
\end{proposition}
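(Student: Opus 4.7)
The plan is to reduce the general-domain statement to the analogous half-plane estimate established in \cite[Proposition 4]{MR2435854}, via conformal invariance of SLE and Koebe distortion.

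First, I would choose a conformal map $\phi\colon D\to\Hb$ with $\phi(a)=0$ and $\phi(b)=\infty$. By conformal invariance of chordal SLE$_\kappa$, the image $\wt\eta:=\phi\circ\eta$ is a chordal SLE$_\kappa$ in $\Hb$ from $0$ to $\infty$, and the hitting event $\{\eta\cap B_\eps(z)\neq\emptyset\}$ becomes $\{\wt\eta\cap\phi(B_\eps(z))\neq\emptyset\}$. Since $\eps<d(z,\partial D)/2$, the disk $B_\eps(z)$ is compactly contained in $D$, so the interior Koebe distortion theorem (applied to $\phi$ on the largest disk around $z$ contained in $D$) sandwiches the image:
\[
B_{|\phi'(z)|\eps/4}\bigl(\phi(z)\bigr)\;\subseteq\;\phi\bigl(B_\eps(z)\bigr)\;\subseteq\;B_{4|\phi'(z)|\eps}\bigl(\phi(z)\bigr).
\]
A closely related consequence of the same theorem is the geometric equivalence $d(z,\partial D)\,|\phi'(z)|\asymp\Im\phi(z)$.

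Next, I would invoke Beffara's half-plane estimate: for $w=x+iy\in\Hb$ and $0<\delta<y/2$,
\[
\Pb\bigl(\wt\eta\cap B_\delta(w)\neq\emptyset\bigr)\;\asymp\;\left(\frac{\delta}{y}\right)^{\xi_2(\kappa)}\bigl(\omega_w(\Rb_-)\wedge\omega_w(\Rb_+)\bigr)^{\bxi_2(\kappa)}.
\]
Applying this twice (with $\delta=|\phi'(z)|\eps/4$ for the lower bound on the hitting probability and $\delta=4|\phi'(z)|\eps$ for the upper bound) and using the sandwich of the previous paragraph produces
\[
\Pb\bigl(\eta\cap B_\eps(z)\neq\emptyset\bigr)\;\asymp\;\left(\frac{|\phi'(z)|\eps}{\Im\phi(z)}\right)^{\xi_2(\kappa)}\bigl(\omega_{\phi(z)}(\Rb_-)\wedge\omega_{\phi(z)}(\Rb_+)\bigr)^{\bxi_2(\kappa)}.
\]

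Finally, I would convert both factors back to intrinsic quantities. The geometric equivalence above gives $|\phi'(z)|\eps/\Im\phi(z)\asymp\eps/d(z,\partial D)$, and conformal invariance of harmonic measure identifies $\omega_{\phi(z)}(\Rb_+)=\omega_z(ab)$ and $\omega_{\phi(z)}(\Rb_-)=\omega_z(ba)$ (after matching orientations of the two boundary arcs with the positive and negative real axes). Substituting yields exactly the stated estimate. The only non-cosmetic point is to verify that $|\phi'(z)|\asymp\Im\phi(z)/d(z,\partial D)$ holds with universal constants for arbitrary simply connected $D$, but this is a standard consequence of Koebe's $1/4$ theorem applied to $\phi$ on the maximal inscribed disk at $z$; everything else in the argument is routine conformal-invariance bookkeeping.
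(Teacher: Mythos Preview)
The paper does not supply its own proof of this proposition: it is quoted verbatim as Corollary~5 of \cite{MR2435854}, with the remark that the upper-half-plane version was first proved as Proposition~4 there. Your argument is the natural (and correct) reduction of the general-domain statement to that half-plane version via conformal invariance of SLE$_\kappa$, Koebe distortion for the image of $B_\eps(z)$, the standard equivalence $d(z,\partial D)\,|\phi'(z)|\asymp \Im\phi(z)$, and conformal invariance of harmonic measure; this is exactly how one passes from Proposition~4 to Corollary~5 in Beffara's paper, so there is nothing to compare.
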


We will also need the following result from \cite{MR2574734} (2007), which establishes $\bxi_2$ as the boundary two-arm exponent for SLE$_\kappa$.
The statement that we cite below follows immediately from \cite[Lemma 2.1, Lemma 2.2 and Proposition 2.3]{MR2574734}.
\begin{proposition}[\cite{MR2574734}]
\label{prop:schramm}
Fix $\kappa\in(0,8)$. Let $\eta$ be an SLE$_\kappa$ in $\Hb$ from $0$ to $\infty$. For $x>0$, as $\eps\to 0$,
\begin{align*}
\Pb(\eta \cap B_\eps(x)\not=\emptyset) \asymp (\eps/x)^{\bxi_2(\kappa)}.
\end{align*}
\end{proposition}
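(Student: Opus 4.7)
The plan is to use the classical SLE-martingale method (à la Lawler--Schramm--Werner and Beffara), adapted to the boundary setting. By scale invariance of chordal SLE$_\kappa$ in $\Hb$ from $0$ to $\infty$, one may assume $x = 1$. The key tool will be an SLE local martingale whose value encodes, up to multiplicative constants, the Euclidean distance from the hull to the marked boundary point $1$.

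Concretely, let $(g_t)$ denote the chordal Loewner maps, $W_t = \sqrt{\kappa} B_t$ the driving function, and $Y_t := g_t(1) - W_t$ (defined up to the time $1$ is swallowed). Using the standard SDEs $dY_t = \tfrac{2}{Y_t}\,dt - dW_t$ and $\partial_t g_t'(1) = -2 g_t'(1)/Y_t^2$, an It\^o calculation shows that $(g_t'(1))^a Y_t^b$ is a local martingale exactly when $a = b + \kappa b(b-1)/4$. Imposing the scale-invariance condition $a + b = 0$, so that the candidate depends only on the ratio $Y_t/g_t'(1)$, forces $b = 1 - 8/\kappa$ and $a = (8-\kappa)/\kappa = \bxi_2(\kappa)$, producing the nonnegative local (super-)martingale
\[
M_t := \bigl(g_t'(1)/Y_t\bigr)^{\bxi_2(\kappa)}, \qquad M_0 = 1.
\]
By Lemma~\ref{lem:Koebe} applied at the boundary point $1$ (combined with standard half-plane conformal-radius comparisons), $Y_t/g_t'(1)$ is comparable, up to universal constants, to $\dist(1, K_t)$, where $K_t$ denotes the hull of $\eta[0,t]$. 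Hence, setting $\tau_\eps := \inf\{t \geq 0 : \dist(1, K_t) \leq \eps\}$, we have $M_{\tau_\eps} \asymp \eps^{-\bxi_2(\kappa)}$ on $\{\tau_\eps < \infty\}$. The upper bound then follows immediately: optional stopping of the supermartingale $M$ at $\tau_\eps \wedge T$ and Fatou's lemma give $c\,\eps^{-\bxi_2(\kappa)}\,\Pb(\tau_\eps < \infty) \leq \Eb[M_{\tau_\eps} \ind_{\tau_\eps < \infty}] \leq 1$, and therefore $\Pb(\eta \cap B_\eps(1) \neq \emptyset) \lesssim \eps^{\bxi_2(\kappa)}$ after absorbing the distance-vs.-radius constant.

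For the lower bound, which is the main obstacle, the plan is to weight the law of $\eta$ by $M_t/M_0$. By Girsanov's theorem this produces an SLE$_\kappa(\kappa - 8)$ with force point at $1$, under which $W_t$ gains the drift $(8-\kappa)/Y_t\,dt$ and $Y_t$ evolves as a Bessel-type process of dimension $3 - 12/\kappa < 3/2$, so $\eta$ hits $1$ almost surely under the new measure. The matching lower bound is then extracted via a separation-type statement: conditionally on $\tau_\eps$, under the weighted law, $Y_{\tau_\eps}/g_{\tau_\eps}'(1)$ is of order $\eps$ (rather than much smaller) with uniformly positive probability; the Radon--Nikodym density $M_{\tau_\eps} \asymp \eps^{-\bxi_2(\kappa)}$ then transfers this back to a lower bound of order $\eps^{\bxi_2(\kappa)}$ on the original event.

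The main obstacle is precisely this one-dimensional separation estimate. An essentially equivalent route is a multi-scale chaining at dyadic scales $2^{-k}$: using the strong Markov property of SLE together with Koebe to reset the geometry at each step, one proves inductively that conditionally on $\tau_{2^{-k}} < \infty$ the hull is ``well separated'' at scale $2^{-k}$ (meaning $Y/g' \gtrsim 2^{-k}$) with uniformly positive probability, and then continues to the next scale. In stark contrast to the Brownian-loop-soup regime that governs the rest of the paper, the problem collapses here to a one-dimensional Bessel analysis (since $Y_t$ is essentially the sole degree of freedom at a boundary point), which makes the required separation estimate considerably simpler than the quality-based framework developed in Section~\ref{sec:separation}.
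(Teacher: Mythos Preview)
The paper does not give its own proof of this statement; it is quoted as a preliminary result from \cite{MR2574734}, with the remark that it follows from Lemmas~2.1--2.2 and Proposition~2.3 there. Your proposal is a correct outline of precisely that classical argument: the local martingale $M_t=(g_t'(1)/Y_t)^{\bxi_2(\kappa)}$ (your It\^o computation $a=b+\kappa b(b-1)/4$, $a+b=0$, $a=(8-\kappa)/\kappa$ is right), the Koebe comparison $Y_t/g_t'(1)\asymp\dist(1,K_t)$, optional stopping of the supermartingale for the upper bound, and the Girsanov tilt producing SLE$_\kappa(\kappa-8)$ with force point at $1$ (under which the curve a.s.\ hits $1$, as the Bessel dimension $3-12/\kappa<2$) together with a one-dimensional separation estimate for the lower bound. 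So there is nothing to contrast: you have reconstructed the cited proof, with the lower-bound separation step left, as you yourself flag, at the level of a plan rather than a fully executed argument.
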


We now recall two theorems from \cite{MR4082185, MR4235483} on the Green's function of 2-SLE. The original statements contain more properties, but we only state two simplified versions which are enough for our purpose.
In the two theorems below, let $D$ be a simply connected domain with four distinct boundary points (prime ends) $a_1, b_1, a_2, b_2$ such that $a_1$ and $a_2$ together separate $b_1$ from $b_2$ on $\partial D$. 
Let $(\eta_1, \eta_2)$ be a $2$-SLE$_\kappa$ in $D$ with link pattern $(a_1, b_1; a_2, b_2)$.
More precisely, this means that $(\eta_1, \eta_2)$ is distributed as a pair of independent SLE's in $D$ ($\eta_i$ between $a_i$ and $b_i$, for $i=1,2$) conditioned not to intersect each other.

\begin{theorem}[Theorem 1.1, \cite{MR4082185}]\label{thm:zhan}
Let $\kappa\in(0,8)$. Let $z_0\in D$. 
There exists $G(\kappa; D; a_1, b_1; a_2, b_2; z_0) \in (0,\infty) $  such that as $\eps\to 0$,
\begin{align*}
\Pb\left[\dist(z_0, \eta_i)< \eps, i=1,2 \right] = G(\kappa; D; a_1, b_1; a_2, b_2; z_0) \eps^{\xi_4} (1+o(1)).
\end{align*}
\end{theorem}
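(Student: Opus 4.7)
\emph{Proof plan.} The strategy follows Zhan's approach in \cite{MR4082185}: produce a positive local martingale built from a conformal-covariance ansatz of scaling weight $\xi_4$, and apply optional stopping when the conformal radius at $z_0$ first drops below $\eps$. The $\xi_4$ exponent then comes out of the covariance, and the constant $G$ is identified as a normalized ratio of a partition-function-derived observable.

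By conformal invariance of $2$-SLE and Koebe distortion applied to $B_\eps(z_0)$, we may reduce to a fixed reference domain (e.g.\ $D = \Hb$ with prescribed marked points and $z_0 = i$); the change of domain produces the factor $|\varphi'(z_0)|^{\xi_4}$ in $G$. Parametrize $(\eta_1,\eta_2)$ as a commuting Loewner chain in the sense of Dub\'edat / Miller--Sheffield: at joint growth time $t$, let $g_t$ be the uniformizing map of $D \setminus (\eta_1^t \cup \eta_2^t)$ onto $\Hb$, and record the state
\[
X_t = (U^1_t, V^1_t, U^2_t, V^2_t, Z_t, R_t),
\]
where $U^i_t, V^i_t$ are the images of the two driving tips and the two opposite endpoints, $Z_t := g_t(z_0)$, and $R_t := |g'_t(z_0)|^{-1}$ is the conformal radius at $z_0$ in the remaining domain. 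The boundary coordinates form a diffusion whose drifts are the logarithmic derivatives of the $2$-SLE partition function $\psi(U^1,V^1,U^2,V^2)$.

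The key step is to construct a positive local martingale of the form
\[
M_t = R_t^{\xi_4}\,\Phi\bigl(U^1_t, V^1_t, U^2_t, V^2_t, Z_t\bigr),
\]
with $\Phi$ solving the degenerate elliptic PDE obtained by requiring the It\^o drift of $M_t$ under the commuting-SLE diffusion to vanish (morally, $\Phi$ is a fusion of $\psi$ with a bulk operator of weight $\xi_4/2$). Setting $\tau_\eps := \inf\{t : R_t \le \eps\}$, Koebe's $1/4$ theorem ensures that on $\{\tau_\eps < \infty\}$ the two curves together trap $z_0$ at scale $\eps$, and after further localization this event agrees (up to multiplicative constants, later absorbed in $G$) with $\{\dist(z_0,\eta_i)<\eps,\, i=1,2\}$. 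Optional stopping applied to $M_{t\wedge\tau_\eps}$ yields
\[
M_0 = \eps^{\xi_4}\,\Eb\bigl[\Phi(X_{\tau_\eps})\,\mathbf{1}_{\tau_\eps < \infty}\bigr].
\]
If $\Phi(X_{\tau_\eps})$ converges, conditionally on $\{\tau_\eps<\infty\}$, to a deterministic positive constant $c$ as $\eps \to 0$, one recovers $\Pb[\dist(z_0,\eta_i)<\eps,\, i=1,2] = (M_0/c)\,\eps^{\xi_4}(1+o(1))$, which is the desired asymptotic with $G = M_0/c > 0$.

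The main obstacle is the construction of the observable $\Phi$: one must solve a coupled system of BPZ-type null-vector equations (one per boundary marked point) with boundary conditions dictated by the link pattern $(a_1,b_1;a_2,b_2)$ and a prescribed singular behavior as $Z_t\to U^i_t$, and then verify strict positivity. A second, essential difficulty is the convergence statement for $\Phi(X_{\tau_\eps})$: by conformal invariance only the rescaled ``angle'' at which the two curves approach $z_0$ can contribute nontrivially at the swallowing time, and one has to show that this angle concentrates on the rare event $\{\tau_\eps<\infty\}$, via a separation/localization argument for the conditioned commuting-SLE diffusion that is analogous in spirit to (but technically distinct from) the CLE separation lemma (Theorem~\ref{thm:sep_cle}) developed in the present paper.
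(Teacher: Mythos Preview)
The paper does not prove this statement: Theorem~\ref{thm:zhan} is quoted verbatim from Zhan \cite{MR4082185} as a preliminary input, and the present paper uses it as a black box (see Section~\ref{sec:prelim}, where it is introduced with ``We now recall two theorems from \cite{MR4082185, MR4235483}''). There is therefore no ``paper's own proof'' to compare your proposal against.

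As for your sketch of Zhan's argument itself: the overall architecture you describe---construct a covariant local martingale $M_t = R_t^{\xi_4}\Phi(\cdot)$ under the commuting two-SLE Loewner chain, stop when the conformal radius drops to $\eps$, and read off the exponent---is indeed the strategy of \cite{MR4082185}. However, you explicitly leave the two genuinely hard steps unresolved. First, the existence and positivity of the observable $\Phi$ solving the coupled null-vector system is not something one can wave through; in \cite{MR4082185} this is handled by an explicit construction via hypergeometric-type functions and careful analysis of their positivity and boundary behavior, not by abstract PDE theory. Second, and more seriously, the passage from $\{\tau_\eps<\infty\}$ (a conformal-radius event for the \emph{union} $\eta_1\cup\eta_2$) to $\{\dist(z_0,\eta_i)<\eps,\ i=1,2\}$ (each curve separately coming close) is not ``up to multiplicative constants later absorbed in $G$'': these events differ by whether \emph{both} curves or only one approach $z_0$, and reconciling them with a sharp $(1+o(1))$ constant is exactly where the two-curve Green's function analysis does real work. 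Your appeal to a ``separation/localization argument analogous to Theorem~\ref{thm:sep_cle}'' is misplaced here, since Theorem~\ref{thm:sep_cle} concerns CLE clusters in a BLS and yields only up-to-constants estimates, not the sharp asymptotic with a limit constant that Theorem~\ref{thm:zhan} asserts. As written, your proposal is a plausible outline but not a proof.
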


\begin{theorem}[Theorem 1.1, \cite{MR4235483}]\label{thm:zhanb}
Let $\kappa\in(0,4]$. Let $z_0\in \partial D\setminus\{a_1, b_1, a_2, b_2\}$ such that $\partial D$ is analytic near $z_0$. 
There exists  $\wt G(\kappa; D; a_1, b_1; a_2, b_2)\in (0,\infty)$ such that as $\eps\to 0$,
\begin{align*}
\Pb\left[\dist(z_0, \eta_i)< \eps, i=1,2 \right] = \wt G(\kappa; D; a_1, b_1; a_2, b_2; z_0) \eps^{\bxi_4} (1+o(1)).
\end{align*}
\end{theorem}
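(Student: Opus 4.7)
The plan is to adapt Lawler's method for constructing SLE Green's functions to the two-curve setting, in parallel with the interior case of Theorem~\ref{thm:zhan}. First, by the conformal invariance of $2$-SLE$_\kappa$ and the analyticity of $\partial D$ near $z_0$, I would reduce to the canonical situation $D = \Hb$, $z_0 = 0$: the conformal map $\phi : D \to \Hb$ with $\phi(z_0) = 0$ extends analytically across $z_0$ by Schwarz reflection, so $B_\eps(z_0) \cap D$ maps to a region comparable to $B_{|\phi'(z_0)|\eps}(0) \cap \Hb$ with relative error $O(\eps)$. Any constant produced in the canonical setting then transfers to $\wt G$ with an explicit $|\phi'(z_0)|^{\bxi_4}$ factor.

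Next, I would describe the $2$-SLE$_\kappa$ via its partition function. For $\kappa \in (0,4]$, the $2$-SLE$_\kappa$ in $\Hb$ admits a Loewner representation in which one curve $\eta_1$ is driven by a Brownian motion tilted, via an $h$-transform by a hypergeometric-type partition function $Z_t := Z(\Hb; W_t, g_t(b_1); g_t(a_2), g_t(b_2))$, against an independent chordal SLE$_\kappa$ so as to enforce the non-intersection constraint. A suitably chosen power of $|g_t'(0)|$ multiplied by $Z_t$, with the power $\bxi_4$ dictated by requiring the It\^o drift to vanish, yields a local martingale $M_t$ that is essentially $\eps^{-\bxi_4}$ times the conditional probability that both curves come within $\eps$ of $0$. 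To extract the sharp asymptotic, I would stop $\eta_1$ at the first capacity-time $\tau_\eps$ at which it enters $B_\eps(0)$ and, on $\{\tau_\eps < \infty\}$, apply a one-curve boundary Green's function for $\eta_2$ in the slit domain $\Hb \setminus \eta_1([0,\tau_\eps])$ (an extension of Proposition~\ref{prop:schramm} to domains with analytic boundary near the target point, obtained by conformal mapping back to $\Hb$). The Girsanov change of measure induced by $M_t$ then shows that the tilted law of $\eta_1$ converges, as $\eps \to 0$, to the law of an SLE$_\kappa$ conditioned (in the boundary sense) to pass through $0$, and identifies $\wt G$ as an explicit integral against this limiting law.

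The main obstacle is making this convergence step rigorous. One must prove (i) tightness and uniqueness of the subsequential limit of the tilted law of $\eta_1$ near $0$, and (ii) a separation/regularity estimate ensuring that, conditionally on $\tau_\eps < \infty$, the residual domain $\Hb \setminus \eta_1([0,\tau_\eps])$ near $0$ has enough boundary regularity that the one-curve boundary Green's function applies with a constant converging to a deterministic limit as $\eps \to 0$. This is a simpler, but philosophically parallel, analogue of the separation result of Theorem~\ref{thm:sep_cle} of the present paper. Positivity $\wt G > 0$ is then obtained by exhibiting an explicit favourable boundary configuration, while finiteness follows from the martingale upper bound.
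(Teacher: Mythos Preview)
The present paper does \emph{not} prove Theorem~\ref{thm:zhanb}: it is quoted in Section~\ref{sec:prelim} as a preliminary result from Zhan~\cite{MR4235483} and used as a black box in Section~\ref{sec:bdy_cle}. There is therefore no ``paper's own proof'' to compare your proposal against.

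As for the proposal itself, it is a plausible high-level sketch in the spirit of Lawler's one-curve Green's function arguments, but it diverges substantially from Zhan's actual method and has a real gap. Zhan does not grow $\eta_1$ first and then apply a one-curve boundary estimate to $\eta_2$ in the slit domain; he grows the two curves \emph{simultaneously} using the commuting two-curve Loewner framework, reduces to a multi-dimensional diffusion encoding the relative positions of the driving points and the image of $z_0$, and obtains the constant $\wt G$ from the leading eigenfunction/invariant density of that diffusion. Your sequential strategy runs into the difficulty you flag in~(ii) but understate: once $\eta_1$ has entered $B_\eps(0)$, the boundary of $\Hb\setminus\eta_1([0,\tau_\eps])$ near $0$ is a fractal SLE trace, not analytic, so Proposition~\ref{prop:schramm} (or any analytic-boundary one-curve Green's function) does not apply there with a constant that converges as $\eps\to 0$. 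Turning that into a rigorous limit requires exactly the kind of two-curve coupling and uniform separation/regularity analysis that Zhan's simultaneous approach is built to provide; it is not a minor technicality one can patch by ``enough boundary regularity''. Also, your claimed local martingale ``$|g_t'(0)|^{\bxi_4}$ times $Z_t$'' is not the right object: the correct boundary martingale involves additional factors tracking the images of the other marked points and a specific hypergeometric function, and getting the exponent to come out as $\bxi_4$ rather than $\bxi_2$ requires the two-curve interaction to be encoded in the observable, not just in the tilt $Z_t$.
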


The following lemma states that an SLE$_\kappa$ has a positive probability to stay in a given tube. 
\begin{lemma}\label{lem:sle_tube}
Fix $\kappa\in(0,4]$. Suppose that $\eta$ is an SLE$_\kappa$ in $\Db$ from $a\in \partial \Db\setminus \{1\}$ to $1$. Let $\gamma$ be a simple curve starting from $a$, terminating at $1$, and otherwise not hitting $\partial \Db$. Let $A_\eps$ be the $\eps$ neighborhood of $\gamma$. We have $\Pb[\eta\subset A_\eps]>0$.
\end{lemma}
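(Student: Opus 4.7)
The plan is to combine the Brownian support theorem with continuity of the Loewner transform at smooth drivers, iterated via the domain Markov property of SLE$_\kappa$. By a routine smoothing, we may replace $\gamma$ by a smooth simple curve contained in a sub-tube of $A_\eps$, and thus assume $\gamma$ itself is smooth.

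We cover $\gamma$ by a finite chain of small disks $B^0,\dots,B^N$ centered at successive points $a=z_0,z_1,\dots,z_N=1$ of $\gamma$: each $B^i$ has diameter much less than $\eps$ and is contained in $A_\eps$, consecutive $B^{i-1},B^i$ overlap along $\gamma$, and $B^0,B^N$ are half-disks meeting $\partial\Db$. Let $V_i$ be a small neighborhood of the subarc of $\gamma$ from $z_{i-1}$ to $z_i$, contained in $A_\eps$, and let $\tau_i$ denote the first time $\eta$ hits $B^i$. By the domain Markov property of SLE$_\kappa$, conditionally on $\eta[0,\tau_{i-1}]\subset\bigcup_{j\le i-1}V_j$, the curve $\eta|_{[\tau_{i-1},\infty)}$ is a chordal SLE$_\kappa$ in the complement $\Db\setminus\eta[0,\tau_{i-1}]$ from $\eta(\tau_{i-1})$ to $1$, so it suffices to lower-bound, uniformly over such admissible pasts, the conditional probability that $\eta([\tau_{i-1},\tau_i])\subset V_i$.

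For an interior target ($1\le i\le N-1$), we conformally map the complement onto $\Hb$ sending $\eta(\tau_{i-1})$ to $0$ and the final point $1$ to $\infty$. The image of the relevant subarc of $\gamma$ is a smooth simple curve $\tilde\sigma$ in $\overline\Hb$ from $0$ to some interior point bounded away from $\infty$, with finite half-plane capacity $T$ and smooth Loewner driver $W^{\tilde\sigma}:[0,T]\to\Rb$. The Brownian support theorem yields, for every $\delta>0$,
\[
\Pb\!\left[\sup_{t\in[0,T]}\,|\sqrt\kappa\,B_t-W^{\tilde\sigma}(t)|<\delta\right]>0.
\]
Classical stability estimates on the Loewner ODE at the smooth driver $W^{\tilde\sigma}$ then imply that the hull driven by any $W$ close to $W^{\tilde\sigma}$ in uniform norm lies in a prescribed Hausdorff neighborhood of $\tilde\sigma$; together with the fact that for $\kappa\le 4$ the SLE hull coincides with the simple-curve trace, this shows that for $\delta$ small enough the SLE trace on $[0,T]$ is contained in any chosen neighborhood of $\tilde\sigma$. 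Pulling this back to $\Db$ using Koebe's $1/4$-theorem (Lemma~\ref{lem:Koebe}) to control the distortion of the conformal map yields a positive lower bound on the step probability, uniformly over admissible pasts.

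The terminal step $i=N$, whose target is the boundary point $1$, is reduced to the interior case by the reversibility of chordal SLE$_\kappa$ for $\kappa\in(0,4]$ due to Zhan: the time-reversal of the conditional SLE from $\eta(\tau_{N-1})$ to $1$ has the law of an SLE from $1$ to $\eta(\tau_{N-1})$, and after conformally mapping $1$ to $0\in\partial\Hb$ we are back in the interior-target one-step setup above. The main technical obstacle is precisely the continuity-of-Loewner input in the one-step estimate, where the uniformity over admissible past configurations must be extracted; once this is combined with the support theorem, the domain Markov property, and reversibility, the finite iteration produces $\Pb[\eta\subset A_\eps]>0$.
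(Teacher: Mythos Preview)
Your approach is genuinely different from the paper's. The paper gives a two-line argument via the restriction property of SLE$_\kappa$ for $\kappa\in(0,4]$: writing $A=A_\eps\cap\Db$, one has
\[
\Pb[\eta\subset A]=\int (f_A'(a)f_A'(1))^{(6-\kappa)/(2\kappa)}\exp\big(-\alpha(\kappa)\,\Lambda_\Db(\eta,\Db\setminus A)\big)\,d\Pb_A(\eta),
\]
where $\Pb_A$ is the law of SLE$_\kappa$ in $A$; since $\eta$ under $\Pb_A$ a.s.\ avoids $\partial A\setminus\{a,1\}$, the loop mass $\Lambda_\Db(\eta,\Db\setminus A)$ is a.s.\ finite and the integrand is strictly positive. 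This requires no Loewner-continuity input, no iteration, and no reversibility, and it works uniformly for all $\kappa\in(0,4]$. Your route through the Brownian support theorem and stability of the Loewner map at smooth drivers is a legitimate alternative and is closer in spirit to how one proves tube estimates for diffusions, but it leans on substantially heavier machinery (continuity of the driver-to-trace map in a strong topology, Zhan's reversibility) and produces a longer proof.

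There is, however, a genuine gap in your terminal step. When you reverse the last piece and map $1\mapsto 0$, the target $\eta(\tau_{N-1})$ is a prime end of $D'=\Db\setminus\eta[0,\tau_{N-1}]$, hence is sent to $\infty$, not to an interior point of $\Hb$. What \emph{is} true is that the guide arc from $1$ to the interior point $z_{N-1}$ maps to a curve from $0$ to some interior point $p$ with finite capacity $T$; your support/continuity argument then controls the reversed SLE only on $[0,T]$, leaving the segment from time $T$ to $\infty$ unaccounted for. So reversibility does not in fact reduce the terminal step to the interior-target setup. The fix is to observe that, because $V_N$ contains a full $D'$-neighborhood of the prime end $\eta(\tau_{N-1})$, its image $\tilde V_N$ contains $\Hb\setminus B_R$ for some $R$, and then to invoke transience of SLE$_\kappa$ (with positive probability $\eta[T,\infty)\subset\Hb\setminus B_R$) to close the argument; but this extra input is missing from your write-up. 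A secondary point: the ``classical stability estimates'' you cite (Hausdorff continuity of the Loewner hull at smooth drivers) are correct but not elementary, and a precise reference (e.g.\ Lind--Marshall--Rohde or Johansson~Viklund--Rohde--Wong) would be needed in a full proof.
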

\begin{proof}
Note that $A:=A_\eps\cap \Db$ is a simply connected domain. Let $f$ be the conformal map from $A$ onto $\Db$ that leaves $a, 1$ fixed. Let $\Pb_A$ be the probability measure of an SLE$_\kappa$ in $A$ from $a$ to $1$.
Let $\alpha(\kappa)$ be given by \eqref{eq:c_kappa}. 
Let $\Lambda_{\Db} (\eta, \Db\setminus A)$ be the mass of Brownian loops in $\Db$ that intersect both $\eta$ and $\Db\setminus A$.
By the restriction property of SLE$_\kappa$, we have
\begin{align}\label{eq:res}
\Pb[\eta\subset A]=\int \one_{\eta \subset A}  (f_A'(a) f_A'(1))^{(6-\kappa)/(2\kappa)} \exp\left(- \alpha(\kappa)\Lambda_{\Db} (\eta, \Db\setminus A)  \right) d\Pb_{A}(\eta).
\end{align}
Note that for $\kappa\in(0,4]$, SLE$_\kappa$ a.s.\ does not touch the boundary of the domain (except at its endpoints), hence $\Lambda_{\Db} (\eta, \Db\setminus A)<\infty$ for $\Pb_A$-a.e.\ $\eta$.
Hence the integrand in \eqref{eq:res} is positive for $\Pb_A$-a.e.\ $\eta$, leading to a positive integral.
\end{proof}

\section{Two-arm events for CLE}\label{sec:2arm}
As a warm-up, we first establish up-to-constants estimates for two-arm events in CLE$_\kappa$. This case, involving only one loop, is much easier. Indeed, in the four-arm case, we will have to handle interactions between different crossing loops in the CLE. We use this simple case to illustrate some ideas which will be used again later on.

\begin{definition}[Two-arm events for CLE]
For $\kappa\in(8/3,4]$, let $\Gamma^+$ (resp.\ $\Gamma$) be a CLE$_\kappa$ in $\Hb$ (resp.\ $\Db$). 
We define the \emph{boundary two-arm event} $\ba_2 (\eps, r)$ (resp.\ \emph{interior two-arm event} $\Ac_2(\eps, r)$)  to be the event that there is at least one loop $\gamma$ in $\Gamma^+$ (resp.\ $\Gamma$) that intersects both $\partial B_\eps$ and $\partial B_r$. 
\end{definition}
The goal of this section is to prove the following proposition.
\begin{proposition}\label{prop:two-arm}
For $\kappa\in(8/3,4]$ and $r\in(0,1)$, we have the following up-to-constants estimates for two-arm events in CLE$_\kappa$, as $\eps\to 0$,
\begin{align*}
\Pb(\ba_2(\eps, r)) \asymp \eps^{\xi^+_{2}(\kappa)},\quad \Pb(\Ac_2(\eps, r)) \asymp \eps^{\xi_{2}(\kappa)},
\end{align*}
where $\bxi_2(\kappa)$ and $\xi_2(\kappa)$ are given by \eqref{eq:arm_exp}, and the implicit constants depend on $r, \kappa$.
\end{proposition}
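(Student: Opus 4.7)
The plan is to reduce the CLE two-arm estimates to the corresponding SLE$_\kappa$ two-arm estimates (Propositions~\ref{prop:beffara} and~\ref{prop:schramm}) via a Markovian exploration of CLE in the spirit of \cite{MR2979861}, for which the two-arm setting serves as a clean prototype of what will later be needed in the four-arm case.

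\textbf{Lower bounds.} For the interior case, my plan is to consider the outermost CLE loop $\gamma^*$ in $\Gamma$ surrounding $0$. There is a uniformly positive probability that the conformal radius (seen from $0$) of the domain $D^*$ enclosed by $\gamma^*$ lies in $[r/2,1]$ and that $D^* \supset B_{r/4}$. Conditionally on such a configuration, Sheffield--Werner's construction shows that the curve $\gamma^*$ has, after conformally mapping $D^*$ onto $\Db$ with $0$ sent to $0$, a law which is absolutely continuous with respect to the law of a radial SLE$_\kappa$ targeted at the image of $0$. Proposition~\ref{prop:beffara} then gives $\Pb(\gamma^* \cap B_\eps \neq \emptyset) \gtrsim \eps^{\xi_2(\kappa)}$, whence the desired lower bound. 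The boundary case is analogous, using a CLE loop close to the boundary point $0 \in \partial \Hb$ together with Proposition~\ref{prop:schramm}; Lemma~\ref{lem:sle_tube} would be useful to force the exploration to stay in a prescribed tube during the construction.

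\textbf{Upper bounds.} For the interior case, I would explore iteratively the nested outermost CLE loops $\gamma_0^* \supset \gamma_1^* \supset \cdots$ surrounding $0$. By \cite{MR2979861}, after conformally mapping successively onto $\Db$, the conformal radii $\crad(D_n)$ of the enclosed domains $D_n$ form a multiplicative Markov chain with stationary positive drift in $-\log \crad$, so $\crad(D_n)$ decays geometrically. On $\Ac_2(\eps,r)$, either (i) some $\gamma_n^*$ itself meets $\partial B_\eps$, or (ii) a loop $\gamma \in \Gamma$ not surrounding $0$ crosses $A_{\eps,r}$. In case (i), the key step is to bound, for each $n$ such that $\crad(D_n)\asymp r$, the conditional probability that $\gamma_n^*$ enters $B_\eps$: this is done by conformally mapping $D_n$ onto $\Db$ and using the SLE$_\kappa$-type description of $\gamma_n^*$ together with Proposition~\ref{prop:beffara}, yielding $\lesssim \eps^{\xi_2(\kappa)}$; the geometric decay of $\crad(D_n)$ makes the sum over $n$ telescope to the same order. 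In case (ii), such a loop $\gamma$ lies inside some $D_{n_0}$, and by the nested Markov property of CLE$_\kappa$ its law inside $D_{n_0}$ is an independent CLE$_\kappa(D_{n_0})$; conformal invariance then reduces the occurrence of a non-surrounding crossing loop to an interior two-arm event at a larger scale, closing the bound self-consistently. The boundary upper bound proceeds along the same lines, replacing the radial exploration by a chordal SLE$_\kappa(\kappa-6)$ exploration of $\Gamma^+$ starting from $0 \in \partial\Hb$, and using Proposition~\ref{prop:schramm} instead of~\ref{prop:beffara}.

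\textbf{Main obstacle.} The main subtlety in the whole argument is case (ii) of the upper bound, namely controlling loops that cross $A_{\eps,r}$ but are not naturally enumerated by the exploration around $0$ (resp.\ along $\partial\Hb$). My plan is to dominate this contribution by the ``surrounding'' case (i) through the nested CLE Markov property together with conformal invariance, so that a non-surrounding loop at scale $\eps$ inside some enclosed domain $D_{n_0}$ ultimately costs at least as much as an SLE$_\kappa$ two-arm event at that scale; everything then closes via Proposition~\ref{prop:beffara} or~\ref{prop:schramm}. This is the two-arm analogue of the separation-type difficulty that will require the full machinery of Section~\ref{sec:separation} in the four-arm case.
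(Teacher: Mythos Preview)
Your strategy differs substantially from the paper's and has two genuine gaps.

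\textbf{Gap in the lower bound.} You assert that the outermost loop $\gamma^*$ surrounding $0$, after conformally mapping $D^*$ to $\Db$, is ``absolutely continuous with respect to the law of a radial SLE$_\kappa$''. This is not a statement established in the paper (nor is it immediate from \cite{MR2979861}), and even if it were, Proposition~\ref{prop:beffara} concerns \emph{chordal} SLE$_\kappa$, so you would still need to transfer the estimate. The paper sidesteps this entirely: it explores $\Gamma$ along a deterministic curve $\ell$ (a radial segment followed by an arc of $\partial B_r$), tracing each encountered loop counterclockwise, and stops the first time a loop reaches $\partial B_{r/2}$. Conditionally on the exploration up to that stopping time, the \emph{undiscovered remainder} $\gamma\setminus\wh\gamma$ of that loop is, after a conformal map $f$ to $\Db$ (or $\Hb$), exactly a chordal SLE$_\kappa$ between two marked boundary points. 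Propositions~\ref{prop:beffara} and~\ref{prop:schramm} then apply directly to give both the lower and upper bounds.

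\textbf{Gap in the upper bound.} Your case (ii) --- a crossing loop not surrounding $0$ --- is handled by ``closing the bound self-consistently'', which is circular as stated: you are trying to prove the two-arm upper bound, and reducing case (ii) to ``an interior two-arm event at a larger scale'' presupposes exactly that bound. Moreover, such a loop need not lie inside any $D_{n_0}$: it may simply be another outermost loop in $\Db$, disjoint from $\gamma_0^*$. The paper avoids this case analysis altogether via Lemma~\ref{lem:2arm_good}, proved using the Brownian loop-soup representation and the BK inequality: it shows $\Pb(\Ac_2(\eps,r))\asymp \Pb(\Ac_2(\eps,r)\cap\Gc(\eps,r))$, where $\Gc(\eps,r)$ is the event that all loops other than the crossing one are small (diameter $<r/10$). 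On $\Ac_2(\eps,r)\cap\Gc(\eps,r)$, the exploration above is guaranteed to discover the unique crossing loop, and the upper bound follows from the same chordal SLE estimate.

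In short, the paper's exploration produces a genuine chordal SLE$_\kappa$ as the conditional law of the remainder of the crossing loop, and the BLS/BK argument of Lemma~\ref{lem:2arm_good} replaces your nested-loop induction. Both ingredients are cleaner than what you propose and avoid the unjustified absolute-continuity claim and the circularity in case (ii).
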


We will perform a Markovian exploration of CLE$_\kappa$ as in \cite{MR2979861}, in order to relate CLE$_\kappa$ loops to SLE$_\kappa$ curves, so that we can apply  Propositions~\ref{prop:beffara} and~\ref{prop:schramm}. For this purpose, we need to restrict ourselves to some good events. 
On the event $\ba_2 (\eps, r)$ (resp.\ $\Ac_2(\eps, r)$), there is a loop $\gamma$ in $\Gamma^+$ (resp.\ $\Gamma$) that intersects $\partial B_\eps$ and $\partial B_r$.  Let $\Gc^+(\eps, r)$ (resp.\ $\Gc(\eps, r)$) be the event that all the loops in $\Gamma^+\setminus\{\gamma\}$ (resp.\ $\Gamma\setminus\{\gamma\}$) which intersect $B_{2r}$ have diameter less than $r/10$.

\begin{lemma}\label{lem:2arm_good}
For $\kappa\in(8/3,4]$ and $r\in(0,1)$, we have, as $\eps\to 0$,
\begin{align*}
\Pb(\ba_2(\eps, r)) \asymp \Pb(\ba_2(\eps, r) \cap \Gc^+(\eps, r)), \quad \Pb(\Ac_2(\eps, r)) \asymp \Pb(\Ac_2(\eps, r) \cap \Gc(\eps, r)).
\end{align*}
\end{lemma}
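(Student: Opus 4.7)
The upper bounds $\Pb(\ba_2(\eps,r) \cap \Gc^+(\eps,r)) \leq \Pb(\ba_2(\eps,r))$ and $\Pb(\Ac_2(\eps,r) \cap \Gc(\eps,r)) \leq \Pb(\Ac_2(\eps,r))$ are immediate, so only the opposite inequalities require proof. I focus on the boundary case; the interior case is handled analogously with $\Hb$ replaced by $\Db$. The plan is, on $\ba_2(\eps,r)$, to first expose a crossing loop $\gamma \in \Gamma^+$ through a Markovian exploration of CLE, then bound the conditional probability of $\Gc^+(\eps,r)$ from below uniformly in $\eps$.

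Using a suitable target-invariant branching exploration of $\Gamma^+$ in the sense of Sheffield--Werner \cite{MR2979861} (for instance, targeted at a finite family of interior points of $B_\eps \cap \Hb$ chosen so as to capture every possible crossing loop), one exposes a measurable loop $\gamma$ crossing the annulus $A_{\eps,r}$. By the Markov property of CLE, conditionally on the exposed data, the loops in each connected component of $\Hb \setminus \gamma$ form independent CLE$_\kappa$'s in those domains.

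It then remains to establish the following uniform bound: for any simply connected sub-domain $U \subseteq \Hb$, the probability that a CLE$_\kappa$ in $U$ has a loop of diameter $\geq r/10$ intersecting $B_{2r} \cap U$ is at most $1 - c(r, \kappa)$ for some $c > 0$. I would prove this via the Brownian loop-soup (BLS) representation of CLE$_\kappa$ \cite{MR2979861}: the CLE$_\kappa$ in $U$ is built from the outer boundaries of the outermost BLS clusters in $U$, and a CLE loop of diameter $\geq r/10$ intersecting $B_{2r}$ corresponds to a BLS cluster of the same diameter intersecting $B_{2r}$. The event of having no such cluster is a local event near $B_{2r}$ governed by the Poissonian structure of the BLS: by requiring, e.g., the absence of BLS loops of diameter $\geq r/100$ in a neighborhood of $B_{2r}$ together with a control on the chaining of the remaining small loops, one obtains a positive lower bound depending only on $r$ and $\kappa$. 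Combining this with the CLE Markov property over the (at most two) components of $\Hb \setminus \gamma$ yields $\Pb(\ba_2(\eps,r) \cap \Gc^+(\eps,r)) \geq c \Pb(\ba_2(\eps,r))$, as desired.

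The main obstacle is precisely this uniform bound, since the components of $\Hb \setminus \gamma$ can have arbitrary shapes and $\gamma$ itself may extend far beyond $B_{2r}$. Working through the BLS rather than with CLE directly is what circumvents the difficulty: the restriction of the BLS in $\Hb$ to loops contained in $U$ is itself a BLS in $U$ of the same intensity, so the local event near $B_{2r}$ admits a domain-independent Poissonian description which is insensitive to the random geometry of $U$.
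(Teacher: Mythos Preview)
Your approach differs from the paper's and contains a gap. The paper works entirely in the Brownian loop soup and avoids any CLE exploration: it first uses the BK inequality to pass from $\Ac_2(\eps,r)$ to the sub-event $\bar\Ac_2(\eps,r)$ that there is \emph{exactly one} crossing cluster $\Cc$ (the complement $\Ac_2 \setminus \bar\Ac_2$ implies the disjoint occurrence $\Ac_2 \square \Ac_2$, so its probability is at most $\Pb(\Ac_2)^2 \le c\,\Pb(\Ac_2)$ with $c<1$). Conditionally on $\Cc$, the loops in the complement of its filling form a loop soup there conditioned to have no crossing cluster; this is stochastically dominated by the full loop soup $\Lc_\Db$, which has no cluster of diameter $\ge r/10$ with positive probability depending only on $r,\kappa$.

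The issue with your route is the step where you run a Sheffield--Werner exploration until a crossing loop $\gamma$ has been fully traced and then invoke the CLE Markov property. First, the conditional law of the remaining loops is CLE in the \emph{unexplored} region, not in $\Hb \setminus \gamma$; that region will typically have infinitely many components rather than ``at most two'' (your BLS monotonicity argument can in fact absorb this point, by coupling the independent loop soups in all components simultaneously inside one loop soup in $\Hb$). The more serious problem is that before reaching $\gamma$ the exploration will in general have already traced \emph{other} loops of $\Gamma^+$ that intersect $B_{2r}$, and those loops are now part of the conditioning data: you cannot force them to have diameter $< r/10$ after the fact, so your conditional lower bound on $\Gc^+(\eps,r)$ does not follow. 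The natural repair is precisely the paper's BK step --- reduce to a unique crossing cluster first, then condition on it directly in the loop soup, where no extraneous loops are revealed.
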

The proof relies on the Brownian loop soup (BLS) representation of the CLE. We view the events for CLE$_\kappa$ equivalently as events for the BLS with intensity $\alpha$ in the same domain, coupled with CLE$_\kappa$, where $\alpha$ and $\kappa$ are related by \eqref{eq:c_kappa}.
Throughout the proof, we denote by $\Lc_D$ the Brownian loop soup with intensity $\alpha$ in the domain $D$.
\begin{proof}
	We only give a proof for $\Ac_2(\eps, r)$, since the proof for $\ba_2(\eps, r)$ is very similar. Below, we view $\Ac_2(\eps, r)$ as an event for $\Lc_{\Db}$, i.e., there is at least one outermost cluster of $\Lc_{\Db}$ whose boundary intersects both $\partial B_\eps$ and $\partial B_r$, called \emph{crossing cluster} below.
	Let $\bar\Ac_2(\eps, r)\subseteq \Ac_2(\eps, r)$ be the event that there is only one such crossing cluster. 
	 By the BK inequality (see e.g. \cite{MR1385351} and the references therein), we have 
	\begin{equation}\label{eq:bk1}
		\Pb( \bar\Ac_2(\eps, r) ) \gtrsim \Pb( \Ac_2(\eps, r) ).
	\end{equation}
	More concretely, $\Ac_2(\eps, r)\setminus \bar\Ac_2(\eps, r)$ implies $\Ac_2(\eps, r)\square \Ac_2(\eps, r)$, the disjoint occurrence of $\Ac_2(\eps, r)$ and $\Ac_2(\eps, r)$. Note that there is some constant $c<1$ such that $\Pb( \Ac_2(\eps, r) )\le c$. Hence, by the BK inequality, 
	\[
	\Pb( \Ac_2(\eps, r)\setminus \bar\Ac_2(\eps, r) ) \le \Pb( \Ac_2(\eps, r) )^2 \le c\, \Pb( \Ac_2(\eps, r) ),
	\]
	which implies \eqref{eq:bk1} immediately. 
	
	On the event $\bar\Ac_2(\eps, r)$, there is only one crossing cluster, denoted by $\Cc$. Given  $\Cc$, the loops contained in the complement of the filling of $\Cc$ in $\Db$ are distributed as a Brownian loop soup in that remaining domain, conditioned to have no crossing cluster (recall that for a bounded subset $A$ of $\Cb$, its \emph{filling} is defined as the complement of the unique unbounded connected component of $\Cb \setminus \bar A$). Hence, they are stochastically dominated by $\Lc_{\Db}$, which does not contain cluster of diameter greater than $r/10$ with positive probability. Therefore, we conclude 
	\[
	\Pb( \bar\Ac_2(\eps, r) ) \lesssim \Pb(\bar\Ac_2(\eps, r) \cap \Gc(\eps, r)),
	\]
	which combined with \eqref{eq:bk1} finishes the proof.  
\end{proof}

Let us first deal with the boundary case. For this purpose, we consider the following Markovian exploration of $\Gamma^+$.

\begin{expl}\label{expl1}
We explore $\Gamma^+$ along the arc $\ell(t):=-\exp(-it)r$, see Figure~\ref{fig:2arm_CLE}. We trace every loop in $\Gamma^+$ that $\ell$ encounters in the counterclockwise direction, in the order that $\ell$ encounters them. We stop this exploration the first time that we reach $B_{r/2}$, namely we stop at a time that we are tracing along a loop $\gamma$ that intersects $B_{r/2}$, exactly at the moment that $\gamma$ reaches $\partial B_{r/2}$, so that we have discovered a piece $\wh \gamma$ of $\gamma$. 
If none of the loops in $\Gamma^+$ intersect both $\partial B_r$ and $\partial B_{r/2}$, then we stop this process at the time that we have discovered all the loops in $\Gamma$ that intersect $\partial B_r$.

On the event $E_1$ that there exists a loop in $\Gamma^+$ which intersects both $\partial B_r$ and $\partial B_{r/2}$, we define $\gamma$ and $\wh \gamma$ just as above.
Let $t_1$ be the first time (according to the parametrization of $\ell$) that $\ell$ intersects $\gamma$. Let $a:=\ell(t_1)$, which is one endpoint of $\wh\gamma$. Let $b$ be the other endpoint of $\wh\gamma$.
Let $K(t_1)$ be the union of $\ell((0,t_1))$ together with all the loops in $\Gamma^+$ that $\ell((0,t_1))$ intersects. 
Let $H$ be the connected component containing $0$ of $\Hb\setminus \overline{K(t_1) \cup \wh \gamma}$. Let $f$ be the unique conformal map from $H$ onto $\Hb$ with $f(0)=0$, $f'(0)=1$ and $f(\infty)=\infty$.

Let $\Sigma$ be the $\sigma$-algebra generated by $E_1, \wh\gamma$ and by all the loops in $\Gamma^+$ that $\ell((0,t_1))$ intersects.
Note that $f$ and $H$ are measurable w.r.t.\ $\Sigma$.
Conditionally on $\Sigma$ and on $E_1$, the image of $\gamma\setminus \wh\gamma$ under $f$ is a chordal SLE$_\kappa$ in $\Db$ between $f(a)$ and $f(b)$, which we denote by $\wt \gamma$.
\end{expl}

\begin{figure}[t]
	\centering
	\includegraphics[width=0.85\textwidth]{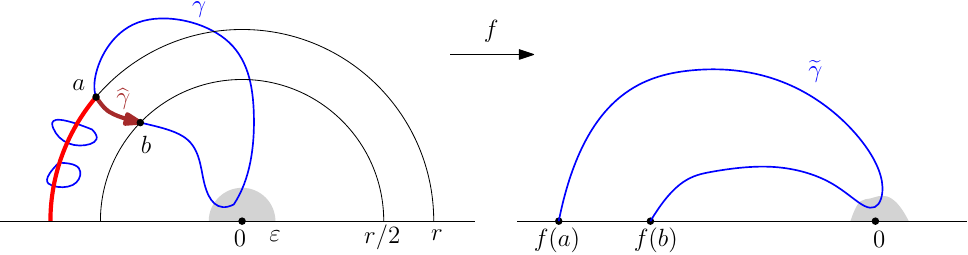}
	\caption{Exploration process \ref{expl1}. The red curve is $\ell([0,t_1])$. The brown curve is $\wh \gamma$.}
	\label{fig:2arm_CLE}
\end{figure}
\begin{proof}[Proof of Proposition~\ref{prop:two-arm}, boundary case]
We perform Exploration process \ref{expl1} for $\Gamma^+$, and use the notations there. We work on the event $E_1$.
 By Lemma~\ref{lem:Koebe}, we have
\begin{align}\label{eq:koebe1}
B_{\eps/4} \cap \Hb \subseteq f(B_\eps \cap \Hb) \subseteq B_{4\eps} \cap \Hb. 
\end{align}
Let $\wt \gamma:=f(\gamma\setminus \wh \gamma)$. 
Let $E_2$ be the event that $\wt\gamma$ intersects $\partial B_{\eps/4}$.
If both $E_1$ and $E_2$ hold, then $\ba_2(\eps, r)$ occurs. Therefore
\begin{align}\label{eq:lo1}
\Pb\big[\ba_2(\eps, r)\big] \ge \Pb[E_1 \cap E_2] =\Eb[ \Pb[E_2 \mid \Sigma] \one_{E_1}].
\end{align}
Conditionally on $\Sigma$, $\wt \gamma$ is a chordal SLE$_\kappa$ in $\Hb$ between $f(a)$ and $f(b)$.
We first apply the conformal map $h: z\mapsto (z-f(b))/(z-f(a))$ to $\wt \gamma$. Since $h(0)=f(b)/f(a)$ and $h'(0)=(f(b)-f(a))/f(a)^2$, we can deduce by Proposition~\ref{prop:schramm} that
\begin{align}\label{eq:lo2}
 \Pb[E_2 \mid \Sigma] \asymp \bigg( \frac{f(b)-f(a)}{f(a) f(b)} \bigg)^{\bxi_2(\kappa)} \eps^{\bxi_2(\kappa)}.
\end{align}
Note that the following quantity does not depend on $\eps$, 
\begin{align}\label{eq:lo3}
\Eb\Bigg[ \bigg( \frac{f(b)-f(a)}{f(a) f(b)} \bigg)^{\bxi_2(\kappa)} \one_{E_1}\Bigg] \in (0, \infty).
\end{align}
The fact that \eqref{eq:lo3} is positive follows from the observations that $E_1$ is an event with positive probability, and that on $E_1$, the quantity $ (f(b)-f(a))/ (f(a)f(b))$ is a.s.\ positive. On the other hand, if \eqref{eq:lo3} was infinite, then by \eqref{eq:lo1} we would have $\Pb\big[\ba_2(\eps, r)\big] =\infty$ for all $\eps$ small enough, which is impossible.
Combining \eqref{eq:lo1}, \eqref{eq:lo2}, \eqref{eq:lo3}, we can deduce the lower bound 
$
\Pb\big[\ba_2(\eps, r)\big]  \gtrsim \eps^{\bxi_2(\kappa)}.
$

Let $E_3$ be the event that $\wt\gamma$ intersects $\partial B_{4\eps}$.
On the event $\ba_2(\eps, r) \cap \Gc^+(\eps, r)$, both $E_1$ and $E_3$ hold. Therefore,
\begin{align*}
\Pb\big[\ba_2(\eps, r) \cap \Gc^+(\eps, r)\big] \le \Pb [E_1 \cap E_3]= \Eb[ \Pb[E_3 \mid \Sigma] \one_{E_1}].
\end{align*}
Similarly to \eqref{eq:lo2}, we have
\begin{align*}
 \Pb[E_3 \mid \Sigma] \asymp \bigg( \frac{f(b)-f(a)}{f(a) f(b)} \bigg)^{\bxi_2(\kappa)} \eps^{\bxi_2(\kappa)}.
\end{align*}
Combined with \eqref{eq:lo3}, we can also get the upper bound
\begin{align}\label{eq:up1}
\Pb\big[\ba_2(\eps, r) \cap \Gc^+(\eps, r)\big]\lesssim \eps^{\xi_2(\kappa)}.
\end{align}
By Lemma~\ref{lem:2arm_good}, this further implies the upper bound $\Pb\big[\ba_2(\eps, r)\big]  \lesssim \eps^{\bxi_2(\kappa)}$, and completes the proof for the boundary case.
\end{proof}

Let us now turn to the interior case. For this purpose, we consider the following Markovian exploration of $\Gamma$.

\begin{expl}\label{expl2}
See Figure~\ref{fig:2arm_CLE_int} for an illustration. Let $\ell$ be a curve which first goes from $-i$ to $-ri$ in a straight vertical line, and then follows the circle $\partial B_r$ in the clockwise direction.
 We trace every loop in $\Gamma$ that $\ell$ encounters in the counterclockwise direction, in the order that $\ell$ encounters them. We stop this exploration the first time that we reach $B_{r/2}$, namely we stop at a time that we are tracing along a loop $\gamma$ that intersects $B_{r/2}$, exactly at the moment that $\gamma$ reaches $\partial B_{r/2}$, so that we have discovered a piece $\wh \gamma$ of $\gamma$. 
If none of the loops in $\Gamma$ intersect both $\partial B_r$ and $\partial B_{r/2}$, then we stop this process at the time that we have discovered all the loops in $\Gamma$ that intersect $[-i, -ri] \cup \partial B_r$.

On the event $E_1$ that there exists a loop in $\Gamma$ which intersects both $\partial B_r$ and $\partial B_{r/2}$, we define $\gamma$ and $\wh \gamma$ just as above.
Let $t_1$ be the first time (according to the parametrization of $\ell$) that $\ell$ intersects $\gamma$. Let $a:=\ell(t_1)$. Let $b$ be the other endpoint of $\wh\gamma$.
Let $K(t_1)$ be the union of $\ell((0,t_1))$ together with all the loops in $\Gamma$ that $\ell((0,t_1))$ intersects. 
Let $U$ be the connected component containing $0$ of $\Db\setminus \overline{K(t_1) \cup \wh \gamma}$. Let $f$ be the unique conformal map from $U$ onto $\Db$ with $f(0)=0$ and $f(a)=i$.

Let $\Sigma$ be the $\sigma$-algebra generated by $E_1$, $\wh\gamma$ and by all the loops in $\Gamma$ that $\ell((0,t_1))$ intersects. Note that $f$ and $U$ are measurable w.r.t.\ $\Sigma$.
Conditionally on $\Sigma$ and on $E_1$, the image of $\gamma\setminus \wh\gamma$ under $f$ is a chordal SLE$_\kappa$ in $\Db$ between $f(a)$ and $f(b)$, which we denote by $\wt \gamma$.
\end{expl}

\begin{figure}[t]
	\centering
	\includegraphics[width=0.75\textwidth]{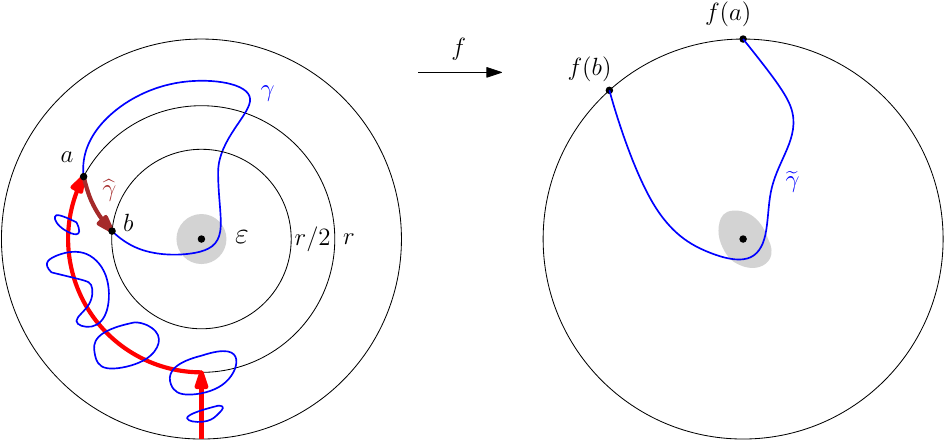}
	\caption{Exploration process \ref{expl2}. The red curve is $\ell([0,t_1])$. The brown curve is $\wh \gamma$.}
	\label{fig:2arm_CLE_int}
\end{figure}
\begin{proof}[Proof of Proposition~\ref{prop:two-arm}, interior case]
We perform Exploration process \ref{expl2} for $\Gamma$, and use the notations there.
We work on the event $E_1$. By the Schwarz lemma, we have $1\le f'(0) \le 2/r$.
By the Koebe $1/4$ theorem, we have
\begin{align}\label{eq:koebe2}
B_{\eps/4} \subseteq f(B_\eps) \subseteq B_{8\eps/r}. 
\end{align}
Let $E_2$ be the event that $\wt \gamma$ intersects $B_{\eps/4}$. If both $E_1$ and $E_2$ hold, then $\Ac_2(\eps, r)$ occurs. Therefore
\begin{align}\label{eq:int_lo1}
\Pb\big[\Ac_2(\eps, r)\big] \ge \Pb[E_1 \cap E_2] =\Eb[ \Pb[E_2 \mid \Sigma] \one_{E_1}].
\end{align}
On $E_1$, conditionally on $\Sigma$, $\wt \gamma$ is a chordal SLE$_\kappa$ in $\Hb$ between $f(a)$ and $f(b)$. By Proposition~\ref{prop:beffara}, we have
\begin{align}\label{eq:int_lo2}
\Pb[E_2 \mid \Sigma] \asymp \eps^{\xi_2(\kappa)} |f(a) - f(b)|^{\bxi_2(\kappa)}.
\end{align}
Note that the following quantity does not depend on $\eps$, 
\begin{align}\label{eq:int_lo3}
\Eb\left[  |f(a) - f(b)|^{\bxi_2(\kappa)} \one_{E_1}\right] \in (0, \infty).
\end{align}
The reason that \eqref{eq:int_lo3} is in $(0,\infty)$ is similar to \eqref{eq:lo3}. Combining \eqref{eq:int_lo1}, \eqref{eq:int_lo2}, \eqref{eq:int_lo3}, we can deduce the lower bound 
$\Pb\big[\Ac_2(\eps, r)\big]  \gtrsim \eps^{\xi_2(\kappa)}.$

Let $E_3$ be the event that $\wt\gamma$ intersects $\partial B_{8\eps/r}$. Similarly to \eqref{eq:up1}, we can deduce that 
\begin{align*}
\Pb\big[\Ac_2(\eps, r) \cap \Gc(\eps, r)\big] \le \Pb [E_1 \cap E_3]= \Eb[ \Pb[E_3 \mid \Sigma] \one_{E_1}]\lesssim \eps^{\xi_2(\kappa)}.
\end{align*}
By Lemma~\ref{lem:2arm_good}, this further implies the upper bound $\Pb\big[\Ac_2(\eps, r)\big]  \lesssim \eps^{\xi_2(\kappa)}$, and completes the proof for the interior case.
\end{proof}

\section{Separation lemma for CLE} \label{sec:separation}

This section is dedicated to deriving separation lemmas for CLE, or equivalently loop-soup clusters, in various settings. They are in fact slightly stronger than the simplified version stated in the introduction as Theorem~\ref{thm:sep_cle}.

This section is structured as follows. We first consider the interior case in Section~\ref{subsec:i_sep}, where we give a stronger separation lemma (Proposition~\ref{prop:sep_i}), that will eventually be used in the derivation of Thereom~\ref{thm:main}, and as an intermediate step, we introduce an alternative version for the BLS (Lemma~\ref{lem:sl}), which is stated in terms of a key notion called quality. We collect results for the BLS in Section~\ref{subsec:summary}, which are analogs in the continuum of properties derived in \cite{GNQ1} for the discrete setting, i.e.\ the random walk loop soup. Next, we use these results to establish Lemma~\ref{lem:sl} and Proposition~\ref{prop:sep_i} in Section~\ref{subsec:proof_i_sep}. We then show the up-to-constants estimates between $\ione(\eps,r)$ and $\itwo(\eps,r)$ in Section~\ref{subsec:equivalence}. Finally, in Section~\ref{subsec:boundary}, we summarize the corresponding results in the boundary case.

\subsection{Interior case}\label{subsec:i_sep}
Suppose $r<1/2$. On the event $\itwo(\eps,r)$, there are two loops $\gamma_1$ and $\gamma_2$ that intersect both 
$\partial B_\eps$ and $\partial B_r$.  Let $\gitwo(\eps, r)$ be the event that the following conditions all hold
\begin{itemize}
\item $\gamma_j \subset (B_r \cup B_{r/10}(re^{i(j-1)\pi})) \setminus B_{r/10}(re^{i j \pi})$ for $j=1,2$,
\item all the loops in $\Gamma\setminus\{\gamma_1, \gamma_2\}$ have diameter less than $r/40$.
\end{itemize}

\begin{proposition}\label{prop:sep_i}
For $\kappa\in(8/3,4]$, there exists a constant $c>0$ such that for all $0<2\eps< r<1/2$,
\begin{align*}
\Pb(\itwo(\eps,r))\le c\, \Pb(\itwo(\eps, r) \cap \gitwo(\eps, r)).
\end{align*}
\end{proposition}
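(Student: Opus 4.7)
My plan is to work with the Brownian loop-soup (BLS) representation of CLE$_\kappa$ and to implement a multi-scale separation scheme in the spirit of Kesten. Let $\Lc$ denote a BLS in $\Db$ with intensity $\alpha$ given by \eqref{eq:c_kappa}, coupled with $\Gamma$ so that the outer boundaries of its outermost clusters are the loops of $\Gamma$. Then $\itwo(\eps, r)$ is equivalent to the existence of two distinct outermost $\Lc$-clusters, each intersecting both $\partial B_\eps$ and $\partial B_r$. The key advantage of this viewpoint is the spatial Markov/restriction property of $\Lc$: its restrictions to disjoint annular subdomains are independent, so we may freely decompose the configuration and resample one piece while keeping the other frozen.

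I would next introduce dyadic scales $r_k := 2^{-k} r$ between $\eps$ and $r$ and attach at each scale $r_k$ a \emph{quality} $Q_k \in [0,1]$ for the two-cluster crossing event at that scale. The quality combines a geometric ingredient (the spread and separation of the cluster traces on $\partial B_{r_k}$) with a probabilistic one (the conditional probability of extending the two cluster traces from $\partial B_{r_k}$ to $\partial B_{r_{k-1}}$ through a pair of independent disjoint ``good'' BLS loops). The heart of the argument is a one-step improvement statement: conditionally on the two-arm event from $\eps$ to $r_k$ holding with current quality $q$, the conditional probability (given the inside configuration) that the two-arm event survives up to $r_{k-1}$ with quality bounded below by some absolute constant $q_*>0$ is itself bounded below by a function of $q$ alone. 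A standard inductive renewal argument then delivers uniform positivity of the quality at a macroscopic scale such as $r/2$, conditional on $\itwo(\eps, r)$.

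Once positive quality is secured at scale $r/2$, I convert it into the full geometric confinement required by $\gitwo(\eps,r)$ by a local surgery near $\partial B_r$: condition on the portion of $\Lc$ inside $B_{r/2}$, and in the independent annulus $A_{r/2, r}$ consider the positive-probability event that (i) two fresh independent pairs of BLS loops realize the extensions of the two cluster traces within the prescribed tubes $(B_r \cup B_{r/10}(r e^{i(j-1)\pi})) \setminus B_{r/10}(r e^{i j \pi})$, $j=1,2$, and (ii) no loop of $\Lc$ contributing to these outer cluster boundaries has diameter exceeding $r/40$. Both ingredients carry uniformly positive probability by FKG for the BLS together with standard loop-measure estimates, and together they yield $\gitwo$ given that the two-arm event has positive quality at scale $r/2$. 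The part of the $\gitwo$ definition asking that all \emph{other} loops have diameter less than $r/40$ is then handled, as in Lemma~\ref{lem:2arm_good}, by stochastic domination of the remaining loops by an unconditioned BLS.

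The main obstacle will be the one-step quality improvement. Unlike separation for independent Brownian motions \cite{MR1386294, La1998} or independent lattice paths \cite{Ke1987a, No2008}, where one can directly track the tips of the arms across scales, arms here are traces of BLS clusters, which are neither Markov nor easily parametrized; this is precisely why the quality must carry a probabilistic extension component rather than being purely geometric. Controlling how this extension probability evolves under a change of scale requires showing that it is robust with respect to small resamplings of the BLS outside the current scale, and this is exactly the role of a quasi-stability input for loop-soup arm events, analogous to Lemma~\ref{lem:quasi}. Closing the induction on scales will therefore rest on combining this quasi-stability with the independence structure of $\Lc$ and with the good-extension probability estimates for pairs of Brownian loops.
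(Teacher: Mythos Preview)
Your proposal is correct and follows essentially the same route as the paper: the BLS representation, dyadic scales with a hybrid geometric--probabilistic quality (defined via the extension probability by a $\delta$-good pair of loops at the next scale), a one-step quality improvement (Lemma~\ref{lem:sep-1}) combined with the quasi-stability input (Lemma~\ref{lem:quasi}) to run the induction (Lemma~\ref{lem:sl}), and finally the local surgery near $\partial B_r$ together with the stochastic-domination argument of Lemma~\ref{lem:2arm_good} to upgrade positive quality at scale $r/2$ into $\gitwo(\eps,r)$. The only cosmetic slip is in your item~(ii): the diameter condition in $\gitwo$ concerns the \emph{CLE loops} in $\Gamma\setminus\{\gamma_1,\gamma_2\}$ (i.e.\ outer boundaries of the remaining outermost clusters), not individual Brownian loops, but your last paragraph already handles this correctly.
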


\begin{remark}
		This type of separation lemma also holds in the discrete setting, if we replace the Brownian loop soup clusters by the random walk loop soup clusters everywhere in the definition of the events.	
\end{remark}
	
Below, we work with the BLS. Recall that we use $\Lc_\Db$ to denote the BLS in $\Db$ with intensity $\alpha$, and let $\eps\le s\le r$. We write $\Lc_s$ for the set of loops in $\Lc_\Db$ that are contained in $B_s$. We first introduce some notions for deterministic objects.
For any loop configuration $L_s$ in $B_s$, we can define the quality associated with $L_s$ as follows, where for any set $A\subset \Cb$ and any loop configuration $L$, we use $\Lambda(A,L)$ to denote the union of $A$ and all the clusters in $L$ that intersect $A$. We say that $(\eta_1,\eta_2)$ is an admissible pair of excursions in $B_s$ if $\Lambda(\eta_1,L_s)$ and $\Lambda(\eta_2,L_s)$ both intersect $B_\eps$, but they do not intersect each other. If we denote the starting and ending points of $\eta_i$ by $x_i$ and $y_i$, respectively, we say that the admissible pair $(\eta_1,\eta_2)$ is $\delta$-separated (at scale $s$) if 
\begin{equation}\label{eq:poe}
	\Lambda(\eta_1\cup B_{\delta s}(x_1)\cup B_{\delta s}(y_1),L_s)\cap (\eta_2\cup B_{\delta s}(x_2)\cup B_{\delta s}(y_2))=\emptyset.
\end{equation}
Furthermore, we say that $(\gamma_1,\gamma_2)$ is a $\delta$-good pair of loops (at scale $s$) if each $\gamma_i$ contains an excursion $\eta_i$ such that $(\eta_1,\eta_2)$ is a $\delta$-separated admissible pair, and $\Lambda(\gamma_1,L_s)\cap \Lambda(\gamma_2,L_s)=\emptyset$. Note that if $(\gamma_1,\gamma_2)$ is a $\delta$-good pair of loops, then it is $\delta'$-good for all $\delta' \in (0,\delta)$. 

Given the loop configuration $L_s$ in $B_s$, we consider the \emph{extension probability} across $s$ (from $\eps$) by $\delta$-good pairs of loops in $\Lc_{2s}\setminus \Lc_s$ (i.e., the loop soup at the next scale), which is defined as
\begin{equation}\label{eq:m_s}
	m_s(\delta; \eps, L_s)=\Pb\left( \begin{split}
		\text{there is a $\delta$-good pair of loops $(\gamma_1,\gamma_2)$ in $\Lc_{2s}\setminus \Lc_s$ such that the two clusters in}\\ 
			\text{$L_s\cup (\Lc_{2s}\setminus \Lc_s)$ containing $\gamma_1$ and $\gamma_2$, respectively, are disjoint and outermost}
	\end{split}  \right).
\end{equation}
Since $m_s(\delta; \eps, L_s)$ is decreasing in $\delta$, the following \emph{quality} at scale $s$ is well-defined:
\begin{equation}\label{eq:quality}
	Q_\eps(L_s):=\sup\{ \delta\in [0,1/4]: m_s(\delta; \eps, L_s)\ge \delta \}.
\end{equation}
Then, our separation lemma for loop-soup clusters can be stated as follows, in terms of the above-mentioned quality.
\begin{lemma}\label{lem:sl}
	For all $\alpha\in (0,1/2]$, there exist constants $u,c>0$ such that for all $0<2\eps< r<1/2$,
	\begin{equation}\label{eq:sep-1}
		\Pb( Q_\eps(\Lc_r)\ge u )\ge c\, \Pb( \itwo(\eps,r) ).
	\end{equation}
\end{lemma}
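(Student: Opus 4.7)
The plan is to follow the classical scale-by-scale framework for separation lemmas (\`a la Kesten \cite{Ke1987a} and Lawler \cite{MR1386294,La1998}) adapted to the BLS cluster setting, with the probabilistic-geometric quality $Q_\eps$ of \eqref{eq:quality} playing the role of the usual ``quality'' parameter. The argument would rely crucially on the BLS estimates collected in Section~\ref{subsec:summary} (continuum analogues of the RWLS results of \cite{GNQ1}) and on the stability result Lemma~\ref{lem:quasi}.

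I would introduce dyadic scales $s_k := 2^k (2\eps)$ for $k = 0, 1, \ldots, K$, where $K$ is the largest index with $s_K \leq r$, and prove inductively in $k$ the stronger statement
\[
\Pb\bigl(Q_\eps(\Lc_{s_k}) \geq u\bigr) \;\geq\; c\, \Pb\bigl(\itwo(\eps, s_k)\bigr),
\]
for universal constants $u, c > 0$ depending only on $\alpha$. The base case $k = 0$ reduces to a direct computation on an annulus of bounded conformal modulus: both events have constant probability, and the claim follows from a fixed non-intersection estimate for two independent Brownian loops. For the inductive step from $s_k$ to $s_{k+1}$, I would exploit the independence between $\Lc_{s_{k+1}}$ and the fresh shell $\Lc_{s_{k+2}} \setminus \Lc_{s_{k+1}}$, and use the extension estimates of Section~\ref{subsec:summary} to show that, conditionally on a realisation of $Q_\eps(\Lc_{s_k}) \geq u$, one can augment the configuration in the annulus between scales $s_k$ and $s_{k+1}$ by an independent pair of well-separated Brownian ``bridge'' loops so that $Q_\eps(\Lc_{s_{k+1}}) \geq u'$ still holds, for some universal $u' \in (0,u]$. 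Quasi-multiplicativity of the two-arm event would then supply the correct ratios $\Pb(\itwo(\eps, s_{k+1}))/\Pb(\itwo(\eps, s_k))$ matching each inductive step, and Lemma~\ref{lem:quasi} would let me pass from the quality estimate at the largest dyadic scale $s_K$ back to the desired bound at scale $r$.

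The main obstacle is precisely that, unlike in the BM or RWLS analyses of \cite{GLQ2022,GNQ1}, the quality $Q_\eps$ is \emph{not} a purely geometric quantity. Two clusters meeting in a dyadic annulus at scale $s$ can interact in arbitrarily tangled ways, and no natural scalar captures how extendible the configuration is. The probabilistic definition of $Q_\eps$ through $m_s$ in \eqref{eq:m_s} is engineered precisely to bypass this by averaging over the fresh loops at the next scale, which however forces the scale-by-scale propagation to be carried out probabilistically rather than geometrically. Consequently, the delicate technical work lies in showing that ``high quality at scale $s_k$'' transfers, up to a constant multiplicative factor, into ``high quality at scale $s_{k+1}$'', without any direct geometric handle on the configuration near $\partial B_{s_k}$; this is where the BLS extension estimates of Section~\ref{subsec:summary} and the stability Lemma~\ref{lem:quasi} do the heavy lifting.
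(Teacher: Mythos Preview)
Your inductive scheme has a genuine gap: the constant $c$ would degrade at every scale. The extension step you describe is exactly Lemma~\ref{lem:sep-1}, which yields
\[
\Pb\bigl(Q_\eps(\Lc_{s_{k+1}})\ge u\bigr)\ \ge\ v(u)\,\Pb\bigl(Q_\eps(\Lc_{s_k})\ge u\bigr)
\]
for some fixed $v(u)\in(0,1)$ that you do not get to choose. Feeding in the inductive hypothesis gives only $\Pb(Q_\eps(\Lc_{s_{k+1}})\ge u)\ge v(u)\,c\,\Pb(\itwo(\eps,s_k))\ge v(u)\,c\,\Pb(\itwo(\eps,s_{k+1}))$, so after $K$ scales the constant has become $v(u)^K c$, which vanishes. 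To close the induction with a \emph{universal} $c$ you would need $\Pb(\itwo(\eps,s_{k+1}))/\Pb(\itwo(\eps,s_k))\le v(u)$ uniformly, and quasi-multiplicativity does not give this: it bounds the ratio by a constant times a one-scale crossing probability, but nothing forces that probability to be $\le v(u)$. Matching these two rates is tantamount to already knowing the exponent, which is circular.

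The paper does \emph{not} run your forward induction. Instead it combines Lemma~\ref{lem:quasi} with Lemma~\ref{lem:sep-2} (which is missing entirely from your outline) to produce a recursive inequality of the form
\[
\Pb(\Ac(i))\ \le\ b(\delta)\,\Pb\bigl(\Qc(r_i)\ge\rho\bigr)\ +\ \bigl(b(\delta)\,\theta+\delta^c\bigr)\,\Pb(\Ac(i-1)),
\]
where the recursive coefficient $b(\delta)\theta+\delta^c$ can be made \emph{arbitrarily small} by first choosing $\delta$ and then $\theta$. Iterating from $i=M$ down to $i=2$ yields a sum $\sum_i (b(\delta)\theta+\delta^c)^i\,\Pb(\Qc(r_{M-i})\ge\rho)$, and each term is then pushed forward to scale $r_M$ via Lemma~\ref{lem:sep-1} at cost $v(u)^{-i}$. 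The freedom to take $b(\delta)\theta+\delta^c\le v(u)/2$ is exactly what makes the resulting geometric series converge; this is the mechanism that replaces your failed constant-preservation. Note in particular that Lemma~\ref{lem:quasi} is not an endpoint adjustment from $s_K$ to $r$ as you suggest, but the source of the crucial small parameter $\delta^c$ inside the recursion.
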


\subsection{Summary of corresponding results in \cite{GNQ1} for BLS}\label{subsec:summary}
We now state continuous versions of several key results that were derived in \cite{GNQ1} in the discrete setting, including various separation lemmas, a locality property, and a quasi-multiplicativity upper bound. Since they can all be established in a similar way as in \cite{GNQ1}, we refer the reader to that paper and skip the proofs. These properties will be used in the derivation of the separation lemma for CLE.

\paragraph{Separation lemma.}
We start with a version of separation lemma for Brownian motions inside a BLS. The setup is analogous to that of Proposition~4.7 in \cite{GNQ1}. 
Let $L_\eps$ be a loop configuration in $B_\eps$.
Let $V_1$ and $V_2$ be two disjoint subsets of $B_\eps$, which both intersect $\partial B_\eps$. Let $j,k\ge 1$. Let $\bar x=(x_1,\ldots,x_j)$ be a vector of $j$ vertices in $V_1\cap\partial B_\eps$ and $\bar y=(y_1,\ldots,y_k)$ be a vector of $k$ vertices in $V_2\cap \partial B_\eps$ (some of the points in $\bar x$ may coincide, and similarly with $\bar y$).
We view the quintuple $(L_\eps,V_1,V_2,\bar x,\bar y)$ as an initial configuration, and we restrict to the case when $\Lambda(V_1,L_\eps)\cap V_2=\emptyset$.
For $\eps\le s\le r$, let $\Pi^1_s$ (resp.\ $\Pi^2_s$) be the union of $j$ (resp.\ $k$) independent Brownian motions started, respectively, from each of the $j$ points in $\bar x$ (resp.\ each of the $k$ points in $\bar y$), and stopped upon reaching $\partial B_s$. We require that all of these Brownian motions are independent. 
Finally, let $D\supseteq\Db$, and let $\Lc_{D}$ be the BLS in $D$ with intensity $\alpha$, which is 
independent of all the previous Brownian motions.
We use $\Lc_{\eps,D} := \Lc_D \setminus \Lc_\eps$ to denote the loop soup made of the loops in $\Lc_{D}$ which are not entirely contained in $B_\eps$. 
The quality at $s$ is then defined as
\begin{align} \notag
	Q^{j,k}(s) := \sup & \Big\{ \delta \ge 0 \: : \: \Lambda \Big( V_1 \cup \Pi^1_s \cup \Big( \cup_{z \in \Pi^1_s \cap \partial B_s} B_{\delta s}(z) \Big), L_\eps \uplus \Lc_{\eps,D} \Big)\\
	& \hspace{4cm} \cap \Big( V_2 \cup \Pi^2_s \cup \Big( \cup_{z \in \Pi^2_s \cap \partial B_s} B_{\delta s}(z) \Big) \Big) = \emptyset \Big\}. \label{eq:Qjk}
\end{align}
In this definition, we consider the unions of, respectively, $j$ and $k$ balls, all with radius $\delta s$, centered on the hitting points along $\partial B_s$ of each of the $j$ Brownian motions in $\Pi^1_s$, and of each of the $k$ Brownian motions in $\Pi^2_s$.
The following separation result for two packets of Brownian motions is analogous to Proposition~4.7 in \cite{GNQ1}.

\begin{lemma}\label{lem:sep-jk}
	For all $j,k\ge 1$, there exists a constant $c(j,k)>0$ such that the following holds. For all $0<2\eps< r<1/2$ and $D \supseteq \Db$, for each initial configuration $(L_\eps,V_1,V_2,\bar x,\bar y)$ with $\bar x=(x_1,\ldots,x_j)$ in $V_1\cap \partial B_\eps$ and $\bar y=(y_1,\ldots,y_k)$ in $V_2\cap \partial B_\eps$, and for any intensity $\alpha\in (0,1/2]$ of the BLS under consideration,
	\begin{equation} \label{eq:sep_twopackets}
		\Pb \big( Q^{j,k}(r)>1/(10(j+k)) \mid Q^{j,k}(r)>0 \big) \ge c.
	\end{equation}
	Moreover, \eqref{eq:sep_twopackets} also holds with $Q^{j,k}(r)$ replaced by $\bar Q^{j,k}(r):=Q^{j,k}(r)\ind_\Dc$, for any event $\Dc$ which is decreasing for the BLS.
\end{lemma}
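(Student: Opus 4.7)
The plan is to adapt the strategy of \cite[Proposition~4.7]{GNQ1} from the random walk loop soup to the Brownian loop soup. The overall approach is the classical Kesten-type dyadic-scale argument, combined with a one-step improvement lemma. First I would work with dyadic scales $s_i := 2^i\eps$, and establish the following one-step improvement: there exist constants $\delta_0, c_0 > 0$ depending only on $j, k$ such that for any $i$ with $2 s_i \le r$ and any admissible $\Fc_i$-measurable configuration with $Q^{j,k}(s_i) > 0$, one has $\Pb(Q^{j,k}(2 s_i) > \delta_0 \mid \Fc_i) \ge c_0$, where $\Fc_i$ is the $\sigma$-algebra generated by $\Pi^1_{s_i}$, $\Pi^2_{s_i}$, $L_\eps$, and by the loops of $\Lc_{\eps, D}$ entirely contained in $B_{s_i}$. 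The improvement would be obtained through an explicit forcing event on the annulus $A_{s_i, 2 s_i}$ of universal positive probability that implies good separation at $\partial B_{2 s_i}$. This forcing event consists of two ingredients: (a) force each of the $j+k$ Brownian motions continuing from its hitting position on $\partial B_{s_i}$ to stay inside a narrow tube that takes it to a prescribed target sector of $\partial B_{2 s_i}$, with the $j+k$ target sectors separated by macroscopic angular gaps, which has positive probability uniformly in the initial point by standard Brownian tube estimates; (b) force the loops of $\Lc_{\eps, D}$ intersecting $A_{s_i, 2 s_i}$ not to bridge the tubes of the two packets.

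For ingredient~(b), I would decompose the relevant loops by means of the restriction property of the BLS: the loops entirely contained in $A_{s_i, 2 s_i}$ form an independent BLS of intensity $\alpha$ on that annulus, for which the ``no cluster bridging the prescribed sectors'' event has probability bounded below by scaling invariance (since for $\alpha \in (0, 1/2]$ the two-arm probability in a fixed annulus is strictly less than one, by arguments analogous to those behind Proposition~\ref{prop:two-arm}); the loops straddling $\partial B_{s_i}$ are $\Fc_i$-measurable, and the tube separation can be chosen large enough so that any pre-existing cluster attached to one packet stays confined to its sector; the loops straddling $\partial B_{2 s_i}$ are absorbed at the next scale through the Markov property. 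Once the one-step improvement is established, iterating it across the consecutive dyadic scales from $\eps$ up to $r$, using the strong Markov property of both the Brownian motions and the BLS, yields $\Pb(Q^{j,k}(r) > 1/(10(j+k)) \mid Q^{j,k}(r) > 0) \ge c$ for some $c = c(j, k) > 0$, possibly after slightly reducing $\delta_0$ at the final scale to absorb the tube widths.

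The principal technical obstacle will be ingredient~(b): controlling the possibly large BLS clusters already attached to the Brownian motions at scale $s_i$, and those that form during the extension step. Unlike the case of bare non-intersecting Brownian motions, clusters can grow and potentially connect the two packets even when the Brownian traces themselves are geometrically well-separated. This is handled by the subcriticality of the BLS for $\alpha \in (0, 1/2]$ reflected in the two-arm estimate, together with careful bookkeeping of the $\Fc_i$-measurable clusters attached to the two packets before the extension. Finally, for the ``moreover'' statement, the point is that each forcing event is of ``absence-of-certain-loops'' type, hence decreasing in the loop configuration; by the FKG inequality for Poisson point processes, it is compatible with any decreasing event $\Dc$ for the BLS, so the whole chaining argument goes through with $Q^{j,k}(r)$ replaced by $\bar Q^{j,k}(r) = Q^{j,k}(r) \ind_\Dc$.
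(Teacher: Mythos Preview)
The paper does not actually prove this lemma; it states that the result is the continuous analogue of \cite[Proposition~4.7]{GNQ1} and refers the reader there. Your plan to adapt that argument is therefore exactly what the paper intends.

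However, your one-step improvement is too strong and, as stated, is false. You claim that for \emph{any} $\Fc_i$-measurable configuration with $Q^{j,k}(s_i)>0$ one has $\Pb\big(Q^{j,k}(2s_i)>\delta_0\mid\Fc_i\big)\ge c_0$ with a \emph{uniform} $c_0$. But if the two packets (together with their attached clusters) are separated at scale $s_i$ by a gap of order $\rho s_i$ with $\rho$ arbitrarily small, then already the probability that the Brownian motions reach $\partial B_{2s_i}$ without the two sides merging is $O(\rho^c)$ (Beurling-type bound, or the boundary two-arm estimate for the loop soup), so it cannot be bounded below by a universal constant. Note also that if your statement did hold, a single application at the last dyadic scale would give the lemma outright---no iteration needed---which is a sign that the formulation is off.

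The standard argument in \cite{GNQ1}---mirrored in the present paper for the cluster quality in Lemmas~\ref{lem:sep-2} and~\ref{lem:sep-1}---uses \emph{two} ingredients rather than one: (i) an extension lemma, from quality $\ge\rho$ at scale $s_i$ to quality $\ge u$ at scale $2s_i$ with probability $v(\rho)$ \emph{allowed to depend on $\rho$}; and (ii) a die-out lemma, saying that a configuration with quality in $(0,\rho)$ survives one more scale with probability at most $\theta(\rho)$, where $\theta(\rho)\to 0$ as $\rho\to 0$. One then chains these across the dyadic scales, the die-out lemma making the contribution of ``bad'' scales a convergent geometric series. Your forcing construction (your ingredients (a) and (b)) is essentially the correct mechanism for the extension step~(i), but the die-out step~(ii) is missing entirely, and without it the chaining does not close. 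Your FKG treatment of the ``moreover'' clause is on the right track.
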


\begin{remark}\label{rmk:general-sl}
In \cite{GNQ1}, we also derive separation lemmas in other settings, which all remain valid here in the continuous setting. In particular, we make use later of a separation lemma in the reversed direction, and one concerning non-disconnection events (corresponding to Propositions 4.8 and 4.9 of \cite{GNQ1}, respectively). For brevity we do not repeat their statements (nor their proofs), since these are exact analogs of their discrete counterparts.
\end{remark}

\paragraph{Locality and the quasi-multiplicativity.}
With Lemma~\ref{lem:sep-jk} at hand (and variations, as explained in Remark~\ref{rmk:general-sl} above), we can derive the locality and quasi-multiplicativity properties for the continuous arm events, as we did for their discrete counterparts in \cite{GNQ1}.
We first extend the definition of of arm events to a general domain. For $D\supseteq B_{2r}$, we let $\itwo(\eps,r;D)$ be the event that there are two outermost clusters in $\Lc_D$ across the annulus $A_{\eps,r}$.
Define the truncated arm event with respect to the loop soup $\Lc_D$ by
\[
\overrightarrow\itwo(\eps,r;D) := \itwo(\eps,r;D) \cap \{ \Lambda(B_\eps, \Lc_D) \subseteq B_{2r} \}.
\]

\begin{lemma}[Locality]\label{lem:locality}
	For all $0<2\eps< r<1/2$, $D\supseteq B_{2r}$ and $\alpha\in (0,1/2]$,
	\[
	\Pb( \itwo(\eps,r;D) ) \lesssim \Pb( \overrightarrow\itwo(\eps,r;D) ).
	\]
\end{lemma}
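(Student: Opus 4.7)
The plan is a separation-then-extension argument, mirroring the derivation of the locality property for the random walk loop soup in \cite{GNQ1}. It has two steps: first, a cluster-level separation lemma shows that on $\itwo(\eps,r;D)$ the two outermost crossing clusters admit, with comparable probability, a genuine geometric separation near $\partial B_r$; second, a Poissonian bound based on the conformal and scale invariance of the Brownian loop measure shows that, conditional on this separated configuration, with positive probability no cluster meeting $B_\eps$ extends beyond $B_{2r}$.

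For Step 1, I define the sub-event $\mathcal{S}_r \subseteq \itwo(\eps,r;D)$ on which there exist $z_1,z_2 \in \partial B_r$ with $|z_1-z_2| \ge r/2$ such that the two outermost crossing clusters $C_1,C_2$ of $\Lc_D$ satisfy $C_i \cap A_{r/2,r} \subset B_{r/20}(z_i)$ for $i=1,2$. The aim is $\Pb(\itwo(\eps,r;D)) \lesssim \Pb(\mathcal{S}_r)$: a cluster-level analog of Lemma~\ref{lem:sep-jk}, proved by the template used for Lemma~\ref{lem:sl}. Namely, one works with the quality $Q_\eps$ from \eqref{eq:quality}, which depends only on $\Lc_{2r} \subset \Lc_D$ and is thus insensitive to $D$ outside $B_{2r}$; one shows inductively across dyadic scales that on $\itwo(\eps,r;D)$ one has $Q_\eps(\Lc_r) \ge u$ with probability $\gtrsim \Pb(\itwo(\eps,r;D))$, for some absolute $u>0$; and finally one converts the probabilistic event $\{Q_\eps(\Lc_r) \ge u\}$ into the geometric event $\mathcal{S}_r$ via the stability estimate for two-arm events developed later in this section. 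For Step 2, I condition on $\Lc_{3r/2}$ (which is independent of $\Lc_D \setminus \Lc_{3r/2}$) and restrict to the sub-event of $\mathcal{S}_r$ measurable with respect to this $\sigma$-algebra. A cluster of $\Lc_D$ meeting $B_\eps$ can exit $B_{2r}$ only through a chain of loops in $\Lc_D \setminus \Lc_{3r/2}$ joining $C_1 \cup C_2$ (necessarily near the separating points $z_i$) to the complement of $B_{2r}$. By conformal and scale invariance, the Brownian loop measure of loops not contained in $B_{3r/2}$ that cross $\partial B_{2r}$ is bounded by an absolute constant $M$, and a BK-type decomposition over chain extensions (in the spirit of \cite{GNQ1}) then bounds the probability of such an extension away from $1$. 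Hence on $\mathcal{S}_r$ the event $\overrightarrow\itwo(\eps,r;D)$ occurs with conditional probability at least some absolute $c>0$, and combining with Step 1 gives the claim.

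The hard step is Step 1. In Lemma~\ref{lem:sep-jk} the separating objects are Brownian endpoints, whose positions can be tracked and dispersed directly; here they are clusters, which are emergent random sets built from the whole loop configuration, so iterating endpoint separation is not available. The quality $Q$ of \eqref{eq:quality} is precisely the device that bypasses this difficulty, replacing a geometric separation by a hybrid geometric-probabilistic one in terms of good extension pairs of Brownian loops, which lends itself to a scale-by-scale inductive argument. Converting $\{Q \ge u\}$ into the geometric event $\mathcal{S}_r$ is the content of the stability estimate, and mirrors the outline of the proof of Lemma~\ref{lem:sl} given earlier in this section.
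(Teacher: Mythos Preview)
Your proposal has a genuine circularity problem. In Step~1 you invoke ``the template used for Lemma~\ref{lem:sl}'' to obtain the cluster-level separation $\Pb(\itwo(\eps,r;D)) \lesssim \Pb(\Qc(r)\ge u)$. But if you look at how Lemma~\ref{lem:sl} is actually proved in the paper, its very first line is ``By Lemma~\ref{lem:locality}, we have $\Pb(\itwo(\eps,r))\lesssim \Pb(\Ac(M))$.'' That is, the inductive scheme over dyadic scales (Lemmas~\ref{lem:quasi}--\ref{lem:sep-1} and the recursion \eqref{eq:QML1}) only compares \emph{local} arm events $\Ac(i)=\itwo(\eps,r_i;B_{r_{i+1}})$ to the quality $\Qc$, and the passage from the global event $\itwo(\eps,r;D)$ to a local one is precisely the content of the locality lemma you are trying to prove. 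You cannot bootstrap it from itself. Your own remark that $Q_\eps$ ``depends only on $\Lc_{2r}$ and is thus insensitive to $D$ outside $B_{2r}$'' identifies exactly the gap: the arm event $\itwo(\eps,r;D)$ \emph{does} depend on loops outside $B_{2r}$, and bridging that dependence is what locality is for.

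The paper's route is different and avoids the circularity. As stated explicitly just before Lemma~\ref{lem:locality}, locality (and quasi-multiplicativity) are derived \emph{from} Lemma~\ref{lem:sep-jk} and its variants in Remark~\ref{rmk:general-sl}, i.e.\ from the separation lemma for packets of \emph{Brownian motions} inside an independent BLS, not from the cluster-level separation. The idea (as in \cite{GNQ1}) is: on $\itwo(\eps,r;D)$, decompose the crossing loops into excursions, view the relevant pieces as Brownian paths, and apply Lemma~\ref{lem:sep-jk} (or the reversed/non-disconnection versions) to separate them at scale $r$; once separated, one can resample the exterior loops to confine $\Lambda(B_\eps,\Lc_D)$ inside $B_{2r}$ at constant cost. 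This uses only the Brownian-motion separation, which is proved independently of Lemma~\ref{lem:locality}, so there is no loop in the logic.
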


\begin{lemma}[Quasi-multiplicativity]
	For any $\alpha\in (0,1/2]$, there exists a constant $c(\alpha)>0$ such that for all $0 < r_1\le r_2/2 \le r_3/16$ and $D \supseteq B_{2r_3}$,
	\begin{align}\label{eq:2n-quasi-1}
		\Pb(  \itwo(r_1,r_3;D) ) \le c\, \Pb( \itwo(r_1,r_2;B_{2r_2}) )\, \Pb( \itwo(4r_2,r_3;D) ).
	\end{align}
\end{lemma}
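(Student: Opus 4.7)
The plan is to adapt the classical quasi-multiplicativity argument to the Brownian loop soup setting, in the spirit of \cite{GNQ1}. The starting point is the independent decomposition $\Lc_D = \Lc^{\mathrm{in}} \sqcup \Lc^{\mathrm{out}}$, where $\Lc^{\mathrm{in}}$ collects the loops of $\Lc_D$ contained in $B_{2r_2}$. By the Poisson point process structure of the BLS together with the restriction property of the Brownian loop measure, $\Lc^{\mathrm{in}}$ is distributed as $\Lc_{B_{2r_2}}$ and is independent of $\Lc^{\mathrm{out}}$.

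The first key observation is that loops in $\Lc^{\mathrm{in}}$, being contained in $B_{2r_2}$, cannot reach the annulus $A_{4r_2, r_3}$; hence any crossing of this annulus by a cluster of $\Lc_D$ is realized entirely by loops of $\Lc^{\mathrm{out}}$. It follows that $\itwo(r_1, r_3; D)$ implies an ``outer'' two-arm event that two distinct clusters of $\Lc^{\mathrm{out}}$ cross $A_{4r_2, r_3}$, which depends only on $\Lc^{\mathrm{out}}$. I would then apply the reversed-direction separation lemma for BLS (the analog of Proposition~4.8 of \cite{GNQ1}, available per Remark~\ref{rmk:general-sl}) to $\Lc^{\mathrm{out}}$ conditioned on this outer event: with probability bounded below by a universal constant, the two crossing clusters of $\Lc^{\mathrm{out}}$ land in well-separated regions on $\partial B_{5r_2/2}$ at macroscopic distance apart. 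Conditionally on such a well-separated outer configuration, for $\itwo(r_1, r_3; D)$ to hold there must in addition exist two disjoint extensions in $\Lc_D$ from the landing regions down to $\partial B_{r_1}$; by the independence of $\Lc^{\mathrm{in}}$ and by inclusion of this extension event into ``$\Lc_{B_{2r_2}}$ has two disjoint clusters crossing $A_{r_1, r_2}$,'' its conditional probability is at most $\Pb(\itwo(r_1, r_2; B_{2r_2}))$.

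Combining the above yields $\Pb(\itwo(r_1, r_3; D)) \lesssim \Pb(\itwo(r_1, r_2; B_{2r_2})) \cdot \Pb(\text{outer event at scales } 4r_2, r_3)$. The last step, and the main obstacle, is to compare this outer event with $\Pb(\itwo(4r_2, r_3; D))$: the inclusion $\itwo(4r_2, r_3; D) \subseteq \{\text{outer event}\}$ is immediate since adding $\Lc^{\mathrm{in}}$ can only merge clusters of $\Lc^{\mathrm{out}}$, never split them. For the converse up to constants, I would use separation in dual form: given a well-separated configuration realizing the outer event, the independent soup $\Lc^{\mathrm{in}} \stackrel{d}{=} \Lc_{B_{2r_2}}$ fails to merge the two outer clusters with probability bounded below by a universal constant. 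This reduces to a non-connection statement in $\Lc_{B_{2r_2}}$ between two macroscopically separated regions on $\partial B_{2r_2}$, which follows from the non-disconnection variant of the separation lemma (cf.~Remark~\ref{rmk:general-sl}) together with a one-arm-type estimate. This dual use of the separation lemma, once on $\Lc^{\mathrm{out}}$ to enable extension control, and once on $\Lc^{\mathrm{in}}$ to rule out merging, is the technical heart of the argument.
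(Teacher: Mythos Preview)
The paper itself omits the proof, deferring to the discrete argument in \cite{GNQ1}. Your outline is in the right spirit but has a genuine gap in Step~4. The claimed inclusion---that conditionally on a well-separated realization of $\Lc^{\mathrm{out}}$, the extension event ``two disjoint $\Lc_D$-connections from the landing regions down to $\partial B_{r_1}$'' is contained in ``$\Lc^{\mathrm{in}}$ has two disjoint clusters crossing $A_{r_1,r_2}$''---is false. A loop in $\Lc^{\mathrm{out}}$ is merely one not contained in $B_{2r_2}$; it can still dip deep into $B_{r_2}$, or even reach $\partial B_{r_1}$ directly. For such a realization of $\Lc^{\mathrm{out}}$ the extension holds with conditional probability $1$ irrespective of $\Lc^{\mathrm{in}}$, so the conditional probability over $\Lc^{\mathrm{in}}$ cannot be uniformly bounded by $\Pb(\itwo(r_1,r_2;B_{2r_2}))$. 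Separation of landing points on $\partial B_{5r_2/2}$ does nothing to prevent this.

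A related issue arises already in Step~3: the reversed separation lemma you invoke (the analog of Proposition~4.8 in \cite{GNQ1}, cf.\ Remark~\ref{rmk:general-sl}) is formulated for packets of Brownian motions running through an independent loop soup from a prescribed initial configuration, not for a bare cluster-crossing event in $\Lc^{\mathrm{out}}$. The route taken in \cite{GNQ1} bridges this via a Markovian exploration: one discovers the frontier loops of the two crossing clusters at the intermediate scale and treats their excursions as the Brownian paths $\bar x,\bar y$ in the setup of Lemma~\ref{lem:sep-jk} (or its reversed variant), with the already-explored portion playing the role of the initial configuration $(L_\eps,V_1,V_2)$. Crucially, one conditions only on what has been explored---not on the full $\Lc^{\mathrm{out}}$---so the unexplored loops remain an independent loop soup to which the separation lemma applies, and the decoupling into the two factors on the right of \eqref{eq:2n-quasi-1} then follows.
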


\subsection{Proof of the separation lemma for CLE}\label{subsec:proof_i_sep}
In what follows, we aim to prove Lemma~\ref{lem:sl} concerning the quality $Q_\eps(\Lc_s)$, and then use it to prove the main separation result, i.e.\ Proposition~\ref{prop:sep_i}. We begin by establishing some key ingredients, and we then complete the proof of separation in the end.

Let $M:=\lfloor\log_2(r/\eps)\rfloor$ ($\geq 1$ by assumption). Set $r_0:=\eps$. For all $1\le i\le M$, let $r_i:=2^i\eps$, and let $\Ac(i):= \itwo(\eps,r_i;B_{r_{i+1}})$ be the \emph{local arm event}. Let $\wt\Ac(i)\subseteq \Ac(i)$ be the \emph{stable arm event} that there are exactly two clusters $\Cc_1$ and $\Cc_2$ in $\Lc_{r_{i+1}}$ across the annulus $A_{\eps,r_i}$, and for each $j=1,2$,
\begin{equation}\label{eq:cond}
	\text{$\Cc_j$ contains a subcluster of $\Lc_{r_{i}}$ that crosses $A_{\eps,r_{i-1}}$.}
\end{equation}

The following lemma upper bounds $\Pb(\Ac(i))$ by a combination of $\Pb( \wt\Ac(i) )$ and $\Pb( \Ac(i-1) )$, which can be roughly regarded as a kind of stability result on arm events. This stability result is an additional technicality in the proof of Lemma~\ref{lem:sl}, compared to those we proved in \cite{GNQ1}. 
It turns out to be quite important to make the recursive procedure \eqref{eq:QML1} work (which is a standard step in the proof of separation lemma). 
It is worth putting this lemma in the first place in order to emphasize its importance. The other two ingredients Lemma~\ref{lem:sep-2} and Lemma~\ref{lem:sep-1} are more standard (although additional complexities emerge due to the convoluted definition of quality), which are indeed analogous to Lemma~4.5 and Lemma~4.4 in \cite{GNQ1}, respectively. 

\begin{figure}[t]
	\centering
	\includegraphics[width=0.56\textwidth]{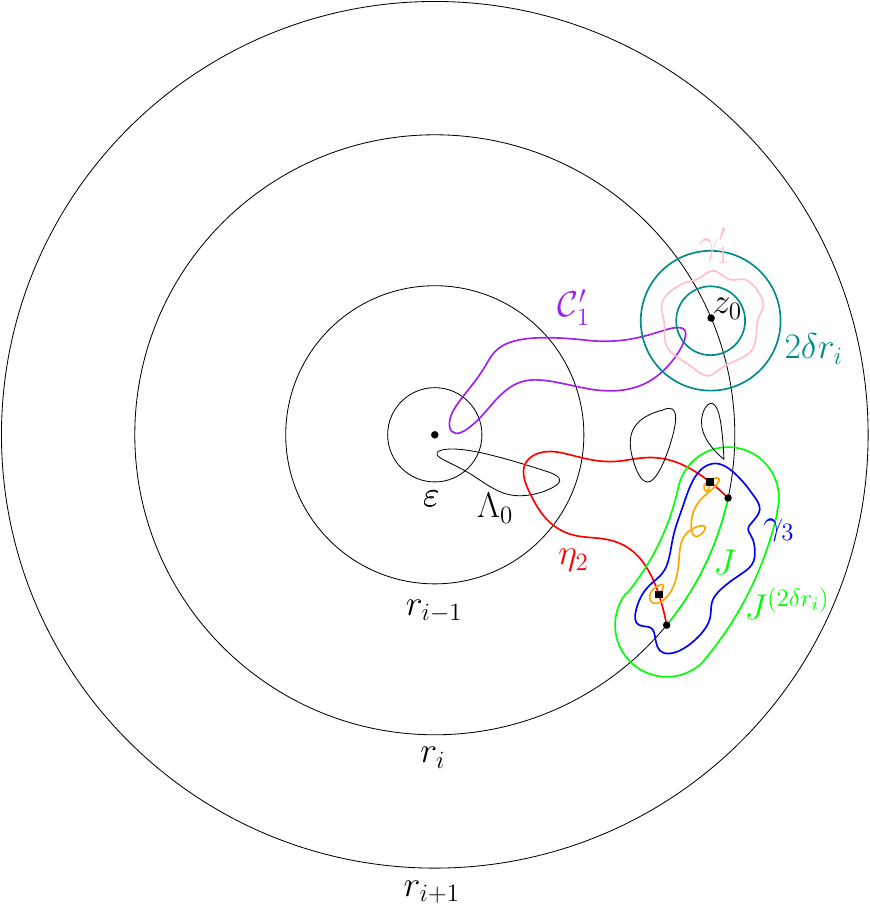}
	\caption{Case (2a) in the proof of Lemma~\ref{lem:quasi}. The subcluster $\Cc_1'$ is in purple and it intersects some ball $B_{\delta r_i}(z_0)$ with $z_0\in \partial B_{r_i}$ (small one in cyan). The larger ball (in cyan) centered at $z_0$ is of radius $2\delta r_i$. $\gamma_1'$ in pink is a loop inside $A_{\delta r_i,1.5\delta r_i}(z_0)$ that surrounds $B_{\delta r_i}(z_0)$. $\eta_2$ in red is an excursion part of the loop $\gamma_2$ in $B_{r_i}$ that intersects $\Lambda_0$. The distinguished arc $J$ that joins the endpoints of $\eta_2$ is in green, and its $(2\delta r_i)$-sausage is enclosed by the green loop. $\gamma_3$ is a loop in $J^{(2\delta r_i)}\setminus J^{(\delta r_i)}$ that surrounds $J^{(\delta r_i)}$. The two small squares represent the first hitting of $J^{(\delta r_i)}$ by $\eta_2$ from some point on $\eta_2\cap\Lambda_0$ in both directions. These two squares are connected by an orange bridge inside $J^{(2\delta r_i)}\cap B_{r_i}$, which is used to construct the new loop $\gamma_2'$. We also draw some clusters in black hooked by $\eta_2\cup J^{(2\delta r_i)}$ in $\Lc_{r_i}$.}
	\label{fig:quasi}
\end{figure}

\begin{lemma}\label{lem:quasi}
	There is a constant $c>0$ such that the following holds. 
	For all $\delta>0$, there exists a constant $b(\delta)>0$ such that for all $i\ge 2$, 
	\begin{equation}\label{eq:quasi}
		\Pb( \Ac(i) ) \le b(\delta)\, \Pb( \wt\Ac(i) ) + \delta^c\, \Pb( \Ac(i-1) ).
	\end{equation}
\end{lemma}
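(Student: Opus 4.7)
I would decompose the bad event $\Ac(i) \setminus \wt\Ac(i)$ into two cases. Case~(1): strictly more than two clusters in $\Lc_{r_{i+1}}$ cross $A_{\eps, r_i}$. Case~(2): exactly two clusters $\Cc_1, \Cc_2$ cross but condition \eqref{eq:cond} fails for at least one of them, say $\Cc_1$, meaning that every $\Lc_{r_i}$-subcluster of $\Cc_1$ meeting $B_\eps$ is contained in $B_{r_{i-1}}$. Case~(1) can be absorbed into the $b(\delta)\Pb(\wt\Ac(i))$ term via a BK-type disjoint-occurrence inequality for the BLS, in the spirit of the argument already used in the proof of Lemma~\ref{lem:2arm_good}, combined with the trivial upper bound $\Pb(\itwo(\eps,r_i))\le c<1$ that makes a third crossing cluster rare relative to a typical two-cluster configuration.

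For Case~(2), I would split further according to the proximity of the $\Lc_{r_i}$-subclusters of $\Cc_1$ to $\partial B_{r_i}$. Subcase~(2a): there is an $\Lc_{r_i}$-subcluster $\Cc_1'\subseteq \Cc_1$ which reaches within distance $\delta r_i$ of some point $z_0\in \partial B_{r_i}$. Subcase~(2b): every $\Lc_{r_i}$-subcluster of $\Cc_1$ stays at distance at least $\delta r_i$ from $\partial B_{r_i}$. In Subcase~(2a) I would perform the local gluing depicted in Figure~\ref{fig:quasi}: inserting an auxiliary loop $\gamma_1'$ in $A_{\delta r_i, 1.5\delta r_i}(z_0)$ hooks $\Cc_1'$, turning it into an $\Lc_{r_i}$-subcluster of $\Cc_1$ which now reaches $\partial B_{r_i}$ and, combined with the inner piece inside $B_{r_{i-1}}$, crosses $A_{\eps, r_{i-1}}$; a companion modification on the $\Cc_2$-side, based on an excursion $\eta_2$ in $B_{r_i}$ of a loop of $\Cc_2$, a distinguished arc $J\subset \partial B_{r_i}$ joining the endpoints of $\eta_2$, an auxiliary loop $\gamma_3$ in the sausage $J^{(2\delta r_i)} \setminus J^{(\delta r_i)}$, and a short bridge producing a new loop $\gamma_2'$, yields \eqref{eq:cond} for $\Cc_2$ as well, so that the resulting configuration lies in $\wt\Ac(i)$. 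The Brownian loop-soup mass of each of the required annular and sausage loops is uniformly bounded below once $\delta$ is fixed, and a separation-type argument (Lemma~\ref{lem:sep-jk}) prevents the modification from merging $\Cc_1$ with $\Cc_2$ or with any outer cluster; together these produce the prefactor $b(\delta)$ in the first term.

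In Subcase~(2b), since no $\Lc_{r_i}$-subcluster of $\Cc_1$ reaches within $\delta r_i$ of $\partial B_{r_i}$, the crossing of $\Cc_1$ to $\partial B_{r_i}$ is realised entirely through loops of $\Lc_{r_{i+1}}\setminus \Lc_{r_i}$ that must bridge the annular shell $A_{r_i-\delta r_i, r_i}$; the inner $\Lc_{r_i}$-configuration then already realises $\Ac(i-1)$ (with $r_{i-1}=r_i/2$), up to a locality truncation handled by Lemma~\ref{lem:locality}, and conditionally on this inner configuration the probability of producing the required outer extension in the thin shell is bounded by $\delta^c$ for some $c>0$, via a standard mass estimate for BLS loops crossing that shell together with the independence of $\Lc_{r_{i+1}}\setminus \Lc_{r_i}$ from $\Lc_{r_i}$. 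Summing the two cases then yields the claimed bound. The main obstacle will be the gluing in Subcase~(2a): the construction has to simultaneously repair $\Cc_1$ and $\Cc_2$ to satisfy \eqref{eq:cond} without creating an unwanted merger, while remaining compatible with the $\delta$-good/$\delta$-separated framework used later in Lemma~\ref{lem:sl}; this is exactly the role played by the excursion $\eta_2$, the arc $J$, and the controlled sausage construction shown in Figure~\ref{fig:quasi}.
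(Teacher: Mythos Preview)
Your decomposition has the roles of the two clusters reversed relative to the paper and Figure~\ref{fig:quasi}, and this is not just a labelling issue. In the paper's Case~(2), condition~\eqref{eq:cond} \emph{holds} for $\Cc_1$ and \emph{fails} for $\Cc_2$; the subcluster $\Cc_1'$ in the figure already crosses $A_{\eps,r_{i-1}}$, and the loop $\gamma_1'$ is only needed to push it the last $\delta r_i$ outward to $\partial B_{r_i}$. All the real work is on the failing side, where the excursion $\eta_2$ of $\gamma_2$ is truncated and bridged into a new loop $\gamma_2'\subset B_{r_i}$, so that $\gamma_2'$ is genuinely in $\Lc_{r_i}$ and its $\Lc_{r_i}$-cluster crosses $A_{\eps,r_{i-1}}$. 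In your Subcase~(2a) you take $\Cc_1$ to be the failing cluster and try to repair it by inserting $\gamma_1'$ in $A_{\delta r_i,1.5\delta r_i}(z_0)$; but since $z_0\in\partial B_{r_i}$, that loop is not contained in $B_{r_i}$, hence is not in $\Lc_{r_i}$, and so it cannot produce an $\Lc_{r_i}$-subcluster of $\Cc_1$ crossing $A_{\eps,r_{i-1}}$. Your surgery therefore does not land in $\wt\Ac(i)$.

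The gap in Subcase~(2b) is more fundamental. The mass of loops in $\Lc_{r_{i+1}}\setminus\Lc_{r_i}$ that enter $B_{(1-\delta)r_i}$ is bounded away from zero as $\delta\to 0$ (by scale invariance it converges to a positive constant), so the ``extension through the thin shell'' is not a $\delta^c$-rare event. In the paper the $\delta^c$ factor comes from an entirely different mechanism: the geometry of Case~(2b) forces the existence of a point $z\in\partial B_{r_i}$ (or $w\in J$) such that $B_{2\delta r_i}(z)$ is not surrounded by any loop of $\Lc_{r_{i+1}}$, and it is this non-surrounding event $H(z,w)$ that has probability $\le\delta^c$, via independent surrounding attempts in $\asymp\log(1/\delta)$ dyadic annuli. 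Finally, your claim that the inner $\Lc_{r_i}$-configuration already realises $\Ac(i-1)$ is not justified: if \eqref{eq:cond} fails for $\Cc_1$ then no $\Lc_{r_i}$-subcluster of $\Cc_1$ crosses $A_{\eps,r_{i-1}}$, so at most one crossing $\Lc_{r_i}$-cluster is present. In the paper this second crossing at scale $r_{i-1}$ is supplied by $\Lc_{r_i}\cup\{\gamma_2\}$ (using that $\Cc_1'$ already crosses and that $\gamma_2$ dips into $\Lambda_0$), and a separate resampling argument in the spirit of Lemma~\ref{lem:locality} is needed to convert this into a genuine $\Ac(i-1)$-configuration.
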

\begin{proof}
	Let $\bar\Ac(i)\subseteq \Ac(i)$ be the
	event that there are exactly two crossing clusters of $A_{\eps,r_i}$ in $\Lc_{r_{i+1}}$. By the BK inequality (similar to \eqref{eq:bk1}), we have $\Pb( \Ac(i) ) \lesssim \Pb( \bar\Ac(i) ) $. Hence, it suffices to show \eqref{eq:quasi} for $\bar\Ac(i)$ in place of $\Ac(i)$.
	First, we explore all the loops in $\Lc_{r_i}$, and let $\Lambda_0:=\Lambda(B_\eps, \Lc_{r_i})$. On the event $\bar\Ac(i)$, we can find two loops $\gamma_1$ and $\gamma_2$ in $\Lc_{r_{i+1}}$ that both intersect $\Lambda_0$ and $\partial B_{r_i}$, and they belong to two different crossing clusters in $\Lc_{r_{i+1}}$, which are denoted by $\Cc_1$ and $\Cc_2$ below. For each $j=1,2$, we check if $\Cc_j$ satisfies the condition \eqref{eq:cond}. 
	
	\textbf{Case (1)}.
	We first consider the case that both $\Cc_1$ and $\Cc_2$ do not satisfy \eqref{eq:cond}. 
	Given $\Cc_1$ and $\Cc_2$, all the other clusters in $\Lc_{r_{i+1}}$ that intersect $B_\eps$ are contained in $B_{r_{i-1}}$ with some positive probability, which is assumed now.  
	Then, we have $\Lambda_0\subset B_{r_{i-1}}$, and $\gamma_1$ and $\gamma_2$ are a pair of non-intersecting loops that intersect $\Lambda_0$ and cross $A_{r_{i-1},r_i}$. 
	In fact, this is just a continuous analogue of the discrete event $\bar \Ec_2$, which we already dealt with in Lemma~5.8 of \cite{GNQ1}.  
	Using a similar Markovian exploration therein with  Lemma~\ref{lem:sep-jk} as an input now, we can show the result as follows (we refer the reader to the proof of Lemma~5.8 of \cite{GNQ1} for details). With a positive cost, one can modify the loops $\gamma_1$ and $\gamma_2$ such that they are contained in $B_{r_i}$, well-separated at scale $1.5\,r_{i-1}$, and they retain the connection (i.e., the refreshed crossing clusters in $\Lc_{r_i}$ containing them are disjoint and cross $A_{\eps,r_{i-1}}$). After this modification, we can extend the two crossing clusters to scale $r_i$ by using two loops in $\Lc_{r_{i+1}}$ that intersect $\partial B_{r_i}$. This implies $\wt\Ac(i)$, and it only costs a positive constant because the refreshed crossing clusters are well-separated now. We conclude $\Pb( \bar\Ac(i) )\lesssim \Pb( \wt\Ac(i) )$ in this case.
	
	\textbf{Case (2)}.
	Next, we consider the case that \eqref{eq:cond} is true for $\Cc_1$ but not for $\Cc_2$. We can further assume that there is a subcluster $\Cc'_1$ of $\Lc_{r_i}$ that crosses $A_{\eps,r_{i-1}}$ with $\dist(\Cc'_1,\partial B_{r_i})\le \delta r_i$, and hooked by $\gamma_1$. Otherwise, we are in a similar situation as case (1): two loops crossing $A_{(1-\delta)r_i,r_i}$ that make clusters in $B_{(1-\delta)r_i}$ connect to $\partial B_{r_i}$. Note that the multiplicative constant now depends on $\delta$, which is allowed. Next, we choose some $z_0\in \partial B_{r_i}$ such that $B_{\delta r_i}(z_0) \cap \Cc'_1\neq\emptyset$. Let $\eta_2$ (chosen in arbitrary way) be an excursion of $\gamma_2$ in $B_{r_i}$ such that it intersect $\Lambda_0$. Let $J$ be the arc on $\partial B_{r_i}$ that joins the endpoints of $\eta_2$ and $J\cup \eta_2$ does not surround $\Cc'_1$. Let $J^{(a)}:=\{w: \dist(w,J)\le a\}$ be the \emph{$a$-sausage} of $J$. We will consider the following two subcases:
	\begin{itemize}
		\item \textbf{Case (2a)}: There does \emph{not} exist a ball $B_{2\delta r_i}(z)$ with $z\in \partial B_{r_i}$ such that $B_{2\delta r_i}(z)\cap\Cc_1'\neq\emptyset$ and $(\Cc_1'\cup B_{2\delta r_i}(z))\cap \Lambda(\eta_2\cup J^{(2\delta r_i)}, \Lc_{r_i})\neq\emptyset$. Since $B_{\delta r_i}(z_0) \cap \Cc'_1\neq\emptyset$ by our choice of $z_0$, it follows that $(\Cc_1'\cup B_{2\delta r_i}(z_0))\cap \Lambda(\eta_2\cup J^{(2\delta r_i)}, \Lc_{r_i})=\emptyset$. See Figure~\ref{fig:quasi} for an illustration. Therefore, with some positive cost depending only on $\delta$, we can make all the following happen: 
		\begin{itemize}
			\item There is a loop $\gamma_1'$ in $A_{\delta r_i,1.5\delta r_i}(z_0):=B_{1.5\delta r_i}(z_0)\setminus B_{\delta r_i}(z_0)$ that surrounds $B_{\delta r_i}(z_0)$.
			\item There is a loop $\gamma_3$ in $J^{(2\delta r_i)}\setminus J^{(\delta r_i)}$ that surrounds $J^{(\delta r_i)}$.
			\item We truncate $\eta_2$ when it first hits $J^{(\delta r_i)}$ from some point on $\eta_2\cap\Lambda_0\subset B_{r_{i-1}}$ in both directions, and then hook the two hitting points inside $J^{(2\delta r_i)}\cap B_{r_i}$ by a bridge to get a new loop $\gamma'_2$. Note that $\gamma'_2\subset B_{r_i}$ makes $\Ac(i-1)$ occur, and $\gamma'_2\cap\gamma_3\neq\emptyset$ makes $\Ac(i)$ occur.
		\end{itemize}
		By requiring all the other clusters made by the remaining unexplored loops in $\Lc_{r_{i+1}}$ to have diameter smaller than $\delta r_i/100$ (which occurs with positive probability depending only on $\delta$), we conclude $\Pb( \bar\Ac(i) ) \le b(\delta)\, \Pb( \wt\Ac(i) )$ in this case. 
		\item \textbf{Case (2b)}: There is a ball $B_{2\delta r_i}(z)$ with $z\in \partial B_{r_i}$ such that $B_{2\delta r_i}(z)\cap\Cc_1'\neq\emptyset$ and $(\Cc_1'\cup B_{2\delta r_i}(z))\cap \Lambda(\eta_2\cup J^{(2\delta r_i)}, \Lc_{r_i})\neq\emptyset$. 
		In this case, there exists a point $w\in J$, measurable with respect to $\Lc_{r_i}\cup\{\gamma_2\}$, such that the following event occurs
		\[
		H(z,w):=\{\text{either $B_{2\delta r_i}(z)$ or $B_{2\delta r_i}(w)$ is not surrounded by any loop in $\Lc_{r_{i+1}}$}\}.
		\]
		To be more precise, we will choose the previous possible $z,w$ to be the ones that are closest to the rightmost point $(r_i,0)$. 
		On the one hand, we have the uniform estimate that $\Pb(H(z,w))\le \delta^c$ for some constant $c>0$ (by considering surrounding loops inside dyadic annuli). On the other hand, we note that $\Lc_{r_i}\cup\{\gamma_2\}$ already makes the occurrence of two crossing clusters of $A_{\eps,r_{i-1}}$, and hence, by slightly adapting the proof of Lemma~\ref{lem:locality}, we can resample the loops in $\Lc_{r_i}\cup\{\gamma_2\}$ with some positive cost such that $\Ac(i-1)$ happens. Combined, we obtain an upper bound $\delta^c\, \Pb( \Ac(i-1) )$ in this case.
	\end{itemize}
	Combining the estimates in all the above cases, we conclude the proof of Lemma~\ref{lem:quasi}. 
\end{proof}

Recall that the extension probability $m_s(\delta;\eps,L_s)$ and the quality $Q_\eps(L_s)$ were defined in \eqref{eq:m_s} and \eqref{eq:quality}, respectively.
In what follows, $\eps$ is considered as fixed, so we drop it from the notation, and we write $m_s(\delta):=m_s(\delta; \eps, \Lc_s)$ and  $\Qc(s):=Q_\eps(\Lc_s)$ for any $\eps\le s\le r$, for simplicity. Note that both $m_s(\delta)$ and $\Qc(s)$ are measurable with respect to the loop soup $\Lc_s$ in $B_s$.

\begin{lemma}\label{lem:sep-2}
	For all $\theta>0$, there exists $\rho(\theta)>0$ such that for all $i$,
	\[
	\Pb( 0<\Qc(r_i)< \rho, \wt\Ac(i) ) \le \theta\, \Pb( \Ac(i-1) ).
	\]
\end{lemma}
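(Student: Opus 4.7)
The plan is to condition on $\Lc_{r_i}$ and reduce $\Pb(\wt\Ac(i)\mid\Lc_{r_i})$ to an extension probability of the same type that enters the definition of the quality in \eqref{eq:quality}. Note first that $\wt\Ac(i)\subseteq \Ac(i-1)$ by the very definition of $\wt\Ac(i)$ (each of the two crossing clusters in $\Lc_{r_{i+1}}$ contains a subcluster of $\Lc_{r_i}$ that crosses $A_{\eps,r_{i-1}}$), and $\Ac(i-1)$ is measurable with respect to $\Lc_{r_i}$. Since $\Qc(r_i)$ is also measurable with respect to $\Lc_{r_i}$, one may write
$$\Pb\bigl(0<\Qc(r_i)<\rho,\, \wt\Ac(i)\bigr) = \Eb\bigl[\Pb(\wt\Ac(i)\mid\Lc_{r_i})\,\ind\{0<\Qc(r_i)<\rho,\, \Ac(i-1)\}\bigr].$$

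The heart of the argument is the following claim: there exist universal constants $\delta_0, C>0$ such that, for every configuration $\Lc_{r_i}$ realizing $\Ac(i-1)$,
$$\Pb(\wt\Ac(i)\mid \Lc_{r_i}) \le C\, m_{r_i}(\delta_0;\eps,\Lc_{r_i}).$$
To see this, we apply a reverse separation lemma, in the spirit of Remark~\ref{rmk:general-sl} and Lemma~\ref{lem:sep-jk}, to the loops in $\Lc_{r_{i+1}}\setminus \Lc_{r_i}$ that realize the two crossings of $A_{r_i,r_{i+1}}$ on $\wt\Ac(i)$. Concretely, conditionally on $\wt\Ac(i)$ and $\Lc_{r_i}$, with probability bounded away from zero (uniformly in $\Lc_{r_i}$) we can add a bounded number of Brownian loops in a dyadic sub-annulus sitting just outside $\partial B_{r_i}$ (using the independence of the loop soup at disjoint scales) so as to enforce $\delta_0$-separation at $\partial B_{r_i}$ in the sense of \eqref{eq:poe}. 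The resulting pair of loops in $\Lc_{r_{i+1}}\setminus\Lc_{r_i}$ forms a $\delta_0$-good pair, and the outermost/disjointness requirements built into $m_{r_i}$ are automatic since $\wt\Ac(i)$ forces there to be exactly two crossing clusters of $A_{\eps,r_i}$ in $\Lc_{r_{i+1}}$.

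Combining the claim with the key monotonicity $m_{r_i}(\delta_0;\eps,\Lc_{r_i}) \le m_{r_i}(\rho;\eps,\Lc_{r_i}) < \rho$, valid whenever $\rho\le\delta_0$ and $\Qc(r_i)<\rho$ (by the definition \eqref{eq:quality} and the fact that $\delta\mapsto m_s(\delta)$ is decreasing), we obtain
$$\Pb\bigl(0<\Qc(r_i)<\rho,\, \wt\Ac(i)\bigr) \le C\rho\, \Pb(\Ac(i-1)).$$
Choosing $\rho:=\min(\delta_0, \theta/C)$ then yields the desired inequality.

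The main obstacle is the uniform separation claim displayed above. The complication, compared with the separation lemmas proved earlier for Brownian motions inside an independent loop soup, is that here the extensions are carried out by \emph{loop-soup clusters themselves}, so the resampling that forces separation at $\partial B_{r_i}$ must simultaneously preserve the two-cluster structure of $\wt\Ac(i)$ and produce a pair satisfying the hybrid geometric/probabilistic condition \eqref{eq:poe}. This is precisely the kind of adaptation that motivated the cluster-based notion of quality introduced in \eqref{eq:quality}, and the argument combines the Brownian separation lemma (Lemma~\ref{lem:sep-jk}) with the admissibility condition present in the definition of $m_{r_i}$.
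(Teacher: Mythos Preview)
Your reduction to conditional expectations and the observation that $\wt\Ac(i)\subseteq\Ac(i-1)$ (so that everything factors through $\Lc_{r_i}$) is exactly right, and matches the paper. The gap is in the displayed claim
\[
\Pb(\wt\Ac(i)\mid \Lc_{r_i}) \le C\, m_{r_i}(\delta_0;\eps,\Lc_{r_i})
\]
for a \emph{fixed} universal $\delta_0$. Your justification, ``add a bounded number of Brownian loops in a dyadic sub-annulus sitting just outside $\partial B_{r_i}$ \ldots\ so as to enforce $\delta_0$-separation in the sense of \eqref{eq:poe}'', cannot work as stated: the condition \eqref{eq:poe} is a statement about the fixed configuration $\Lc_{r_i}$ and the positions of the excursion endpoints on $\partial B_{r_i}$; loops added outside $B_{r_i}$ are simply invisible to it. Resampling the excursions themselves inside $B_{r_i}$ does not help either, because for adversarial $\Lc_{r_i}$ there may be \emph{no} $\delta_0$-separated admissible pair at all. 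For instance, a single cluster of $\Lc_{r_i}$ that comes within $\delta_0 r_i/100$ of every point of $\partial B_{r_i}$ (while staying away from $B_{r_{i-1}}$) forces $m_{r_i}(\delta_0)=0$, yet $\Pb(\wt\Ac(i)\mid\Lc_{r_i})$ can still be positive since loops in $\Lc_{r_{i+1}}\setminus\Lc_{r_i}$ may thread through the tiny gaps. So the claimed inequality is false in general, and the reverse separation lemmas of Remark~\ref{rmk:general-sl} do not cover this situation: those results separate Brownian motions inside a loop soup that is being sampled jointly with them, whereas here the obstructing configuration $\Lc_{r_i}$ is frozen.

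The paper's proof bypasses this obstacle by a dichotomy rather than a uniform separation bound. One splits according to whether the pair $(\gamma_1,\gamma_2)$ realizing $\wt\Ac(i)$ is $\rho$-good or not. If it is, the definition of $m_{r_i}$ and the hypothesis $\Qc(r_i)<\rho$ (hence $m_{r_i}(\rho)<\rho$) give the bound $\rho$ directly. If it is not $\rho$-good, then the failure of \eqref{eq:poe} pins down an endpoint $z\in\partial B_{r_i}$ near which a cluster of $\Lc_{r_i}$ links the two sides; any loop in $\Lc_{r_{i+1}}$ surrounding $B_{\rho r_i}(z)$ would then merge the two clusters and destroy $\wt\Ac(i)$. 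Thus $\wt\Ac(i)$ forces the non-existence of such a surrounding loop, which costs at most $\rho^c$ by a standard dyadic-annulus estimate (giving $4\rho^c$ after accounting for the four endpoints). Summing the two cases yields $\theta=4\rho^c+\rho$. The surrounding-loop argument in the ``not $\rho$-good'' case is the missing ingredient in your approach.
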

\begin{proof}
	We first sample $\Lc_{r_i}$ such that both $\Ac(i-1)$ and $0<\Qc(r_i)<\rho$ occur. To realize $\wt\Ac(i)$, there exists a $\delta$-good pair of loops $(\gamma_1,\gamma_2)$ inside the collection $\Lc_{r_{i+1}}\setminus \Lc_{r_i}$ for some $\delta>0$, which extend two clusters in $\Lc_{r_i}$ across $A_{\eps,r_{i-1}}$ to $\partial B_{r_i}$ disjointly. 
	We first assume that $(\gamma_1,\gamma_2)$ is not $\rho$-good at scale $r_i$, that is, \eqref{eq:poe} fails to hold for any pair of excursions $(\eta_1,\eta_2)$ in $B_{r_i}$ extracted from $(\gamma_1,\gamma_2)$. As we already argued in case (2b) of the proof of Lemma~\ref{lem:quasi}, the probability that $B_{\rho r_i}(z)$ with $z\in \partial B_{r_i}$ is not surrounded by a loop in $\Lc_{r_{i+1}}$ is bounded by $\rho^c$. Hence, the probability that the previous event holds for all endpoints $z$ of $\eta_1$ and $\eta_2$ (which is implied by $\wt\Ac(i)$) is uniformly bounded by $4\rho^c$, which gives the desired upper bound in this case.   
	On the other hand, since $\Qc(r_i)<\rho$, we have $m_{r_i}(\rho)<\rho$ by \eqref{eq:quality}. Therefore, the probability of existence of a $\rho$-good pair of loops is smaller than $\rho$. This finishes the proof, by picking $\theta=4\rho^c+\rho$.
\end{proof}

\begin{lemma}\label{lem:sep-1}
	There is a constant $u>0$ such that the following holds. For any $\rho>0$, there is a constant $v(\rho)>0$ such that for all $i$,
	\[
	\Pb( \Qc(r_{i+1})\ge u ) \ge v(\rho)\, \Pb( \Qc(r_i)\ge \rho ).
	\]
\end{lemma}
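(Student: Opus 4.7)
\textbf{Proof plan for Lemma~\ref{lem:sep-1}.} The aim is to establish the pointwise bound
\[
\Pb\bigl(\Qc(r_{i+1}) \geq u \,\big|\, \Lc_{r_i}\bigr) \geq v(\rho)\,\one_{\{\Qc(r_i) \geq \rho\}},
\]
from which the statement of the lemma follows by taking expectation. On the event $\{\Qc(r_i) \geq \rho\}$, the defining property $m_{r_i}(\rho;\eps,\Lc_{r_i}) \geq \rho$ gives that, conditionally on $\Lc_{r_i}$, with probability at least $\rho$ over $\Lc^{(i,i+1)} := \Lc_{r_{i+1}} \setminus \Lc_{r_i}$ there exists a $\rho$-good pair $(\gamma_1, \gamma_2) \subset \Lc^{(i,i+1)}$ whose associated clusters $\Cc_1, \Cc_2$ in $\Lc_{r_{i+1}}$ are disjoint and outermost. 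This will be the first of three multiplicative contributions.

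The central step is to combine the previous event with a ``steering'' event of positive probability depending on $\rho$, which produces a realization of $\Lc_{r_{i+1}}$ forcing universal quality at the next scale. Writing $x_j, y_j \in \partial B_{r_i}$ for the endpoints of the $\rho$-separated excursion $\eta_j$ of $\gamma_j$ in $B_{r_i}$, fix once and for all two arcs $I_1, I_2 \subset \partial B_{r_{i+1}}$ of universal size, at universal mutual distance, and choose two disjoint tubes $T_1, T_2 \subset A_{r_i, r_{i+1}}$, each of width comparable to $\rho r_i$, with $T_j$ connecting a neighborhood of $\{x_j, y_j\}$ to $I_j$. Consider the additional event $\Fc$ that: (i) inside each $T_j$ there is a chain of loops in $\Lc^{(i,i+1)}$ linking $\Cc_j$ to $I_j$; (ii) outside $T_1 \cup T_2$, no loop in $\Lc^{(i,i+1)}$ has diameter larger than a small universal fraction of $r_{i+1}$. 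Standard chain-of-loops lower bounds for the BLS, together with the spatial independence provided by the Poisson point process structure of the loop soup when restricted to disjoint subregions of $A_{r_i,r_{i+1}}$, then yield that $\Fc$ has conditional probability at least $c(\rho) > 0$.

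On the conjunction of the two events above, $\Cc_1$ and $\Cc_2$ remain disjoint outermost clusters in $\Lc_{r_{i+1}}$, each possessing a ``finger'' reaching $I_j$ while all other clusters remain small near $\partial B_{r_{i+1}}$. For any such realization of $\Lc_{r_{i+1}}$ we claim that $m_{r_{i+1}}(u;\eps,\Lc_{r_{i+1}}) \geq u$ for some universal $u > 0$. Indeed, with universal probability over the independent $\Lc^{(i+1,i+2)}$ one can produce, for each $j=1,2$, a loop $\gamma_j' \in \Lc^{(i+1,i+2)}$ whose excursion $\eta_j'$ in $B_{r_{i+1}}$ stays in a small neighborhood of $I_j$, hooks onto the finger of $\Cc_j$, and has endpoints localized near $I_j$; adding the further (universal-probability) requirement that no other loop of $\Lc^{(i+1,i+2)}$ crosses $A_{r_{i+1},r_{i+2}}$, one checks that $(\gamma_1', \gamma_2')$ is a $u$-good pair at scale $r_{i+1}$ whose clusters are disjoint outermost in $\Lc_{r_{i+2}}$, for some universal $u > 0$. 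Thus $\Qc(r_{i+1}) \geq u$. Multiplying the three contributions $\rho$, $c(\rho)$, and $1$ together yields $v(\rho) > 0$ as required.

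The main obstacle is the measurability/independence bookkeeping in the steering step: one must ensure that the existence of the $\rho$-good pair and the occurrence of $\Fc$ can be combined, although the pair $(\gamma_1, \gamma_2)$ and the tubes $T_j$ (chosen to start near the realized endpoints of $\eta_1, \eta_2$) are not a priori independent. This is handled by fixing a deterministic selection rule for $(\gamma_1, \gamma_2)$ among candidate $\rho$-good pairs, using that, once this pair is specified, the loops of $\Lc^{(i,i+1)}$ disjoint from $\gamma_1 \cup \gamma_2$ still form a tractable Poissonian configuration with enough spatial freedom inside $T_1, T_2$ and in the complement of $T_1 \cup T_2$, in the spirit of analogous arguments developed for the discrete counterpart in \cite{GNQ1}.
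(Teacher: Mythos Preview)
Your overall architecture is right --- condition on $\Lc_{r_i}$ with $\Qc(r_i)\ge\rho$, spend the $\rho$ to produce a $\rho$-good pair $(\gamma_1,\gamma_2)$ in $\Lc^{(i,i+1)}$, and then steer towards universal quality at scale $r_{i+1}$ --- but the steering step has a genuine gap. The excursion $\eta_j$ is only the part of $\gamma_j$ inside $B_{r_i}$; the complementary arc $\gamma_j\setminus\eta_j$ lies in $A_{r_i,r_{i+1}}$ (and possibly winds around), and nothing in the definition of a $\rho$-good pair controls its geometry. Your event $\Fc$ acts on the loops of $\Lc^{(i,i+1)}$ \emph{other than} $\gamma_1,\gamma_2$: you add chains inside the tubes and force small diameter outside the tubes. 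But $\gamma_1,\gamma_2$ themselves are large loops in $\Lc^{(i,i+1)}$, and their annular parts may lie far outside $T_1\cup T_2$, may come arbitrarily close to each other, and may intersect both tubes. So either $\Fc$(ii) is violated by $\gamma_1,\gamma_2$ themselves, or, if you exempt them from $\Fc$(ii), their uncontrolled annular arcs can merge with the tube chains (destroying disjointness of $\Cc_1,\Cc_2$) and in any case prevent you from concluding that the configuration near $\partial B_{r_{i+1}}$ is universally nice. Your final paragraph on measurability does not address this, because the problem is not the independence of the \emph{rest} of the loop soup from $(\gamma_1,\gamma_2)$, but the lack of control on $(\gamma_1,\gamma_2)$ themselves.

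The paper's proof handles exactly this point by \emph{resampling} $\gamma_j$ rather than adding loops around it. Concretely, it distinguishes two cases according to whether the endpoints $x_j,y_j$ of $\eta_j$ are close ($|x_j-y_j|<\rho r_i/4$, so $\eta_j$ sits in a small ball $D$) or far apart. In the first case $\gamma_j$ is replaced, at a cost given by a ratio of Brownian loop masses, by a loop in a thin annulus $D'\setminus D$ surrounding $D$; in the second case $\gamma_j$ is replaced by $\eta_j$ closed up through two narrow cones reaching out to $\partial B_{1.5 r_i}$ and a bridge along that circle. In both cases the new loop $\gamma_j'$ has fully controlled geometry in $A_{r_i,r_{i+1}}$, the two replacements can be made well separated, and the cost is a constant depending only on $\rho$. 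Only after this replacement does the paper impose the remaining constraints (clusters in the cones stay in slightly larger cones; all other clusters touching $\partial B_{r_i}$ are small) and combine them via FKG to get $m_{r_{i+1}}(u_1)\ge u_2$. The resampling step is the missing idea in your argument; without it you cannot tame $\gamma_j\setminus\eta_j$, and the steering event $\Fc$ is not compatible with the good-pair event.
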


\begin{figure}[t]
	\centering
	\subfigure{\includegraphics[width=0.67\textwidth]{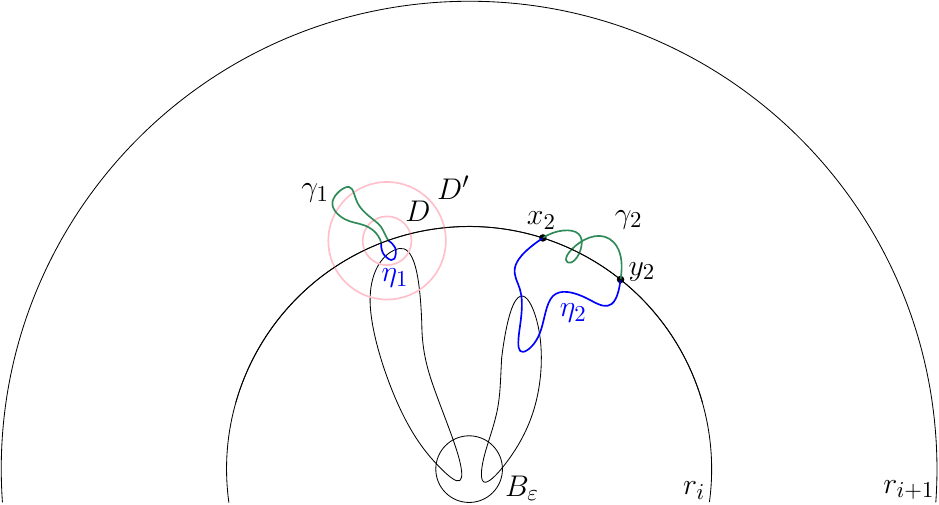}}
	\subfigure{\includegraphics[width=0.67\textwidth]{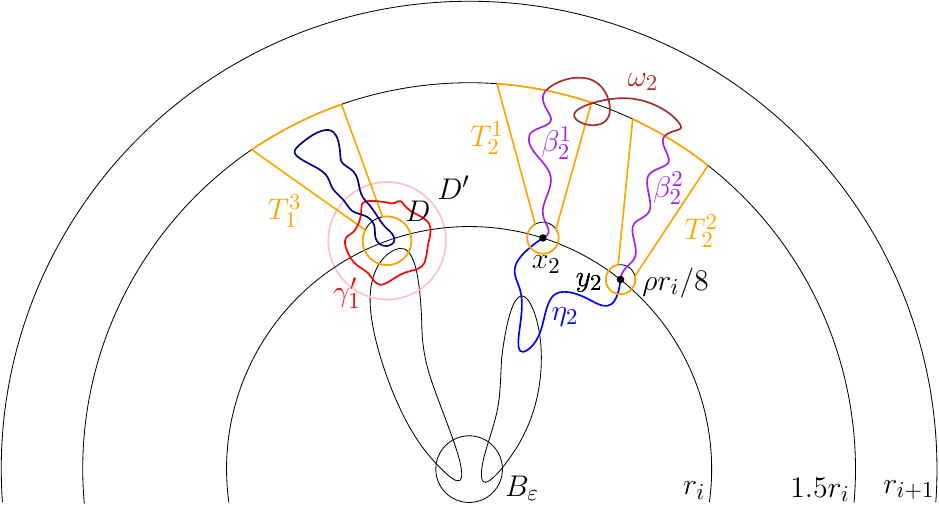}}
	\caption{Proof of Lemma~\ref{lem:sep-1}. \emph{Top:} Illustration of the case $\gamma_1$ in case (1) and $\gamma_2$ in case (2). The $\rho$-separated admissible pair of excursions $(\eta_1,\eta_2)$ is in blue. $D$ and $D'$ are the smaller ball and the larger ball in pink respectively. \emph{Bottom:} The configuration after resampling. $\gamma'_1$ is in red, $\beta_2^1$ and $\beta_2^2$ are in purple, and $\omega_2$ is in brown. The cones with the associated balls are in orange.}
	\label{fig:sep}
\end{figure}

\begin{proof}
	Consider $\Lc_{r_i}$ such that $\Qc(r_i)\ge \rho$, that is, $m_{r_i}(\rho)\ge \rho$ by \eqref{eq:quality}. Hence, with probability at least $\rho$, there exists a $\rho$-good pair of loops $(\gamma_1,\gamma_2)$ in $\Lc_{r_{i+1}}\setminus \Lc_{r_i}$ at scale $r_i$, which are such that the clusters in $\Lc_{r_{i+1}}$ containing them are disjoint and outermost. We now assume that this event occurs, and extract from $(\gamma_1,\gamma_2)$ a $\rho$-separated admissible pair of excursions in $B_{r_i}$. We select such a pair of excursions in an arbitrary way, and denote it by $(\eta_1,\eta_2)$ from now on. Assume $\eta_j$ is from $x_j$ to $y_j$. Now, the ending parts of $\eta_1$ and $\eta_2$ are $(2\rho r_i)$-away in $\Lc_{r_i}$, in the sense of \eqref{eq:poe} with $\delta s$ replaced by $\rho r_i$. We cover $\partial B_{r_i}$ by a chain of $O(\rho^{-1})$ balls of radius $\rho r_i/4$, with centers on $\partial B_{r_i}$, such that the centers of two neighboring balls are at distance $\rho r_i/4$ (with possibly at most one exception, for which the distance is smaller than $\rho r_i/4$). For each $j=1,2$, we distinguish the following two cases.
	
	\textbf{Case (1)}. Suppose $\eta_j\subset D$ for some ball $D$ in the previous chain. Let $D'$ be the ball of radius $\rho r_i/2$ concentric with $D$. We further consider two subcases. If $\gamma_j\subset D'$, then we add a loop that stays in $D'\setminus D$ and surrounds $D$ with some universal positive probability. If $\gamma_j$ is not contained in $D'$, then $\gamma_j$ crosses the annulus $D'\setminus D$. By the scale invariance of the Brownian loop measure, the total mass of $\gamma_j$ such that $\gamma_j$ crosses $D'\setminus D$ and $\gamma_j\subset B_{2r_i}$ is bounded by some constant $c_1(\rho)$. Moreover, the total mass of $\gamma_j$ such that $\gamma_j\cap B_{2r_i}^c\neq\emptyset$ and $\gamma_j$ does not surround $B_{r_i}$ is bounded by some constant $c_2$. Therefore, the total mass of $\gamma_j$ that does not stay in $D'$ is bounded by $c_1(\rho)+c_2$. On the other hand, the total mass of loops in $D'\setminus D$ that surround $D$ is bounded from below by some constant $c_3$. Combining these two estimates, it follows that we can resample $\gamma_j$ by a new loop $\gamma'_j$ that stays in $D'\setminus D$ and surrounds $D$ with a cost at least $c_3/(c_1(\rho)+c_2)$. We refer to Figure~\ref{fig:sep} for an illustration ($j=1$).
	
	\textbf{Case (2)}. Suppose $|x_j-y_j|\ge \rho r_i/4$. Then, the total mass of all bridges in $B_{r_{i+1}}$ connecting $x_j$ and $y_j$ is bounded by the Green's function $G_{B_{r_{i+1}}}(x_j,y_j)$, which is $=O(|\log\rho|)+c_4$ for some constant $c_4$.
	We now construct two cones $T^1_j$ and $T^2_j$ such that
	\begin{itemize}
		\item $T^1_j\cap B_{r_i}\subset B_{\rho r_i/8}(x_j)$ and $T^2_j\cap B_{r_i}\subset B_{\rho r_i/8}(y_j)$. Both of the arcs $T^1_j\cap \partial B_{\rho r_i/8}(x_j)$ and $T^2_j\cap \partial B_{\rho r_i/8}(y_j)$ have length greater than $C_1\rho r_i$ for some small constant $C_1$.
		\item The sides of each cone are truncated when they intersect $\partial B_{1.5 r_i}$, and the arc on $\partial B_{1.5 r_i}$ that joins the endpoints consists of the remaining side of the cone.
		\item Both cones do not intersect $B_{\rho r_i}(x_{3-j})\cup B_{\rho r_i}(y_{3-j})$.
	\end{itemize} 
	Given the excursion $\eta_j$, we can reconstruct the remaining bridge in the following way with a constant cost $c_5(\rho)$:
	\begin{itemize}
		\item Let $\beta_j^1$ (resp.\ $\beta_j^2$) be a Brownian motion started from $x_j$ (resp.\ $y_j$) and restricted to exit $T_j^1\cup B_{\rho r_i/8}(x_j)$ (resp.\ $T_j^2\cup B_{\rho r_i/8}(y_j)$) from $\partial B_{1.5 r_i}$ and stopped there.
		\item Let $\omega_j$ be a Brownian path from the endpoint of $\beta_j^2$ to that of $\beta_j^1$, such that it is contained in a $(C_2 r_i)$-sausage of the arc between the endpoints (for some $C_2 > 0$ small enough).
		\item We construct the new loop $\gamma'_j$ by the concatenation $\gamma'_j:=\eta_j\oplus\beta_j^2\oplus\omega_j\oplus [\beta_j^1]^R$. Note that this decomposition is unique since $\eta_j$ is the unique admissible excursion of $\gamma'_j$ that has diameter greater than $\rho r_i/4$.
	\end{itemize}
	It follows that we can resample $\gamma_j$ by a new loop $\gamma'_j$ constructed above with cost at least $c_6(\rho)$. See Figure~\ref{fig:sep} for $j=2$.
	
	In fact, for case (1), we can also construct a suitable cone $T^3_j$ with a similar form such that $T^3_j\cap B_{r_i}\subset D$. Then, with probability at least $c_7(\rho)>0$, there exists a loop (in navy in Figure~\ref{fig:sep}) in $T^3_j$ that intersects  $D$ and surrounds an arc on $\partial B_{1.4 r_i}$ of length greater than $C_3 r_i$. 
	
	Since there are two cases for each $j$, there are four cases in total. We will only consider the case $j=1$ in case (1) and $j=2$ in case (2) for an illustration. We do the resampling for both values of $j$ as above, to get the desired new loops, which has some constant cost $c_8(\rho):=c_3\, c_6(\rho)\, c_7(\rho)/(c_1(\rho)+c_2)$. We can further choose the cones such that $T^3_1$ and $T^1_2\cup T^2_2$ are well-separated from each other, in the sense that if we enlarge the angle of each cone by some fixed constant proportion, then they are still disjoint. Moreover, all the clusters that intersect each cone can be forced to stay in the respective larger cone with some positive probability $c_9(\rho)$. We further require all the other clusters in $\Lc_{r_{i+1}}$ that intersect $\partial B_{r_i}$, except the two containing $\gamma_1$ and $\gamma_2$ respectively, to have diameter smaller than $r_i/100$, which occurs with probability $c_{10}>0$. Resampling the loops $\gamma_j$ if necessary and assuming all the above events occur, we see that $m_{r_{i+1}}(u_1)\ge u_2$ for some constants $u_1,u_2>0$. Therefore, by the FKG inequality (\cite[Theorem 20.4]{LP2017}), we obtain that 
	\[
	\Pb( m_{r_{i+1}}(u_1)\ge u_2 ) \ge \rho \,c_8(\rho)\, c_9(\rho) \, c_{10}\, \Pb( \Qc(r_i)\ge \rho ),
	\]
	which implies Lemma~\ref{lem:sep-1} immediately, with $u = u_1 \wedge u_2$.
\end{proof}

\begin{proof}[Proof of Lemma~\ref{lem:sl}]
	Let $u$ be the constant from Lemma~\ref{lem:sep-1}. By Lemma~\ref{lem:locality}, we have $\Pb(\itwo(\eps,r))\lesssim \Pb( \Ac(M) )$. 
	Hence, we only need to show that
	\begin{equation}\label{eq:AML}
		\Pb( \Ac(M) ) \lesssim \Pb( \Qc(r_M)\ge u ),
	\end{equation}
	which combined with the fact that $\Pb( \Qc(r_M)\ge u ) \lesssim \Pb( \Qc(r)\ge u )$ (since $\frac{r}{2} < r_M \leq r$) would finish the proof of  Lemma~\ref{lem:sl}.
	
	Let $G$ be the event that all clusters in $\Lc_\eps$ have diameter smaller than $\eps/100$, which occurs with probability at least $c_1$. 
	By definition, if we consider the loop configuration $\Lc_\eps$ on the event $G$, we have $m_\eps(1/4)\ge a_1$ for some constant $a_1>0$, which implies $\Qc(\eps) \ge a_2:=1/4\wedge a_1$. Using Lemma~\ref{lem:sep-1} repeatedly, we obtain that 
	\begin{equation}\label{eq:QML3}
		\Pb( \Qc(r_M)\ge u ) \ge v(a_2)\, v(u)^{M-1}\, \Pb( G ) \ge c_1\, v(a_2)\, v(u)^{M-1}. 
	\end{equation}
	For all $i\ge 2$, by Lemma~\ref{lem:sep-2}, for any $\theta>0$, there exists $\rho(\theta)>0$ such that
	\begin{align*}
		\Pb( \wt\Ac(i) ) &\le \Pb( \Qc(r_i)\ge \rho ) + \Pb( 0<\Qc(r_i)<\rho, \wt\Ac(i) ) \\
		& \le \Pb( \Qc(r_i)\ge \rho ) + \theta\, \Pb( \Ac(i-1) ),
	\end{align*}
	which combined with Lemma~\ref{lem:quasi} implies that 
	\[
	\Pb( \Ac(i) ) \le b(\delta)\, \Pb( \Qc(r_i)\ge \rho ) + (b(\delta)\, \theta+\delta^c) \Pb( \Ac(i-1) ).
	\]
	Iterating the above inequality, we obtain
	\begin{equation}\label{eq:QML1}
		\Pb( \Ac(M) ) \le \sum_{i=0}^{M-2} b(\delta)\, (b(\delta)\,\theta+\delta^c)^i\, \Pb( \Qc(r_{M-i})\ge \rho ) + (b(\delta)\,\theta+\delta^c)^{M-1}.
	\end{equation}
	Applying Lemma~\ref{lem:sep-1} again, we have
	\begin{align}\notag
		\Pb( \Qc(r_{M-i})\ge \rho ) &\le v(\rho)^{-1} \,\Pb( \Qc(r_{M-i+1})\ge u ) \\ \label{eq:QML2}
		&\le v(\rho)^{-1}\, v(u)^{-i+1} \, \Pb( \Qc(r_{M})\ge u ).
	\end{align}
	Plugging \eqref{eq:QML3} and \eqref{eq:QML2} into \eqref{eq:QML1}, 
	\[
	\Pb( \Ac(M) ) \le \Pb( \Qc(r_M)\ge u ) \, \left( \frac{b(\delta)\,v(u)}{v(\rho)}\, \sum_{i=0}^{M-2} \left(\frac{b(\delta)\,\theta+\delta^c}{v(u)}\right)^i + \frac{(b(\delta)\,\theta+\delta^c)^{M-1}}{c_1\, v(a_2)\, v(u)^{M-1}} \right).
	\]
	We first choose $\delta$ sufficiently small to satisfy $\delta^c\le v(u)/4$, and for this $\delta$ we take $\theta$ to be small enough such that $b(\delta)\,\theta\le v(u)/4$. Hence, $\frac{b(\delta)\,\theta+\delta^c}{v(u)}\le 1/2$, and \eqref{eq:AML} follows. This completes the proof of Lemma~\ref{lem:sl}.
\end{proof}

\begin{proof}[Proof of Proposition~\ref{prop:sep_i}]
	By Lemma~\ref{lem:sl} and noting that $\itwo(\eps, r)\subset \itwo(\eps, r/2)$, we have $\Pb( \Qc(r/2)\ge u )\ge c\, \Pb( \itwo(\eps, r) )$. Hence, it suffices to show that $\Pb( \itwo(\eps, r) \cap \gitwo(\eps, r) )\ge c\, \Pb( \Qc(r/2)\ge u )$. But this follows immediately by using a similar construction of loops as in the proof of Lemma~\ref{lem:sep-1}, combined with a similar argument as in the proof of Lemma~\ref{lem:2arm_good} to control the size of all other clusters. We omit the details for brevity. 
\end{proof}

\subsection{Equivalence between different four-arm events}\label{subsec:equivalence}

We start by showing the following up-to-constants estimate.
\begin{lemma}\label{lem:compareE12}
	For $\kappa\in(8/3,4]$, we have 
	\begin{equation*}
		\Pb(\ione(\eps, r)) \lesssim \Pb(\itwo(\eps, r)) \lesssim \Pb(\ione(2\eps, r)).
	\end{equation*}
\end{lemma}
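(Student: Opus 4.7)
The plan is to establish both inequalities via the Brownian loop soup (BLS) representation of CLE$_\kappa$, combined with Poissonian modifications to the BLS. A preliminary topological observation is that if $\gamma$ is a simple CLE loop (the outer boundary of a BLS cluster $\Cc$), then the number of crossings of $\gamma$ across $A_{\eps,r}$ equals twice the number of distinct crossing \emph{arms} of $\Cc$ in $A_{\eps,r}$, since each arm is a disjoint strip bordered on both sides by $\gamma$. Hence $\ione(\eps,r)$ amounts to the existence of a single outermost crossing cluster with at least two arms in $A_{\eps,r}$, while $\itwo(\eps,r)$ amounts to the existence of at least two distinct outermost crossing clusters. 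In either case, the underlying BLS has at least two disjoint crossing arms; the events differ only in whether these arms belong to a single cluster (tied together by additional BLS loops) or to distinct clusters.

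For the merging inequality $\Pb(\itwo(\eps,r)) \lesssim \Pb(\ione(2\eps,r))$, I first apply Proposition~\ref{prop:sep_i}, together with its inner-scale analog obtained from the reverse-direction separation lemma of Remark~\ref{rmk:general-sl}, to restrict at comparable cost to a well-separated configuration in which the two crossing clusters $\Cc_1,\Cc_2$ are confined to disjoint sub-regions near both $\partial B_r$ and $\partial B_\eps$. In particular, inside the annulus $A_{\eps,2\eps}$ the pieces of $\Cc_1$ and $\Cc_2$ lie at uniformly positive relative distance. By Poissonian independence, the Brownian loops entirely contained in $A_{\eps,2\eps}$ form an independent Poisson point process, and the mass of loops in this annulus that intersect both $\Cc_1$ and $\Cc_2$ is uniformly bounded below (using the scale invariance of the Brownian loop measure and the separation structure). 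Thus with uniformly positive probability at least one such connecting loop exists, merging $\Cc_1$ and $\Cc_2$ into a single cluster possessing two arms across $A_{2\eps,r}$; its outer boundary is then a single CLE loop with at least $4$ crossings of $A_{2\eps,r}$, giving $\ione(2\eps,r)$.

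For the splitting inequality $\Pb(\ione(\eps,r)) \lesssim \Pb(\itwo(\eps,r))$, the plan is to prove an analog of Proposition~\ref{prop:sep_i} adapted to the single-cluster two-arm event, by which, at comparable cost, I may assume that the two arms of the cluster $\Cc$ are well-separated near both $\partial B_r$ and $\partial B_\eps$. Under such separation, any Brownian loop of $\Lc$ that ties the two arms into a single cluster is forced, up to a null-probability modification, to lie in a localized bridge region of finite Poisson mass. Applying Poissonian independence to the loops in this bridge region, the void event that no such merging loop exists has uniformly positive probability; on the void event, the two arms form two distinct outermost clusters each crossing $A_{\eps,r}$, yielding $\itwo(\eps,r)$.

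The main obstacle is the splitting direction. The merging argument only requires the existence of a single connecting Brownian loop, a Poissonian success event directly controlled by the loop measure. The splitting argument instead requires the absence of an entire family of merging loops (a Poissonian void event), and such loops can a priori lie anywhere within the cluster. The crux is therefore to establish the right separation lemma for the single-cluster two-arm configuration and to use it to force all potential merging loops into a bounded region of finite Poisson mass; this is technically analogous to the separation framework developed in Section~\ref{sec:separation}, but one must track the quality of a single cluster's two arms rather than two disjoint clusters.
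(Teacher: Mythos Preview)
Your merging direction ($\itwo \lesssim \ione(2\eps,r)$) would work, but it is much heavier than needed. The paper handles it in three lines with no separation lemma at all: let $\Xi$ be the Brownian loops contained in $A_{\eps,2\eps}$ that \emph{surround} $B_\eps$. By scale invariance $\Pb(\Xi\neq\emptyset)\ge c>0$, and $\Xi$ is independent of $\Lc_\Db\setminus\Xi$. Since $\itwo(\eps,r)$ forces $\Xi=\emptyset$, one has $\Pb(\itwo(\eps,r))=\Pb(\itwo(\eps,r)\text{ for }\Lc_\Db\setminus\Xi)\,\Pb(\Xi=\emptyset)$, and on $\{\itwo(\eps,r)\text{ for }\Lc_\Db\setminus\Xi\}\cap\{\Xi\neq\emptyset\}$ the surrounding loop merges the two clusters into one whose boundary has four crossings of $A_{2\eps,r}$. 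No inner-scale separation is required, because a surrounding loop automatically hits anything that crosses the annulus.

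Your splitting direction ($\ione \lesssim \itwo$) has a genuine gap. You propose to separate the two ``arms'' of the single cluster $\Cc$ near both boundaries and then argue that the Brownian loops tying them together lie in a localized region of finite mass, so that a void event disconnects them. But the two arms belong to the \emph{same} cluster: they must be connected somewhere in $A_{\eps,r}$, and separation near the boundaries gives no control on where. The connection can occur at any intermediate scale, via chains of arbitrarily small loops whose total mass is infinite, so there is no finite-mass ``bridge region'' to vacate. (Your preliminary claim that boundary crossings of $\gamma$ correspond two-to-one with cluster arms is also not correct in general: a single fat arm can have a fjord-like outer boundary producing many crossings.)

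The paper's route is quite different. It explores the loops in $B_r$ and splits into cases according to how many subclusters of $\Lc_r$ already cross $A_{\eps,r/2}$. When none do, the four crossings must come from one or two explicit Brownian loops crossing $A_{r/2,r}$; it extracts their excursions in $B_r$, applies the Brownian-motion separation lemma (Lemma~\ref{lem:sep-jk} with $j=k=2$) to these excursions, and then \emph{resamples} the remaining parts of those specific Brownian loops so that they become two disjoint loops, producing $\itwo$. When exactly one subcluster already crosses, it uses the non-disconnection version of the separation lemma (Remark~\ref{rmk:general-sl}). The point is that the argument works by surgery on identified Brownian loops at a fixed intermediate scale, not by deleting an unlocalized family of merging loops.
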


\begin{proof}
	We first show that $\Pb(\itwo(\eps, r)) \lesssim \Pb(\ione(2\eps, r))$.
	Let $\Xi$ be the set of loops in $\Lc_{\Db}$ that stay in $A_{\eps,2\eps}$ and surround $B_\eps$. Since the measure of such loops is bounded from below uniformly in $\eps$, there is a universal constant $c>0$ such that 
	\begin{equation}\label{eq:Xi}
		\Pb( \Xi=\emptyset ) \le c\, \Pb( \Xi\neq\emptyset ).
	\end{equation}
	Note that $\itwo(\eps, r)$ is equivalent to $\itwo(\eps, r)(\Lc_{\Db}\setminus\Xi) \cap \{ \Xi=\emptyset \}$, where the notation $\itwo(\eps, r)(\Lc_{\Db}\setminus\Xi)$ means that $\Lc_{\Db}\setminus\Xi$ satisfies $\itwo(\eps, r)$. By the independence of $\Xi$ and $\Lc_{\Db}\setminus\Xi$, we obtain that
	\begin{equation}\label{eq:E2Xi}
		\Pb( \itwo(\eps, r) ) =\Pb( \itwo(\eps, r)(\Lc_{\Db}\setminus\Xi) \cap \{ \Xi=\emptyset \} ) =  \Pb( \itwo(\eps, r)(\Lc_{\Db}\setminus\Xi)) \, \Pb( \Xi=\emptyset ). 
	\end{equation}
	Combining \eqref{eq:Xi} with \eqref{eq:E2Xi}, and noting that $\itwo(\eps, r)(\Lc_{\Db}\setminus\Xi)\cap \{ \Xi\neq\emptyset \}$ implies $\ione(2\eps,r)$, we have 
	\begin{equation*}
		\Pb( \itwo(\eps, r) ) \le c\, \Pb( \itwo(\eps, r)(\Lc_{\Db}\setminus\Xi)) \, \Pb( \Xi\neq\emptyset ) \le c\, \Pb( \ione(2\eps,r) ).
	\end{equation*}
	
	Next, we show the harder direction $\Pb(\ione(\eps, r)) \lesssim \Pb(\itwo(\eps, r))$. The general proof idea is very similar to the proof of locality for arm events. Hence, we will only sketch the key idea, and refer to Section~5.2 in \cite{GNQ1} for the corresponding details in the discrete setting. Note that there are different cases for $\ione(\eps, r)$, as shown in Figure~\ref{fig:4arm_CLE}. But all of them can be tackled in a very similar way. To fix idea, we will focus on the middle case with C-shape, i.e., there are two pairs of crossings of the loop $\gamma$ in $A_{\eps,r}$ that are not connected by the intersection of $\gamma$ with $B_r$. We denote this event by $\ionec(\eps, r)$. On the event $\ionec(\eps, r)$, we first explore all the loops in $B_r$ to check if there are clusters made by these loops that cross $A_{\eps,r/2}$. If there are at least two of them, then we end up with $\itwo(\eps, r/2;B_r)$. By Proposition~\ref{prop:sep_i}, it is easy to see that $\Pb(\itwo(\eps, r/2;B_r))\lesssim\Pb(\itwo(\eps, r))$.
	Hence, we only need to consider the following two cases, belonging to $\ionec(\eps, r)\setminus \itwo(\eps, r/2;B_r)$. 
	\begin{itemize}
		\item \textbf{Case (1)}. No cluster of $\Lc_r$ across $A_{\eps,r/2}$. There are two different ways to realize the event $\ionec(\eps, r)$. First, assume there are two Brownian loops $\gamma_1$ and $\gamma_2$ crossing $A_{r/2,r}$ such that each $\gamma_i$ contains an excursion $\eta_i$ inside $B_r$ that are connected to two different clusters $\Cc_1$ and $\Cc_2$ inside $B_{r/2}$, respectively, and  $\Lambda(\eta_1\cup\Cc_1,\Lc_r)$ does not intersect $\eta_2\cup\Cc_2$. Note that we allow $\gamma_1$ and $\gamma_2$ to intersect away from $\eta_1$ and $\eta_2$. Now, we view each excursion $\eta_i$ as a pair of Brownian paths from $r/2$ to $r$: by  Lemma~\ref{lem:sep-jk} with $j=k=2$, we see that the ending parts of these excursions are away from each other inside $\Lc_r$. Hence, we can resample the remaining parts of $\gamma_i\setminus\eta_i$ for both $i=1,2$ such that they stay inside local regions around the ending points of $\eta_i$, respectively, and the loops $\gamma_1$ and $\gamma_2$ after resampling are also away from each other inside $\Lc_r$. Finally, by resampling all the remaining loops in $\Lc_{\Db}\setminus(\Lc_r\cup\{\gamma_1,\gamma_2\})$ such that the clusters formed by them are sufficiently small, we end up with $\itwo(\eps, r)$. Since every resampling procedure only costs some positive constant (independent of $\eps$), we conclude this case. Next, we assume that there is a Brownian loop $\wt\gamma$ that contains two disjoint excursions $\wt\eta_1$ and $\wt\eta_2$, which are connected to two clusters $\Cc_1$ and $\Cc_2$ inside $B_{r/2}$ as before, and satisfy the same condition, where the only difference is that both excursions come from the same loop. Then, we can do the same surgery as before to split the large loop $\wt\gamma$ into two separated small loops, that contain $\wt\eta_1$ and $\wt\eta_2$, respectively, such that $\itwo(\eps, r)$ occurs in the end (note that in this subcase, the cost can depend on $\alpha$ or $\kappa$, but it is still independent of $\eps$). This finishes the proof in this case. 

		\item \textbf{Case (2)}. Exactly one cluster of $\Lc_r$ across $A_{\eps,r/2}$. If there already exists a cluster $\Cc$ of $\Lc_r$ that crosses $A_{\eps,r/2}$, then we can reduce it to the case that $\Cc$ is not too big, that is, $\Cc\subset B_{3r/4}$ (similar to the argument used in the proof of Lemma~5.9 in \cite{GNQ1}). On the event $\ionec(\eps, r)$, there is a loop $\wh\gamma$ that contains an excursion $\wh\eta$ inside $B_r$, which is connected to some other cluster $\Cc'$ inside $B_{r/2}$, and furthermore, the filling of  $\Lambda(\wh\eta\cup\Cc',\Lc_r)$ does not intersect $\Cc$. Still, we allow the remaining part $\wh\gamma\setminus\wh\eta$ to intersect $\Cc$. By a continuous version of separation lemma associated with disconnection events (see Remark~\ref{rmk:general-sl}), analogous to Proposition 4.9 in \cite{GNQ1}, we get that $\wh\eta$ is separated from $\Cc$ at scale $r$. Hence, we can reconnect the endpoints of $\wh\eta$ to get a new loop, which is also separated from $\Cc$. This implies $\itwo(\eps, r)$.
	\end{itemize}
	
	This completes the proof for $\ionec(\eps, r)$. The other possible case, indicated by the left part of Figure~\ref{fig:4arm_CLE}, can be proved in a similar way by using an inward exploration and a continuous version of the reversed separation lemma (corresponding to Proposition 4.8 in \cite{GNQ1}), so we omit the details. 
\end{proof}

\subsection{Boundary case}\label{subsec:boundary}
Now, we give a corresponding separation lemma for $\Gamma^+$.
Suppose $0<2\eps< r<1/2$. On the event $\btwo(\eps,r)$, there are two loops $\gamma_1$ and $\gamma_2$ that intersect both 
$\partial B_\eps$ and $\partial B_r$.  Let $\gbtwo(\eps, r)$ be the event that the following conditions all hold
\begin{itemize}
	\item $\gamma_1 \subset B_r \cup B_{r/10}(re^{\pi i/4}) \setminus B_{r/10}(re^{3\pi i/4})$ and $\gamma_2 \subset B_r \cup B_{r/10}(re^{3\pi i/4}) \setminus B_{r/10}(re^{\pi i/4})$,
	\item all the loops in $\Gamma^+\setminus\{\gamma_1, \gamma_2\}$ that intersect $\Db$ have diameter less than $r/40$.
\end{itemize}

\begin{lemma}\label{lem:sep_b}
	For $\kappa\in(8/3,4]$, there exists a constant $c>0$ such that for all $0<2\eps< r<1/2$,
	\begin{align*}
		\Pb(\btwo(\eps,r))\le c\, \Pb(\btwo(\eps, r) \cap \gbtwo(\eps, r)).
	\end{align*}
\end{lemma}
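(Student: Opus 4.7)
The plan is to mirror the proof of Proposition~\ref{prop:sep_i} in the boundary setting. By the correspondence from \cite{MR2979861}, it suffices to work with the Brownian loop soup $\Lc_\Hb$ in $\Hb$ at intensity $\alpha$, identifying $\Gamma^+$ with the outer boundaries of its outermost clusters. All ingredients used in the interior case---the two-packet separation lemma (Lemma~\ref{lem:sep-jk}), the locality property (Lemma~\ref{lem:locality}), quasi-multiplicativity, the stability estimate (Lemma~\ref{lem:quasi}) and the two quality lemmas (Lemmas~\ref{lem:sep-2}, \ref{lem:sep-1})---admit direct boundary analogs whose proofs only require that balls meeting the real line be replaced by their intersections with $\Hb$, and that Brownian paths used in any surgery be required to stay in $\Hb$.

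Concretely, I would first define a boundary quality $Q^+_\eps(L_s^+)$ by an analog of \eqref{eq:quality}, based on an extension probability $m^+_s(\delta;\eps,L_s^+)$ by $\delta$-good pairs of loops in $\Lc^+_{2s}\setminus\Lc^+_s$, where $\Lc^+_s$ denotes the loops in $\Lc_\Hb$ contained in $B_s\cap\Hb$. To match the geometry prescribed by $\gbtwo(\eps,r)$, an admissible pair of excursions $(\eta_1,\eta_2)$ in $B_s\cap\Hb$ is additionally required to have its endpoints on the right, respectively left, portion of $\partial B_s\cap\Hb$, so that upon extension to the next scale one loop is forced to sit in a neighborhood of $re^{\pi i/4}$ and the other in a neighborhood of $re^{3\pi i /4}$.

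Next I would establish the three key ingredients. For the boundary analog of Lemma~\ref{lem:quasi}, the same case split applies inside $\Hb$: either both crossing clusters at scale $r_i$ are reconstructible from near $B_{r_{i-1}}\cap\Hb$ (in which case one invokes the boundary version of Lemma~\ref{lem:sep-jk}), or one cluster extends close to $\partial B_{r_i}$ (in which case the surrounding-loop surgery is run using half-disks). The boundary analog of Lemma~\ref{lem:sep-2} goes through verbatim. For the boundary analog of Lemma~\ref{lem:sep-1}, the cones used to resample each $\gamma_j$ are chosen inside $\Hb$, with one loop on each side of the imaginary axis. Iterating as in the proof of Lemma~\ref{lem:sl} yields constants $u,c>0$ such that $\Pb(Q^+_\eps(\Lc^+_{r/2})\ge u) \ge c\,\Pb(\btwo(\eps,r))$. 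Starting from a configuration of positive boundary quality at scale $r/2$, one performs an explicit extension to scale $r$ routing $\gamma_1$ into $B_r\cup B_{r/10}(re^{\pi i/4})\setminus B_{r/10}(re^{3\pi i/4})$ and $\gamma_2$ into the mirror region, at a uniform positive cost; the diameters of the remaining loops in $\Gamma^+$ intersecting $\Db$ are controlled by stochastic domination exactly as in the proof of Lemma~\ref{lem:2arm_good}, yielding $\gbtwo(\eps,r)$.

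The main obstacle will be the quality improvement step (boundary analog of Lemma~\ref{lem:sep-1}) when an endpoint of an admissible excursion lies very close to $\partial\Hb$: the cones must be opened away from the real line to remain inside $\Hb$, while still separating the two loops to opposite sides of the imaginary axis. The $\rho$-separation at the previous scale leaves enough room to tilt the cones accordingly, and since the surgery only involves Brownian-measure computations at fixed aspect ratio, all resulting multiplicative constants remain universal.
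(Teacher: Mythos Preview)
Your proposal is correct and follows precisely the approach the paper intends: the paper simply states that the proof of Lemma~\ref{lem:sep_b} is similar to that of Proposition~\ref{prop:sep_i} and omits it, and your outline of adapting the quality framework, the stability estimate, and the two quality lemmas to the half-plane setting (with half-disks, cones contained in $\Hb$, and one loop routed to each side of the imaginary axis) is exactly the intended adaptation. Your identification of the only nontrivial geometric issue, namely tilting the cones away from $\partial\Hb$ when an excursion endpoint lies near the real line, is apt and correctly resolved.
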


The proof is similar to that of Proposition~\ref{prop:sep_i}, and thus omitted.
Using Lemma~\ref{lem:sep_b}, and adapting the proof of Lemma~\ref{lem:compareE12}, it is not hard to derive the following result.

\begin{lemma}\label{lem:compareE12_b}
	For $\kappa\in(8/3,4]$, we have 
	\begin{equation*}
		\Pb(\bone(\eps, r)) \asymp \Pb(\btwo(\eps, r)).
	\end{equation*}
\end{lemma}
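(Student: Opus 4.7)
The plan is to run the proof of Lemma~\ref{lem:compareE12} in the half-plane, with Lemma~\ref{lem:sep_b} playing the role of Proposition~\ref{prop:sep_i}, and using boundary counterparts of Lemma~\ref{lem:sep-jk} and of the variants alluded to in Remark~\ref{rmk:general-sl}. We work throughout with the Brownian loop soup $\Lc_\Hb$ of intensity $\alpha$ related to $\kappa$ by \eqref{eq:c_kappa}, and view $\bone(\eps,r)$ and $\btwo(\eps,r)$ as events on $\Lc_\Hb$: the first asserts a single outermost cluster producing four crossings of $A_{\eps,r}\cap\Hb$, and the second asserts two disjoint outermost clusters each crossing $A_{\eps,r}\cap\Hb$.

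For the direction $\Pb(\btwo(\eps,r))\lesssim \Pb(\bone(\eps,r))$ (up to an inessential rescaling of $\eps$ by a factor $2$ that is absorbed in the implicit constants in $\asymp$), the strategy is to introduce $\Xi^+$, the set of Brownian loops of $\Lc_\Hb$ contained in $A_{\eps,2\eps}\cap\Hb$ that join the two arcs $\partial B_\eps\cap\{\Re z>0\}$ and $\partial B_\eps\cap\{\Re z<0\}$. By construction, any loop in $\Xi^+$ hooks both crossing clusters on $\btwo(\eps,r)$, and by scale invariance of the Brownian loop measure in $\Hb$, the total mass of $\Xi^+$ is uniformly bounded below in $\eps$, so that $\Pb(\Xi^+=\emptyset)\asymp \Pb(\Xi^+\neq\emptyset)$. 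Using the independence of $\Xi^+$ and $\Lc_\Hb\setminus\Xi^+$, together with the fact that on $\btwo(\eps,r)(\Lc_\Hb\setminus\Xi^+)\cap\{\Xi^+\neq\emptyset\}$ the two crossing clusters merge into a single outermost cluster with four arms across $A_{2\eps,r}\cap\Hb$, one then obtains the desired inequality by the exact analog of the identity \eqref{eq:E2Xi}.

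For the reverse direction $\Pb(\bone(\eps,r))\lesssim \Pb(\btwo(\eps,r))$, we adapt the case analysis from the proof of Lemma~\ref{lem:compareE12}. As there, we focus on the C-shape subevent (the remaining sub-event is handled analogously by an inward exploration). We first reveal all loops of $\Lc_\Hb$ that are contained in $B_r\cap\Hb$. If they already produce two disjoint clusters crossing $A_{\eps,r/2}$, then Lemma~\ref{lem:sep_b} (combined with a resampling of the loops exiting $\partial B_{r/2}$, at constant cost) enforces $\btwo(\eps,r)$. Otherwise we fall into one of two subcases parallel to cases~(1) and~(2) in the interior proof: either the four crossings come from two Brownian loops each contributing a well-positioned excursion in $B_r\cap\Hb$ (possibly one and the same loop containing two disjoint such excursions), or they come from a single Brownian loop with one such excursion together with a pre-existing cluster crossing $A_{\eps,r/2}$ in $B_r\cap\Hb$. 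In each subcase we invoke the appropriate boundary version of Lemma~\ref{lem:sep-jk} (and of its reversed and non-disconnection variants from Remark~\ref{rmk:general-sl}) to separate the two excursions at scale~$r$, and then resample the bridging portions of the ambient loop, both inside and outside $B_r\cap\Hb$, so as to split it into two disjoint loops realising $\btwo(\eps,r)$. The principal obstacle is the derivation of the boundary versions of these separation lemmas; they are direct counterparts of their interior analogs from~\cite{GNQ1}, established in parallel using the same scheme that underlies Lemma~\ref{lem:sep_b}, and involve no fundamentally new ideas.
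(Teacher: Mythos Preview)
Your argument for the direction $\Pb(\bone(\eps,r))\lesssim\Pb(\btwo(\eps,r))$ is fine and matches the paper: both leave this direction to the boundary adaptation of the case analysis in Lemma~\ref{lem:compareE12}.

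The direction $\Pb(\btwo(\eps,r))\lesssim\Pb(\bone(\eps,r))$, however, has a genuine gap. Your key claim is that ``any loop in $\Xi^+$ hooks both crossing clusters on $\btwo(\eps,r)$'', but this is false in the half-plane. The half-annulus $A_{\eps,2\eps}\cap\Hb$ is simply connected, and a Brownian loop in $\Lc_\Hb$ almost surely avoids $\Rb$; hence any loop in $\Xi^+$ is a closed curve lying strictly inside the half-annulus and therefore does \emph{not} separate $\partial B_\eps\cap\Hb$ from $\partial B_{2\eps}\cap\Hb$. A crossing cluster can pass through $A_{\eps,2\eps}\cap\Hb$ close to the real axis and miss every loop in $\Xi^+$ entirely. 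This is exactly why the interior trick (a loop in $A_{\eps,2\eps}$ surrounding $B_\eps$) has no direct boundary analogue: no loop in $\Hb$ can surround $B_\eps\cap\Hb$.

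The paper handles this direction differently, by merging at the \emph{outer} scale rather than the inner one. It first invokes Lemma~\ref{lem:sep_b} to pass to $\btwo(\eps,r)\cap\gbtwo(\eps,r)$, on which the two crossing CLE loops exit $B_r$ through the well-separated balls $B_{r/10}(re^{i\pi/4})$ and $B_{r/10}(re^{3i\pi/4})$. One then adds, at constant cost depending only on $r$, a single Brownian loop contained in $B_{r/10}(re^{i\pi/4})\cup B_{r/10}(re^{3i\pi/4})\cup B_r^c$ that surrounds both $B_{r/20}(re^{i\pi/4})$ and $B_{r/20}(re^{3i\pi/4})$; such a loop lives in $\Hb$ (it stays away from $\Rb$), has uniformly positive mass, and necessarily intersects both $\gamma_1$ and $\gamma_2$, merging them into a single outermost cluster with four crossings of $A_{\eps,r}\cap\Hb$. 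Thus the merging happens \emph{outside} $B_r$, where there is room to go around in $\Hb$, rather than inside $B_{2\eps}$, where the real axis obstructs any surrounding loop.
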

\begin{proof}
	We only give a sketch of proof for the direction
	\[
	\Pb(\btwo(\eps, r)) \lesssim \Pb(\bone(\eps, r)).
	\]
	First, by Lemma~\ref{lem:sep_b}, we can replace the left-hand side by the event $\btwo(\eps, r) \cap \gbtwo(\eps, r)$, which is assumed to hold now. On this event, we can find a Brownian loop in $B_{r/10}(re^{\pi i/4})\cup B_{r/10}(re^{3\pi i/4})\cup B_r^c$ that surrounds both $B_{r/20}(re^{\pi i/4})$ and $B_{r/20}(re^{3\pi i/4})$, to realize the event $\bone(\eps, r)$. Note that the total mass of such Brownian loops is positive and only depends on $r$. Thus, we conclude the result immediately.  
\end{proof}

\section{Proof of Theorem~\ref{thm:main}} \label{sec:est_CLE}
In this section, we prove Theorem~\ref{thm:main}. We deal with the interior case in Section~\ref{sec:int_cle} and the boundary case in Section~\ref{sec:bdy_cle}.

\subsection{Interior four-arm event for CLE}\label{sec:int_cle}

\begin{figure}
	\centering
	\includegraphics[width=0.8\textwidth]{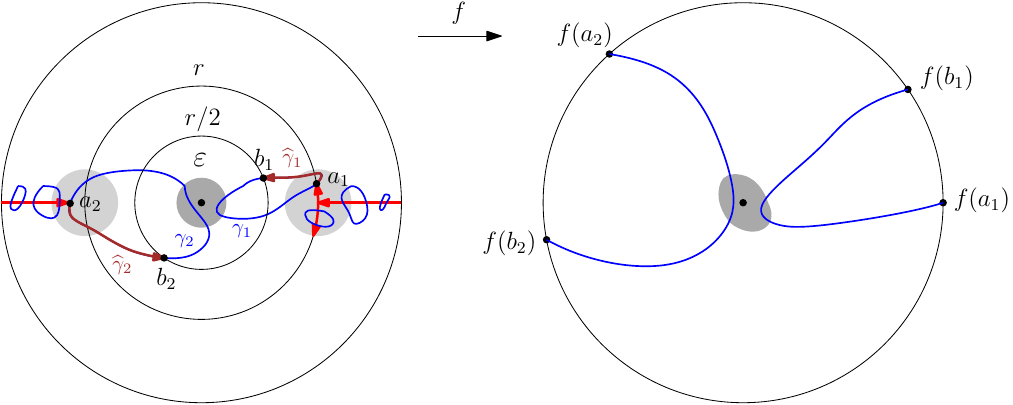}
	\caption{Exploration process performed on a configuration of $\Gamma$ for which $\itwo(\eps, r) \cap \gitwo(\eps, r)$ occurs. The red curves represent $\ell_i([0,t_i])$ for $i=1,2$.}
	\label{fig:4arm_CLE_int}
\end{figure}

\begin{proof}[Proof of \eqref{eq:i4arm} in Theorem~\ref{thm:main}]
By Lemma~\ref{lem:compareE12}, it suffices to show that 
\begin{equation}\label{eq:pe2}
	\Pb(\itwo(\eps, r)) \asymp \eps^{\xi_{4}(\kappa)}.
\end{equation}
Let us first perform the following exploration process, see Figure~\ref{fig:4arm_CLE_int}. For $i=1,2$, let $\ell_{i,1}$ be the horizontal line which goes from $e^{(i-1)\pi}$ to $re^{(i-1)\pi}$. Let $\ell_{i,2}$ (resp.\ $\ell_{i,3}$) be the clockwise (resp.\ counterclockwise) arc on $\partial B_r$ from $re^{(i-1)\pi}$ to the first point that it intersects $\partial B_{r/10}(r)$.
Let $\ell_i$ be the concatenation of the three curves $\ell_{i,1}$, $\ell_{i,2}$ and $\ell_{i,3}$, in this order. Note that $\ell_i$ is not a curve, but a succession of three curves. 
 We trace every loop in $\Gamma$ that $\ell_i$ encounters in the counterclockwise direction, in the order that $\ell_i$ encounters them. 
 We stop this exploration the first time that we reach $B_{r/2}$, namely we stop at a time that we are tracing along a loop $\gamma_i$ that intersects $B_{r/2}$, exactly at the moment that $\gamma_i$ reaches $\partial B_{r/2}$, so that we have discovered a piece $\wh \gamma_i$ of $\gamma_i$. 
If none of the loops in $\Gamma$ intersect both $\ell_i$ and $\partial B_{r/2}$, then we stop this process at the time that we have discovered all the loops in $\Gamma$ that intersect $\ell_i$.

On the event $E_i$ that there exists a loop in $\Gamma$ which intersects both $\ell_i$ and $\partial B_{r/2}$, we define $\gamma_i$ and $\wh \gamma_i$ just as above. 
Let $a_i$ and $b_i$ be the endpoints of $\wh \gamma_i$ (traced from $a_i$ to $b_i$ in the counterclockwise direction).
Let $t_i$ be the first time (according to the parametrization of $\ell_i$) that $\ell_i$ intersects $\gamma_i$. Let $K_i$ be the union of $\wh\gamma_i$, $\ell_i((0,t_i))$ together with all the loops in $\Gamma$ that $\ell_i((0,t_i))$ intersects. 
Let $E_3$ be the event that $E_1 \cap E_2$ occurs. 
On the event $E_3$, let $U$ be the connected component containing $0$ of $\Db \setminus \overline{K_1 \cup K_2}$. Let $f$ be the unique conformal map from $U$ onto $\Db$ with $f(0)=0$ and $f(a_1)=1$.

Let $\Sigma$ be the $\sigma$-algebra generated by $E_3$, $\wh\gamma_1, \wh\gamma_2$ and by all the loops in $\Gamma$ that $\ell_i((0,t_i))$ intersects. Note that $f$ and $U$ are measurable w.r.t.\ $\Sigma$.
Conditionally on $\Sigma$ and on $E_3$, the image of $(\gamma_1 \cup \gamma_2) \setminus (\wh\gamma_1\cup \wh\gamma_2)$ under $f$ is a pair of chordal $2$-SLE$_\kappa$ in $\Db$ with the endpoints $f(b_1), f(a_1), f(b_2), f(a_2)$, that we denote by $(\wt \gamma_1, \wt \gamma_2)$. Note that the pairing pattern of the $2$-SLE$_\kappa$ is not measurable w.r.t.\ $\Sigma$. Conditionally on $\Sigma$ and on $E_3$, the probability that $f(a_1)$ is connected to $f(b_1)$ is given by an explicit function of the cross-ratio of the four points $f(b_1), f(a_1), f(b_2), f(a_2)$, which was computed in \cite{MR3843419}. In particular, if $f(a_1)$ is connected to $f(b_2)$, then $\gamma_1$ and $\gamma_2$ are in fact the same loop, but this information (whether $\gamma_1$ and $\gamma_2$ are the same) is not measurable w.r.t.\ $\Sigma$.

On the event $E_3$, by the Schwarz lemma, we have $1\le f'(0)\le 2/r$. By the Koebe $1/4$ theorem, we have
\begin{align*}
	B_{\eps/4} \subseteq f(B_\eps) \subseteq B_{8\eps/r}.
\end{align*}
Let $E_4$ be the event that $\wt \gamma_1$ connects $f(a_1), f(b_1)$. Let $E_5$ be the event that both $\wt \gamma_1$ and $\wt\gamma_2$ intersect $B_{\eps/4}$.
If $E_3, E_4$ and $E_5$ all hold, then $\itwo(\eps, r)$ occurs. Therefore
\begin{align}\label{eq:e2_lo}
\Pb[ \itwo(\eps, r)] \ge \Pb[E_3 \cap E_4 \cap E_5] =\Eb \big[\Pb[E_4 \cap E_5 \mid \Sigma] \one_{E_3} \big].
\end{align}
On $E_3$, conditionally on $\Sigma$, let $p(f(b_1), f(a_1), f(b_2), f(a_2))$ be the probability that $E_4$ occurs. If we further condition on $E_4$, then the probability that $E_5$ occurs is given by Theorem~\ref{thm:zhan}. 
It follows that there exists a function $F$ depending on $\kappa, f(b_1), f(a_1), f(b_2), f(a_2)$ such that as $\eps\to 0$,
\begin{align}\label{eq:e45}
\Pb[E_4 \cap E_5 \mid \Sigma] \one_{E_3}\asymp F(\kappa, f(b_1), f(a_1), f(b_2), f(a_2))  \eps^{\xi_4}  \one_{E_3}.
\end{align}
Note that the following quantity does not depend on $\eps$, 
\begin{align}\label{eq:F}
\Eb\left[F(\kappa, f(b_1), f(a_1), f(b_2), f(a_2)) \one_{E_3} \right] \in (0,\infty).
\end{align}
The fact that \eqref{eq:F} is positive is obvious, since $E_3$ is an event with positive probability, and on $E_3$, the quantity $F(\kappa, f(b_1), f(a_1), f(b_2), f(a_2))$ is a.s.\ positive. On the other hand, if \eqref{eq:F} was infinite, then by \eqref{eq:e2_lo} we would have $\Pb[ \itwo(\eps, r)] =\infty$ for all $\eps$ small enough, which is impossible.
Combining \eqref{eq:e2_lo}, \eqref{eq:e45} and \eqref{eq:F}, we can deduce the lower bound 
$$\Pb[ \itwo(\eps, r)]  \gtrsim \eps^{\xi_4}.$$

Let $E_6$ be the event that both $\wt \gamma_1$ and $\wt\gamma_2$ intersect $B_{8\eps/r}$.
On the event $\itwo(\eps,r) \cap \gitwo(\eps,r)$, the events $E_3, E_4$ and $E_6$ all occur. Therefore
\begin{align*}
\Pb[\itwo(\eps,r) \cap \gitwo(\eps,r)] \le \Pb[E_3\cap E_4 \cap E_6]=\Eb \big[\Pb[E_4 \cap E_6 \mid \Sigma] \one_{E_3} \big].
\end{align*}
Similarly to \eqref{eq:e45}, we have
\begin{align*}
\Pb[E_4 \cap E_5 \mid \Sigma] \one_{E_3}\asymp F(\kappa, f(b_1), f(a_1), f(b_2), f(a_2))  \eps^{\xi_4}  \one_{E_3}.
\end{align*}
Combined with \eqref{eq:F}, we can also get the upper bound 
\begin{align*}
\Pb[\itwo(\eps,r) \cap \gitwo(\eps,r)] \lesssim \eps^{\xi_4}.
\end{align*}
By Proposition~\ref{prop:sep_i}, we then deduce the upper bound 
$$
\Pb[\itwo(\eps,r)] \lesssim \eps^{\xi_4}.
$$
This completes the proof of \eqref{eq:pe2}, which implies  \eqref{eq:i4arm} in Theorem~\ref{thm:main}.
\end{proof}

\subsection{Boundary four-arm event}\label{sec:bdy_cle}

\begin{figure}[t]
	\centering
	\includegraphics[width=0.9\textwidth]{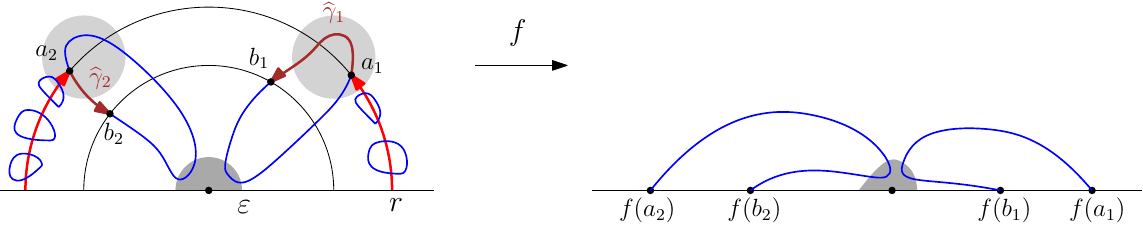}
	\caption{Exploration process performed on a configuration of $\Gamma^+$ for which $\btwo(\eps, r) \cap \gbtwo(\eps, r)$ occurs. The red curves represent $\ell_i([0,t_i])$ for $i=1,2$.}
	\label{fig:4arm_CLE_bdy}
\end{figure}

\begin{proof}[Proof of \eqref{eq:b4arm} in Theorem~\ref{thm:main}]
By Lemma~\ref{lem:compareE12_b}, it suffices to show that 
	\begin{equation}\label{eq:btwo}
		\Pb[\btwo(\eps, r)] \asymp \eps^{\xi^+_{4}(\kappa)}.
\end{equation}
We explore $\Gamma^+$ along the arcs $\ell_1:=r \exp (it)$ for $t\in(0, 3\pi/8)$ and $\ell_2:=-r\exp(-it)$ for $t\in(0, 3\pi/8)$, see Figure~\ref{fig:4arm_CLE_bdy}. 
For $i=1,2$, we trace every loop in $\Gamma$ that $\ell_i$ encounters in the counterclockwise direction, in the order that $\ell_i$ encounters them. 
 We stop this exploration the first time that we reach $B_{r/2}$, namely we stop at a time that we are tracing along a loop $\gamma_i$ that intersects $B_{r/2}$, exactly at the moment that $\gamma_i$ reaches $\partial B_{r/2}$, so that we have discovered a piece $\wh \gamma_i$ of $\gamma_i$. 
If none of the loops in $\Gamma^+$ intersect both $\ell_i$ and $\partial B_{r/2}$, then we stop this process at the time that we have discovered all the loops in $\Gamma^+$ that intersect $\ell_i$.

On the event $E_i$ that there exists a loop in $\Gamma^+$ which intersects both $\ell_i$ and $\partial B_{r/2}$, we define $\gamma_i$ and $\wh \gamma_i$ just as above. 
Let $a_i$ and $b_i$ be the endpoints of $\wh \gamma_i$ (traced from $a_i$ to $b_i$ in the counterclockwise direction).
Let $t_i$ be the first time (according to the parametrization of $\ell_i$) that $\ell_i$ intersects $\gamma_i$. Let $K_i$ be the union of $\wh\gamma_i$, $\ell_i((0,t_i))$ together with all the loops in $\Gamma^+$ that $\ell_i((0,t_i))$ intersects. 
Let $E_3$ be the event that $E_1 \cap E_2$ occurs. 
On the event $E_3$, let $H$ be the connected component containing $0$ of $\Hb \setminus \overline{K_1 \cup K_2}$. Let $f$ be the unique conformal map from $H$ onto $\Hb$ with $f(0)=0$, $f(\infty)=\infty$ and $f'(0)=1$.

Let $\Sigma$ be the $\sigma$-algebra generated by $E_3$, $\wh\gamma_1, \wh\gamma_2$ and by all the loops in $\Gamma^+$ that $\ell_i((0,t_i))$ intersects. Note that $f$ and $H$ are measurable w.r.t.\ $\Sigma$.
Conditionally on $\Sigma$ and on $E_3$, the image of $(\gamma_1 \cup \gamma_2) \setminus (\wh\gamma_1\cup \wh\gamma_2)$ under $f$ is a pair of chordal $2$-SLE$_\kappa$ in $\Hb$ with the endpoints $f(b_1), f(a_1), f(b_2), f(a_2)$, that we denote by $(\wt \gamma_1, \wt \gamma_2)$. Note that the pairing pattern of the $2$-SLE$_\kappa$ is not measurable w.r.t.\ $\Sigma$. Conditionally on $\Sigma$ and on $E_3$, the probability that $f(a_1)$ is connected to $f(b_1)$ is given by an explicit function of the cross-ratio of the four points $f(b_1), f(a_1), f(b_2), f(a_2)$, which was computed in \cite{MR3843419}.

On the event $E_3$, by Lemma~\ref{lem:Koebe}, we have
\begin{align*}
B_{\eps/4}\cap \Hb \subseteq f(B_\eps\cap \Hb) \subseteq B_{4\eps}\cap \Hb.
\end{align*}
The remainder of the proof is almost the same as that of the interior case in Section~\ref{sec:int_cle}, except that we will use Theorem~\ref{thm:zhanb}, instead of Theorem~\ref{thm:zhan}.
We can similarly get the upper bound
\begin{align*}
\Pb[\btwo(\eps,r) \cap \gbtwo(\eps,r)] \lesssim \eps^{\bxi_4},
\end{align*}
which then implies \eqref{eq:btwo} by Lemma~\ref{lem:sep_b}. This completes the proof.
\end{proof}

\section{A general version of four-arm event for SLE} \label{sec:est_SLE}
In this section, we aim to prove Theorem~\ref{thm:sle_int_utc}, which provides up-to-constants estimates for general versions of interior and boundary four-arm events for SLE, defined as $\Wc^+_4 (\eps,r)$ and $\Wc_4 (a, \eps, r)$ in \eqref{eq:def_4arm_b} and \eqref{eq:def_4arm_int}.
We first review in Section~\ref{subsec:comparison} some related results on SLE arm exponents obtained in \cite{MR3846840}. Then, we deal with the interior case in Section~\ref{subsec:proof_thm}, and the boundary case in Section~\ref{subsec:proof_thm_bdy}.

\subsection{Some results on SLE arm exponents in \cite{MR3846840}}\label{subsec:comparison}
We now recall and discuss some related results on SLE arm exponents in \cite{MR3846840}.
We follow the original notations and statements in \cite{MR3846840}, with a few exceptions that we indicate in footnotes. 
Throughout, we write $\xi_{2j}$ for the interior $2j$-arm exponent, and $\bxi_j$ for the boundary $j$-arm exponent, given by \eqref{eq:arm_exp}.\footnote{The definition of the boundary $j$-arm exponent $\alpha_j^+$ in \cite{MR3846840} is shifted by one, namely $\alpha_j^+=\xi_{j+1}^+$.}

First recall the following \emph{boundary arm events} defined in \cite{MR3846840}.
Let $\eta$ be a chordal SLE$_\kappa$ in $\Hb$ from $0$ to $\infty$. Fix $y \le -4r <0 <\eps \le x$. 
Let $\wh\tau_0=\wh\sigma_0=0$. For $j\ge 1$, let $\wh\tau_j$ be the first time after $\wh\sigma_{j-1}$ that $\eta$ hits the connected component of $\partial B_\eps(x) \setminus \eta([0, \wh\sigma_{j-1}])$ that contains $x+\eps$, and let $\wh\sigma_j$ be the first time after $\wh\tau_j$ that $\eta$ hits the connected component of $\partial B_r(y)\setminus \eta([0,\wh\tau_j])$ that contains $y-r$. Define 
\begin{align}\label{eq:h1}
\Hc_{2j-1}^\alpha(\eps, x, y, r):=\{\wh\tau_j<\infty\}.
\end{align}
Let $\wt \tau_0=\wt\sigma_0=0$. For $j\ge 1$, let $\wt\sigma_j$ be the first time after $\wt \tau_{j-1}$ that $\eta$ hits the connected component of $\partial B_r(y) \setminus \eta([0,\wt\sigma_j])$ that contains $y-r$. Let $\wt\tau_j$ be the first time after $\wt\sigma_j$ that $\eta$ hits the connected component of $\partial B_\eps(x) \setminus \eta([0, \wt\sigma_j])$ containing $x+\eps$.
Define
\begin{align}\label{eq:h2}
\Hc_{2j}^\alpha(\eps, x,y,r):=\{\wt\tau_j <\infty\}.
\end{align}
Note that the definitions of $\Hc_{2j-1}^\alpha(\eps, x, y, r)$ and $\Hc_{2j}^\alpha(\eps, x,y,r)$ are restrictive on the arcs that each crossing first hits. For instance, the events depicted in Figure~\ref{fig:4arm_bdy} do not belong to this type of arm events.

The following result is part of \cite[Proposition 3.1]{MR3846840}.
\begin{proposition}[Proposition 3.1, \cite{MR3846840}]
\label{prop:wu_bdy}
Fix $\kappa\in(0,4]$.\footnote{There is another parameter $\rho$ in \cite[Proposition 3.1]{MR3846840}, which corresponds to a force point $v\ge 0$ of the SLE$_\kappa(\rho)$. Here we take $\rho=0$ and $v=x$. The statement in \cite[Proposition 3.1]{MR3846840} was made only for $\kappa\in(0,4)$, but we believe that the same argument also works for $\kappa=4$ in the case considered here}. Suppose $r\ge 1 \wedge (200 \eps)$. We have
\begin{align}
&\Pb[\Hc^\alpha_{2j-1} (\eps, x,y, r)] \lesssim x^{\bxi_{2j-1} -\bxi_{2j}} \eps^{\bxi_{2j}}, \text{ provided }  |y|\ge (40)^{2j-1}r,\\
\label{eq:2j_up}
&\Pb[\Hc^\alpha_{2j} (\eps, x,y, r)] \lesssim x^{\bxi_{2j+1} -\bxi_{2j}} \eps^{\bxi_{2j}}, \text{ provided }  |y|\ge (40)^{2j}r,\\
\label{eq:2j-1}
&\Pb[\Hc^\alpha_{2j-1} (\eps, x,y, r)] \gtrsim x^{\bxi_{2j-1} -\bxi_{2j}} \eps^{\bxi_{2j}}, \text{ provided }  x\asymp r\le  |y|\lesssim r,\\
\label{2j_lo}
&\Pb[\Hc^\alpha_{2j} (\eps, x,y, r)] \gtrsim x^{\bxi_{2j+1} -\bxi_{2j}} \eps^{\bxi_{2j}}, \text{ provided } r\le  |y|\lesssim r,
\end{align}
where the constants in $\lesssim$ and $\gtrsim$ are uniform over $x$ and $\eps$.\footnote{This is the statement given in \cite[Proposition 3.1]{MR3846840}. To be precise, in \eqref{2j_lo}, the constant in $\gtrsim$ can be made uniform over $x$ in a bounded interval $(0, c_0]$, but should depend on $c_0$. Similarly, in \eqref{eq:2j-1}, there are implicit constants $c_1, c_2$ involved in the condition $x\asymp r$. The constant in $\gtrsim$ can be made uniform over $x$ in the interval $c_1r\le x \le c_2 r$, but should at least depend on $c_1$.} 
\end{proposition}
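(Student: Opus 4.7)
The plan is to prove both the upper and lower bounds by induction on $j$, relying on the strong Markov property of $\mathrm{SLE}_\kappa$, conformal invariance, and the Koebe distortion estimate (Lemma~\ref{lem:Koebe}). The base case $j=1$ reduces to the boundary two-arm estimate: indeed, $\Hc_1^\alpha(\eps,x,y,r)$ is just the event $\{\eta \cap B_\eps(x) \ne \emptyset\}$, for which Proposition~\ref{prop:schramm} gives $\asymp (\eps/x)^{\bxi_2}$, matching $x^{\bxi_1-\bxi_2}\eps^{\bxi_2}$ since $\bxi_1=0$. The case $\Hc_2^\alpha$ follows by applying the strong Markov property at $\wt\sigma_1$ (the first time to visit $\partial B_r(y)$), and using that the SLE in the complementary domain, conformally mapped to $\Hb$, hits the image of $B_\eps(x)$ with controlled probability.

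For the inductive step on the upper bound of $\Hc_{2j}^\alpha$, I would condition on the stopping time $\wt\sigma_1$. By the domain Markov property and conformal invariance, the curve $\eta$ restarted at $\wt\sigma_1$, mapped back to $\Hb$ by $g_{\wt\sigma_1}-W_{\wt\sigma_1}$, is again an $\mathrm{SLE}_\kappa$ from $0$ to $\infty$. The image of $B_\eps(x)$ is comparable (by Koebe) to a disk of radius $\eps\, g_{\wt\sigma_1}'(x)$ about $g_{\wt\sigma_1}(x)-W_{\wt\sigma_1}$, and the remaining portion of the event is now of type $\Hc^\alpha_{2j-1}$ (or more generally, a lower-order arm event). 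The assumption $|y|\ge 40^{2j}r$ ensures that at each stage the marked points retain well-separated scales, so the Koebe distortion stays bounded. Integrating the inductive estimate against the law of $\wt\sigma_1$ and using that the derivative martingale $X_t^a\, g_t'(x)^b$ with the appropriate exponents $(a,b)$ matching $\bxi_{2j}-\bxi_{2j-1}$ yields the claimed bound $x^{\bxi_{2j+1}-\bxi_{2j}}\eps^{\bxi_{2j}}$; the odd case $\Hc^\alpha_{2j-1}$ is analogous but starts by stopping at the hit of $B_\eps(x)$.

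For the lower bound, I would use Lemma~\ref{lem:sle_tube} to construct an explicit tube that forces $\eta$ to make exactly the prescribed $j$ excursions between $B_\eps(x)$ and $B_r(y)$ in the required order, hitting the designated arcs. More precisely, in an annular region of constant aspect ratio where the hypotheses $x\asymp r$ and $|y|\lesssim r$ place all marked points at mutually comparable distances, one uses the Markov property at each successive hit of $B_\eps(x)$: after each hit, by Proposition~\ref{prop:schramm} (applied in the conformal image), the SLE reenters $B_\eps(x)$ with probability of order $\eps^{\bxi_2}$; after each return to $B_r(y)$, the cost is of order a constant. The key point is that because the marked points remain at comparable scales throughout (unlike in the upper bound, where large separations are needed), the accumulated conformal distortions remain bounded, and the product of the $j$ small-scale factors of order $\eps^{\bxi_2}$ combines to give $\eps^{\bxi_{2j}}$ with uniform prefactor, matching the exponent.

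The main obstacle will be handling the arc-specific hitting constraints appearing in the definitions of $\wh\tau_j,\wh\sigma_j,\wt\tau_j,\wt\sigma_j$: at each visit the SLE must hit a \emph{particular} connected component of $\partial B_\eps(x)\setminus \eta([0,\cdot])$ or $\partial B_r(y)\setminus \eta([0,\cdot])$, determined by the earlier trajectory. Preserving this topological specification under the conformal maps of the inductive step requires a careful analysis of which boundary arcs of the slit domain remain accessible; this is precisely why the exponents differ by $\bxi_{2j+1}-\bxi_{2j}$ versus $\bxi_{2j-1}-\bxi_{2j}$ between the even and odd cases, and why the upper and lower bounds are proved under different regimes for $|y|$. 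Making this bookkeeping rigorous, via the two-sided martingale expressions from \cite{MR1879851,MR1899232}, is the technical core of the argument.
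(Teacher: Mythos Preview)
The paper does not prove this proposition at all: it is quoted from \cite[Proposition~3.1]{MR3846840} and used as an external input, so there is no proof in the paper to compare against.

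Your upper-bound sketch is in the spirit of the argument in \cite{MR3846840} (induction on $j$, strong Markov property at the successive stopping times, Koebe control of the conformal images, and martingales built from powers of $g_t'(x)$ and of the image of $x$), though you have left the substantial technical work undone.

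Your lower-bound argument, however, contains a genuine error. You assert that each return of $\eta$ to $B_\eps(x)$ costs a factor $\eps^{\bxi_2}$ and that ``the product of the $j$ small-scale factors of order $\eps^{\bxi_2}$ combines to give $\eps^{\bxi_{2j}}$.'' But from \eqref{eq:arm_exp} one computes $\bxi_{2j}-j\,\bxi_2 = 4j(j-1)/\kappa$, which is strictly positive for $j\ge 2$; hence $\eps^{j\bxi_2}$ and $\eps^{\bxi_{2j}}$ are different orders of magnitude, and your product does not give the claimed answer (in fact $\eps^{j\bxi_2}\gg \eps^{\bxi_{2j}}$, so your argument would yield a lower bound contradicting the upper bound). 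The underlying mistake is the claim that ``the accumulated conformal distortions remain bounded'': once $\eta$ has come within distance $\eps$ of $x$, the Loewner map $g_t$ has derivative of order a negative power of $\eps$ at $x$, and the images of the designated arcs of $\partial B_\eps(x)$ are highly distorted, not comparable to balls of radius $\eps$. Obtaining the correct exponent for the lower bound therefore requires, just as for the upper bound, tracking these derivatives through the martingale machinery; a naive product of independent $\eps^{\bxi_2}$ factors cannot work.
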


We then recall the following \emph{interior arm event} defined in \cite{MR3846840}.
Let $\eta$ be a chordal SLE$_\kappa$ in $\Hb$ from $0$ to $\infty$.
Fix $z\in \Hb$, $r>0$ and $y\le -4r$.  
Let $\tau_1$ be the first time that $\eta$ hits $B_\eps(z)$.  Let $\Ec_2(\eps,z):=\{\tau_1<\infty\}$.
Let $\sigma_1$ be the first time after $\tau_1$ that $\eta$ hits the connected component containing $y-r$ of $\partial B_r(y) \setminus \eta[0, \tau_1]$. Let $\Ec$ be the event that $z$ is in the unbounded connected component of $\Hb\setminus (\eta([0, \sigma_1]) \cup B_r(y))$. On $\Ec$, let $C_z$ be the connected component of $B_\eps(z)\setminus \eta([0, \sigma_1])$ that contains $z$. Let $C_z^b$ be the unique connected component of $\partial C_z \cap \partial B_\eps(z)$ which can be connected to $\infty$ in $\Hb\setminus  (\eta([0, \sigma_1]) \cup B_\eps(z))$. Let $x_z$ be the ending point of $C_z^b$ if we orient it in the clockwise direction. 
For $j\ge 2$, let $\tau_j$ be the first time after $\sigma_{j-1}$ that $\eta$ hits the connected component of $C_z^b\setminus \eta([0, \sigma_{j-1}])$ containing $x_z$, and let $\sigma_j$ be the first time after $\tau_j$ that $\eta$ hits the connected component of $\partial B_r(y)\setminus \eta([0,\tau_j])$ containing $y-r$. 
For $j\ge 2$, define
\begin{align}\label{eq:int_arm_event}
\Ec_{2j} (\eps, z, y,r) =\Ec\cap \{\tau_j<\infty\}.
\end{align}
Note that the definitions of $\Ec_{2j} (\eps, z, y,r) $ is restrictive on the arcs that each crossing first hits. For instance, the events depicted in Figure~\ref{fig:4arm_event} do not belong to this type of arm events.
In the following Proposition~\ref{prop:sle_int}, there is also some non-explicit constant $R$, as well as an event $\Fc$, which add to the constraints on the $2j$-arm events considered there.

Let us cite the following proposition from  \cite{MR3846840}.
\begin{proposition}[Proposition 4.1, \cite{MR3846840}]
\label{prop:sle_int}
Fix $\kappa\in(0,4]$.\footnote{The statement in \cite[Proposition 4.1]{MR3846840} was made only for $\kappa\in(0,4)$, but we believe that the same argument also works for $\kappa=4$.} 
Fix $z\in \Hb$ with $|z|=1$. 
Define $\Fc=\{\eta[0,\tau_1] \subset B_R\}$. There exists $R>0$ which only depends on $\kappa$ and $z$, such that for $j\ge 1$ and $r,y$ with $R\le r \le (40)^{2j} r\le |y| \lesssim r$, we have 
\begin{align}\label{eq:e2j_wu}
\Pb[\Ec_{2j}(\eps, z, y,r) \cap \Fc] = \eps^{\xi_{2j}+o(1)}.
\end{align}
\end{proposition}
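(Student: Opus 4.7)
The plan is to deduce Proposition~\ref{prop:sle_int} (for the case $j=2$ and $\kappa\in(8/3,4]$, the case explicitly claimed to be fixed in this paper) as a direct consequence of the up-to-constants estimate for the general interior four-arm SLE event, namely Theorem~\ref{thm:sle_int_utc}. The value of Theorem~\ref{thm:sle_int_utc} is that it imposes \emph{no} topological constraint on the order or arc of the crossings, so it should dominate the considerably more restrictive event $\Ec_{4}(\eps,z,y,r) \cap \Fc$ essentially for free.

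For the upper bound, first I would introduce a Möbius map $\phi:\Hb \to \Db$ with $\phi(z)=0$, $\phi(0)=1$ (after rotating $\Db$) and $\phi(\infty)=a \in \partial\Db\setminus\{1\}$; since $z$ is fixed with $|z|=1$, the derivative $\phi'(z)$ is a bounded positive constant. By conformal invariance, $\phi(\eta)$ is a chordal SLE$_\kappa$ in $\Db$ from $1$ to $a$. Koebe's theorem gives $\phi(B_\eps(z)) \subseteq B_{c_1\eps}$ for all $\eps$ small, with $c_1=c_1(z)$, and the image of $\partial B_r(y)$ lies outside some $B_{c_2}$ with $c_2=c_2(z,y,r)>0$. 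I would then observe that the event $\Ec_{4}(\eps,z,y,r) \cap \Fc$ forces the image SLE to first enter $B_{c_1\eps}$, then exit $B_{c_2}$, and finally return to $B_{c_1\eps}$, i.e.\ it is contained in the unconstrained event $\Wc_{4}(a,c_1\eps,c_2)$ of \eqref{eq:def_4arm_int}. Applying \eqref{eq:sle2} yields
\begin{equation*}
\Pb[\Ec_{4}(\eps,z,y,r) \cap \Fc] \le \Pb[\Wc_{4}(a,c_1\eps,c_2)] \lesssim \eps^{\xi_4(\kappa)},
\end{equation*}
which is in fact strictly sharper than the $\eps^{\xi_4+o(1)}$ bound stated in Proposition~\ref{prop:sle_int}. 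For the matching lower bound, the direct explicit construction of \cite{MR3846840} survives, or alternatively one can transport the lower bound in Theorem~\ref{thm:sle_int_utc} back from $\Db$ to $\Hb$ through $\phi^{-1}$, producing a configuration of probability $\gtrsim \eps^{\xi_4(\kappa)}$ that lies in $\Ec_{4}(\eps,z,y,r) \cap \Fc$ (by choosing $R$ large enough that the initial excursion used in the construction is contained in $B_R$).

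The main obstacle I anticipate is checking that Theorem~\ref{thm:sle_int_utc} is genuinely applicable with uniform constants: its implicit constant depends on $a$, and one must verify that $a=\phi(\infty)$ stays at positive distance from $\phi(0)=1$, which is automatic once $z$ is fixed with $|z|=1$ and $\Im z$ bounded away from $0$. A secondary point is that the inclusion $\Ec_{4}\cap\Fc\subseteq \Wc_{4}$ uses the topology-freeness of $\Wc_4$ in an essential way; this is precisely the feature that is missing in the argument of \cite{MR3846840} and that was the source of the gap identified in the introduction, so the whole strategy hinges on the availability of a genuinely unrestricted four-arm estimate. Finally, I would note that this approach is intrinsically limited to $j=2$: extending to $j\ge 3$ would require an analog of Theorem~\ref{thm:sle_int_utc} for $2j$-arm events, which the present paper does not provide.
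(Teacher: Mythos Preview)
The paper does not contain its own proof of Proposition~\ref{prop:sle_int}; the statement is quoted from \cite{MR3846840}, and Remark~\ref{rmk:wu} explains that the original proof of the upper bound \eqref{eq:upper} has an essential gap. Your upper-bound strategy---map conformally to $\Db$ and invoke the topology-free estimate \eqref{eq:sle2}---is correct and is exactly the sense in which the present paper ``fixes'' that gap: rather than reprove Proposition~\ref{prop:sle_int}, the paper supersedes it with the stronger Theorem~\ref{thm:sle_int_utc}, from which the upper bound of Proposition~\ref{prop:sle_int} follows by the inclusion $\Ec_4\cap\Fc \subseteq \Wc_4$ (after conformal mapping and adjusting radii) that you describe.

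Your second alternative for the lower bound, however, is both circular and in the wrong direction. It is circular because in the paper the lower bound of \eqref{eq:sle2} is itself deduced from \eqref{eq:lower}, i.e., from the lower bound of Proposition~\ref{prop:sle_int}; see the opening sentence of Section~\ref{subsec:proof_thm} and the final proof of Theorem~\ref{thm:sle_int_utc}. It is in the wrong direction because the inclusion you used for the upper bound goes only one way: after conformal mapping one has $\Ec_4\cap\Fc \subseteq \Wc_4$, so a lower bound on $\Pb[\Wc_4]$ says nothing about the much more restrictive event $\Ec_4\cap\Fc$, with its prescribed arcs, the event $\Ec$, and the containment $\Fc$. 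Your first alternative---accept the lower-bound construction of \cite{MR3846840} as valid---is the correct one, and is precisely what the paper relies on.
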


More precisely, in order to establish \eqref{eq:e2j_wu}, \cite[Proposition 4.1]{MR3846840} has proved
\begin{align}
\label{eq:lower}
&\Pb[\Ec_{2j}(\eps, z, y,r) \cap \Fc] \gtrsim \eps^{\xi_{2j}},\\
\label{eq:upper}
&\Pb[\Ec_{2j}(\eps, z, y,r) \cap \Fc] \leq \eps^{\xi_{2j}+o(1)},
\end{align}
where the implicit constant in \eqref{eq:lower} can depend on $\kappa, j, y,r, z$.

\begin{remark}\label{rmk:wu}
In this remark, we point out the aforementioned gap in the proof of the upper bound \eqref{eq:upper} in \cite[Proposition 4.1]{MR3846840} (similar gaps also exist in the proofs of the three upper bounds in \cite[Proposition 3]{MR3768961}, for SLE$_\kappa$ with $\kappa\in (4,8)$).

The proof of the upper bound in \cite[Proposition 4.1]{MR3846840} relies crucially on an induction step, which estimates the probability of $\Ec_{2j}(\eps, z, y,r) \cap \Fc$ using the upper bound \eqref{eq:2j_up} on $\Pb\big[\Hc^\alpha_{2j} (\eps, x, y, r)\big]$ obtained in Proposition~\ref{prop:wu_bdy}, see  \cite[(4.2)]{MR3846840}. 
Let us follow the notations of \cite[Lemma 4.3]{MR3846840}. For $t>0$, let $\Theta_t:=\arg(g_t(z) -W_t)$.
Let $C\ge 16$ be a fixed constant. Let $\xi$ be the first time that $\eta$ hits $\partial B_{C\eps}(z)$. For $\delta\in(0, 1/16)$, let 
\begin{align*}
\wt\Fc=\{\xi<\infty, \Theta_\xi \in(\delta, \pi-\delta), \eta[0,\xi] \subset B_R\}.
\end{align*}
Let $f:=g_\xi -W_\xi$. The proof of \cite[Lemma 4.3]{MR3846840} first establishes that conditionally on $\eta[0,\xi], \wt \Fc$,
\begin{itemize}
\item $f(B_\eps(z))$ is contained in the ball centered at $|f(z)|$ with radius $32 C \eps |f'(z)|/ \delta$, where
\begin{align*}
C \eps |f'(z)| /4 \le |f(z)| \le 8 C \eps |f'(z)|/ \delta,
\end{align*}
\item $f(B_r(y))$ is contained in the ball centered at $f(y)$ with radius $4r f'(y)$, where
\begin{align*}
2y \le f(y) \le y, \quad f'(y) \asymp 1,
\end{align*}
\end{itemize}
and then argues that these two facts, together with the estimates in Proposition~\ref{prop:wu_bdy}, imply that
\begin{align}\label{eq:wu_induction}
\Pb\left[\Ec_{2j+2} (\eps, z, y, r) \mid \eta[0, \xi], \wt\Fc \right] \lesssim \left( C \eps |f'(z)| /\delta\right)^{\bxi_{2j+1}},
\end{align}
where the constant in $\lesssim$ is uniform over $C, \eps, \delta$.
\begin{figure}
	\centering
	\includegraphics[width=0.85\textwidth]{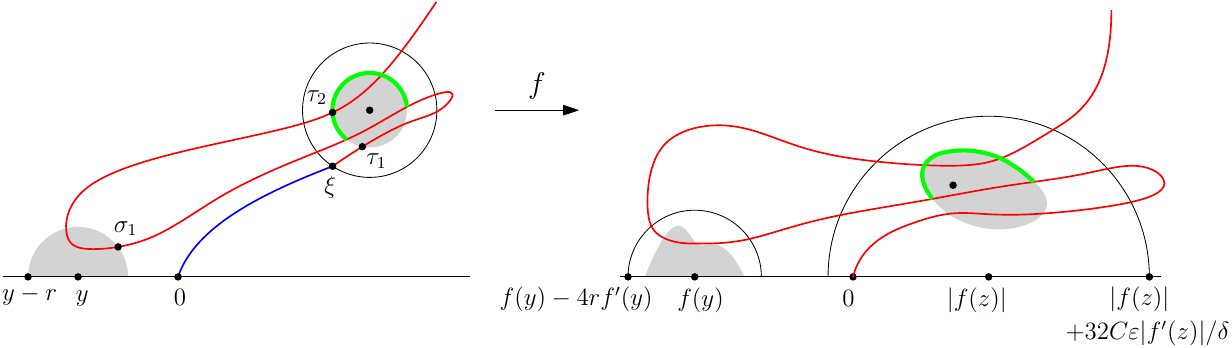}
	\caption{Illustration of Remark~\ref{rmk:wu}. On the left, the event $\Ec_4(\eps, z, y, r)$ occurs for $\eta$. The green arc is $C^b_z$. On the right, the event $\Hc^\alpha_{2} \left(32 C \eps |f'(z)|/ \delta, |f(z)|, f(y), 4 rf'(y)\right)$ does not occur for $f(\eta[\xi, \infty))$.}
	\label{fig:4arm_wu}
\end{figure}

There is no further explanation in  \cite[Lemma 4.3]{MR3846840} about why these facts imply \eqref{eq:wu_induction}, and we believe that these facts alone are not sufficient. Note that conditionally on $\eta[0, \xi], \Fc$, the event $\Ec_{2j+2} (\eps, z, y, r)$ for $\eta$ is \emph{not} included in the event $\Hc^\alpha_{2j} \left(32 C \eps |f'(z)|/ \delta, |f(z)|, f(y), 4 rf'(y)\right)$ for $f(\eta[\xi, \infty))$, due to the restrictive conditions imposed on the events $\Hc^\alpha_{2j}$, see Figure~\ref{fig:4arm_wu} for a counterexample.

We believe that in order to achieve the induction step \eqref{eq:wu_induction}, one needs to prove the same type of upper bound as \eqref{eq:2j_up}, but for a more general boundary arm event than $\Hc^\alpha_{2j}$, where one relaxes the constraints on the topology of the crossings. 
However, the current strategy of proof of \eqref{eq:2j_up} in \cite{MR3846840} is very reliant on the choice of \emph{one} specific arc that the SLE curve hits each time, because it is difficult to control the images of the \emph{infinitely} many arcs (i.e.\ connected components) of $\partial B_r(y) \setminus \eta([0,\wt\sigma_j])$ or $\partial B_\eps(x) \setminus \eta([0, \wt\sigma_j])$ under the relevant conformal maps, in particular because there can be infinitely many arcs on both sides of $\eta([0, \wt\sigma_j])$.
\end{remark}

\subsection{Interior four-arm event}\label{subsec:proof_thm}

As mentioned earlier, the lower bound in \eqref{eq:sle2} is implied by the lower bound \eqref{eq:lower}.
We thus focus on proving the up-to-constant upper bound in \eqref{eq:sle2}, which we do by contradiction. 
Recall the definition of the interior four-arm event $\Wc_4 (a, \eps, r)$ given in \eqref{eq:def_4arm_int}.
We make the following assumption.
\begin{ass}\label{ass}
There exist $a_0\in\partial\Db\setminus \{1\}$ and $r_0\in(0,1)$, such that for all $C>0$ and $\eps_0>0$, there exists $\eps\in (0, \eps_0)$, such that
$\Pb[\Wc_4 (a_0, \eps, r_0)] \ge C \eps^{\xi_4}$.
\end{ass}
We produce a contradiction under Assumption~\ref{ass}, which will prove the upper bound in Theorem~\ref{thm:sle_int_utc}.

\begin{lemma}\label{lem:assump}
Suppose that Assumption~\ref{ass} holds. Then for all $a\in\partial\Db\setminus \{1\}$ and $r\in(0,r_0/8]$, for all $C>0$ and $\eps_0>0$, there exists $\eps\in (0, \eps_0)$, such that
$\Pb[\Wc_4 (a, \eps, r)] \ge C \eps^{\xi_4}$.
\end{lemma}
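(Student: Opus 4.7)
The plan is to deduce Lemma~\ref{lem:assump} from Assumption~\ref{ass} via two reductions: first reducing the outer radius from $r_0$ to $r$, and then changing the target point from $a_0$ to $a$.

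\textbf{Reducing the outer radius.} Since $r\le r_0$, for any SLE$_\kappa$ trace $\eta$ from $1$ to $a$, the event $\Wc_4(a,\eps,r_0)$ is contained in $\Wc_4(a,\eps,r)$: once $\eta$ enters $B_\eps$, it must cross $\partial B_r$ on its way out to $\partial B_{r_0}$, and the condition that it subsequently returns to $B_\eps$ is unchanged. Thus $\Pb[\Wc_4(a,\eps,r)]\ge\Pb[\Wc_4(a,\eps,r_0)]$, and the task reduces to establishing, with an implicit constant independent of $\eps$,
\[
\Pb[\Wc_4(a_0,\eps,r_0)] \lesssim \Pb[\Wc_4(a,\eps,r_0)].
\]

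\textbf{Changing the target.} For this step I would invoke the absolute continuity between the laws of SLE$_\kappa$ with different targets. Fix $\delta>0$ small enough that the closed balls $\bar B_\delta(a)$ and $\bar B_\delta(a_0)$ are disjoint from each other and from $\bar B_{r_0}$, and set $U:=\Db\setminus(\bar B_\delta(a)\cup \bar B_\delta(a_0))$. Let $\theta$ denote the first exit time of $\eta$ from $U$, and put $\wt\Wc_4:=\Wc_4\cap\{\tau_2<\theta\}$. By the standard target-change theory for SLE$_\kappa$ (the Radon--Nikodym derivative between the law of SLE$_\kappa$ from $1$ to $a$ and that from $1$ to $a_0$, restricted to $\Fc_{\tau_2\wedge\theta}$, is a uniformly bounded ratio of boundary Poisson kernels provided the trace stays in $\bar U$), one gets
\[
\Pb[\wt\Wc_4(a_0,\eps,r_0)] \asymp \Pb[\wt\Wc_4(a,\eps,r_0)] \le \Pb[\Wc_4(a,\eps,r_0)].
\]
Combined with the trivial inclusion $\wt\Wc_4\subseteq\Wc_4$, the lemma reduces to the \emph{localization} statement
\[
\Pb[\Wc_4(a_0,\eps,r_0)] \lesssim \Pb[\wt\Wc_4(a_0,\eps,r_0)],
\]
i.e.\ that conditionally on $\Wc_4(a_0,\eps,r_0)$, the event $\{\tau_2<\theta\}$ has conditional probability bounded below uniformly in $\eps$.

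\textbf{Main obstacle: the localization.} This is where the real work lies. The delicate point is that Assumption~\ref{ass} allows $\Pb[\Wc_4(a_0,\eps,r_0)]$ to be arbitrarily larger than $\eps^{\xi_4}$, so one cannot afford to bound the complementary event $\Wc_4\cap\{\theta\le\tau_2\}$ naively by an iterated two-arm estimate from Proposition~\ref{prop:beffara} (a product such as $\eps^{2\xi_2}$ need not dominate the free four-arm probability). Instead, I would condition on $\eta[0,\sigma_1]\subset \bar B_{r_0}\subset U$ and apply the domain Markov property: the continuation is an SLE$_\kappa$ from $\eta(\sigma_1)$ to $a_0$ in the simply connected slitted domain $H:=\Db\setminus\eta[0,\sigma_1]$, and the task becomes to show that, conditionally on this continuation hitting $B_\eps$, it does so before reaching $\partial U$, with conditional probability bounded below uniformly in $\eps$ and in $\eta[0,\sigma_1]$. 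Uniformizing $H$ onto $\Db$ by a conformal map $f$, Koebe-type distortion estimates (the hypothesis $r\le r_0/8$ in the statement provides the margin needed to keep $f(\bar B_{r_0})$ at bounded distance from $f(\bar B_\delta(a)\cup \bar B_\delta(a_0))$ across all admissible $H$) reduce the question to a uniform statement in $\Db$. The required lower bound is then extracted by describing the conditioned continuation as an SLE$_\kappa(\rho)$ with an attractive interior force point at $B_\eps$, and applying Lemma~\ref{lem:sle_tube} to a prescribed bounded-geometry tube that routes the drifted curve into $B_\eps$ while avoiding any neighborhood of $a$ and $a_0$ until after the return time $\tau_2$.
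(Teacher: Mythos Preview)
Your strategy is genuinely different from the paper's, and the gap lies exactly where you flag it: the localization $\Pb[\Wc_4(a_0,\eps,r_0)] \lesssim \Pb[\wt\Wc_4(a_0,\eps,r_0)]$ is not established by the argument you sketch.

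Several concrete problems. First, the sentence ``condition on $\eta[0,\sigma_1]\subset \bar B_{r_0}\subset U$'' cannot be right: $\eta(0)=1\in\partial\Db$, so $\eta[0,\sigma_1]$ is never contained in $\bar B_{r_0}$. More importantly, nothing prevents $\eta$ from wandering close to $a_0$ (so that $\theta<\tau_1$ or $\theta<\sigma_1$) before making its four crossings; your conditioning on $\eta[0,\sigma_1]$ tacitly assumes this has not happened, which is precisely the localization you are trying to prove. Second, conditioning the continuation to hit $B_\eps$ does \emph{not} produce an $\SLE_\kappa(\rho)$ with a single interior force point; the conditioned law is a Doob $h$-transform by the hitting probability of a ball, which is not of $\SLE_\kappa(\rho)$ type. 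Third, Lemma~\ref{lem:sle_tube} is stated and proved only for unconditioned $\SLE_\kappa$, so you cannot apply it to the conditioned process without further work. Finally, even if these issues were repaired, you would need the resulting lower bound to be uniform over \emph{all} realizations of $\eta[0,\sigma_1]$, but this set can be arbitrarily wild (e.g.\ coming very close to $a_0$), so after uniformization the geometry does not stay bounded.

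The paper sidesteps the localization entirely. Instead of changing the target by absolute continuity, it runs the $\SLE_\kappa$ from $a$ to $1$ inside a thin tube along the boundary arc from $a$ toward $a_0$ (positive probability by Lemma~\ref{lem:sle_tube}, independent of $\eps$), and stops at the first time $T_0$ that the uniformizing map $f_{T_0}$ sends the tip to $a_0$. A harmonic-measure argument shows $T_0$ occurs before the curve leaves the tube. On this event $f_{T_0}'(0)\in[1,2]$, so Koebe gives $f_{T_0}(B_\eps)\supseteq B_{\eps/4}$ and $f_{T_0}(B_{r_0/8})\subseteq B_{r_0}$; hence conditionally on $\Fc_{T_0}$ the four-arm event for $\eta$ at scales $(\eps,r_0/8)$ dominates the four-arm event for a fresh $\SLE_\kappa$ from $a_0$ to $1$ at scales $(\eps/4,r_0)$, to which Assumption~\ref{ass} applies directly. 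This avoids any need to compare $\Wc_4$ with a restricted version of itself.
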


\begin{figure}
	\centering
	\includegraphics[width=0.8\textwidth]{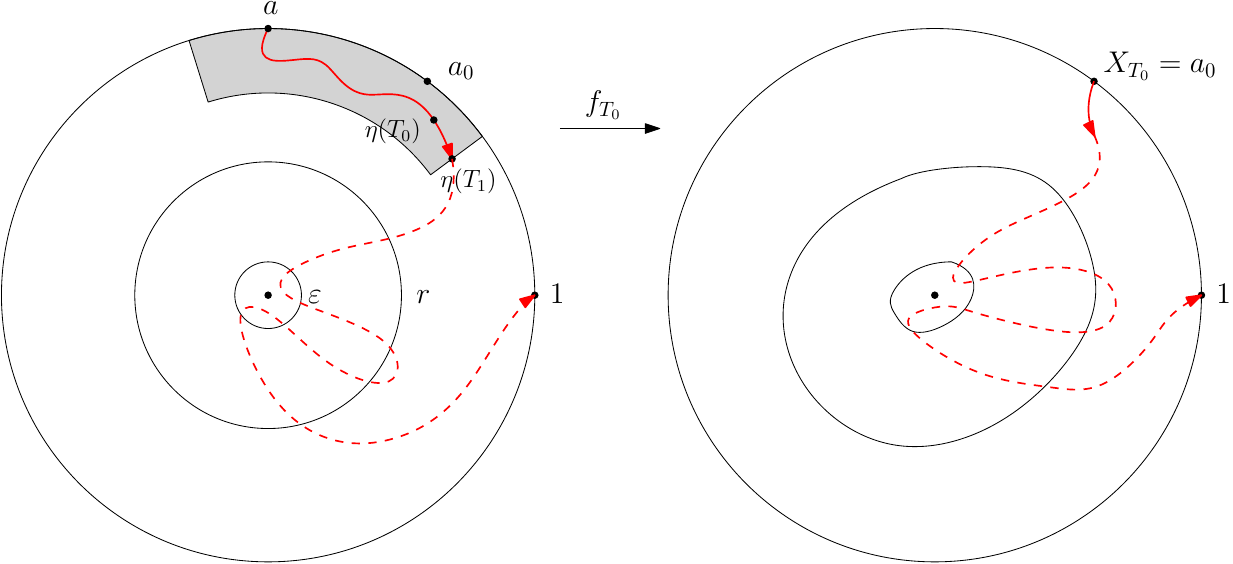}
	\caption{Illustration of the proof of Lemma~\ref{lem:assump}. The grey area is $U$.}
	\label{fig:4arm_SLE}
\end{figure}

\begin{proof}
Fix $C>0$ and $\eps_0>0$.
Without loss of generality, we can suppose that $a$ is on the clockwise arc from $1$ to $a_0$, see Figure~\ref{fig:4arm_SLE} (left). If $a$ is on the counterclockwise arc from $1$ to $a_0$, then it suffices to consider the mirror symmetry of this picture w.r.t.\ the real axis. Note that the image of a chordal SLE curve under such a mirror symmetry is still a chordal SLE (whose endpoints are the images of the endpoints of the original curve).

Suppose $a_0=\exp(i\theta_0)$ and $a=\exp(i\theta)$ for $0<\theta_0< \theta <2\pi$. Let $\delta_0\in (0, \min(\theta_0, 2\pi-\theta)/4)$. Let $\delta_1 \in (0, 1/2)$  be a sufficiently small quantity that we will adjust later. Let $U$ be the area bounded between the arcs $\{ e^{it},  \theta_0-\delta_0 \le t\le \theta+\delta_0\}$, $\{(1-\delta_1)e^{it},  \theta_0-\delta_0 \le t\le \theta+\delta_0\}$ and the segments $\{t e^{i(\theta_0-\delta_0)}, 1-\delta_1\le t \le 1\}$, $\{t e^{i(\theta+\delta_0)}, 1-\delta_1\le t \le 1\}$. 
Let $\eta$ be the SLE$_\kappa$ in $\Db$ from $a$ to $1$.
Let $E_0$ be the event that $\eta$ first exits $U$ through the segment $S_0:=\{t e^{i(\theta_0-\delta_0)}, 1-\delta_1\le t \le 1\}$. 
We claim that there exists $p_0>0$ which depends only on $\delta_0, \delta_1, a_0$ such that $\Pb(E_0)\ge p_0$.
Indeed, by Lemma~\ref{lem:sle_tube}, we can let $\eta$ stay in some given tube with positive probability.
We can choose this tube in a way that if $\eta$ stays in this tube, then $E_0$ occurs.

We parametrize $\eta$ according to its radial capacity, namely $|f_t'(0)|=e^t$, where $f_t$ is the conformal map from $\Db\setminus \eta([0,t])$ onto $\Db$ which leaves $0,1$ fixed.
Let $T$ be its total time length, and $T_1$ be the first time that $\eta$ exits $U$.
 Let $X_t$ denote the image under $f_t$ of the tip $\eta(t)$.
Let $T_0$ be the first time $t$ that $X_{t}=a_0$. Let us now prove that on $E_0$, we have $T_0<T_1$ a.s. 
Note that $X_0=a$ and $X_t$ is continuous in $t$. Since $X_t\not=1$ for all $t\in [0, T_1]$, it suffices to show that $X_{T_1}$ is on the clockwise arc from $a_0$ to $1$. To see that latter point, let $S$ be  the segment from $e^{i(\theta_0-\delta_0)}$ to  $\eta(T_1)$ and let $A$ be the clockwise arc from $e^{i(\theta_0-\delta_0)}$ to $1$.
The length of the clockwise arc from $X_{T_1}$ to $1$ is less than $2\pi \omega$, where $\omega$ is the harmonic measure of $S\cup A$ seen from $0$ in the domain $\Db\setminus (\eta([0, T_1]) \cup S)$.
Indeed, the preimage under $f_{T_1}$ of a Brownian motion started at $0$ which exits $\Db$ through  the clockwise arc from $X_{T_1}$ to $1$ must first exit $\Db\setminus (\eta([0, T_1]) \cup S)$ through $S \cup A$. 
Note that $\omega$ tends to $\theta_0-\delta_0$ as $\delta_1$ tends to $0$. Hence we can choose $\delta_1$ sufficiently small so that $\omega\in(0,\theta_0)$, 
which guarantees that $X_{T_1}$ is on the clockwise arc from $a_0$ to $1$. This ensures that $T_0<T_1$ a.s.\ on $E_0$. Therefore $\Pb(T_0<T_1) \ge p_0$.

Let $(\Fc_t)_{t\ge 0}$ be the filtration adapted to $\eta$. For all $r\in (0, r_0/8]$, we have
\begin{align}\label{eq:w41}
\Pb[\Wc_4 (a, \eps, r)] \ge \Pb[\Wc_4 (a, \eps, r) \one_{T_0<T_1}] =  \Eb \big[ \Pb[\Wc_4 (a, \eps, r) \mid \Fc_{T_0} ]\, \one_{T_0<T_1} \big].
\end{align}
Note that $f_{T_0}(\eta([T_0,T]))$ is distributed as a SLE$_\kappa$ in $\Db$ from $a_0$ to $1$.
On the event $T_0<T_1$, we have $1\le f_{T_0}'(0)\le 2$ by the Schwarz lemma. Therefore, by the Koebe $1/4$ theorem, we have
$$
f_{T_0} (B_{r_0/8}) \subseteq B_{r_0}, \quad f_{T_0} (B_\eps) \supseteq B_{\eps/4}.
$$
Therefore
\begin{align}\label{eq:w42}
\Pb[\Wc_4 (a, \eps, r_0/8) \mid \Fc_{T_0} ] \ge \Pb[\Wc_4(a_0, \eps/4, r_0)]
\end{align}
By Assumption~\ref{ass}, there exists $\eps \in (0, \eps_0)$ such that $\Pb[\Wc_4(a_0, \eps/4, r_0)] \ge (C/p_0) \eps^{\xi_4}$. Injecting this back into \eqref{eq:w41} and \eqref{eq:w42}, we get
\begin{align*}
\Pb[\Wc_4 (a, \eps, r_0/8)] \ge (C/p_0) \eps^{\xi_4} \Pb[T_0<T_1] \ge C  \eps^{\xi_4}.
\end{align*}
Since $\Pb[\Wc_4 (a, \eps, r)]$ is increasing as $r$ decreases, we have completed the proof.
\end{proof}

\begin{proof}[Proof of \eqref{eq:sle2} in Theorem~\ref{thm:sle_int_utc}]
We explore $\Gamma$ according to Exploration process~\ref{expl2}, and use the notations therein. We fix $r=1/2$ for the quantity $r$ in Exploration process~\ref{expl2}.
On the event $E_1$, by the Schwarz lemma, we have $1\le f'(0) \le 2/r=4$.
By the Koebe $1/4$ theorem, for $s\in (0, 1/32)$, we have
\begin{align}\label{eq:koebe2}
B_{\eps/4} \subseteq f(B_\eps), \quad f(B_s) \subseteq B_{16s}. 
\end{align}
Let $E_2$ be the event that $\wt \gamma$ (seen as a curve from $f(a)$ to $f(b)$) first intersects $B_{\eps/4}$, then intersects $\partial B_{16s}$, then intersects $B_{\eps/4}$ again. If both $E_1$ and $E_2$ hold, then $\ione(\eps, s)$ occurs. Therefore
\begin{align}\label{eq:int_lo1-2}
\Pb\big[\ione(\eps, s)\big] \ge \Pb[E_1 \cap E_2] =\Eb[ \Pb[E_2 \mid \Sigma] \one_{E_1}].
\end{align}
On $E_1$, conditionally on $\Sigma$, $\wt \gamma$ is a chordal SLE$_\kappa$ in $\Hb$ between $f(a)$ and $f(b)$. 
We now try to induce a contradiction under Assumption~\ref{ass}.
By Lemma~\ref{lem:assump}, for $s \in (0, r_0/128)$, for all $C>0$ and $\eps_0>0$, there exists $\eps\in(0, \eps_0)$, such that
$
\Pb[E_2 \mid \Sigma] \ge C \eps^{\xi_4}.
$
Injecting it back into \eqref{eq:int_lo1-2} leads to
\begin{align}\label{eq:pe1low}
\Pb\big[\ione(\eps, s)\big] \ge C \Pb[E_1]  \eps^{\xi_4}.
\end{align}
On the other hand, by \eqref{eq:i4arm}, there exists $c_0>0$ (which depends on $s, \eps_0$, but not on $C$ or $\eps$) such that $\Pb\big[\ione(\eps, s)\big]\le c_0 \eps^{\xi_4}$. Combining it with \eqref{eq:pe1low} yields
\begin{align*}
c_0 \eps^{\xi_4} \ge C \Pb[E_1]  \eps^{\xi_4}.
\end{align*}
This leads to a contradiction since $\Pb[E_1]>0$ and we can choose $C$ as big as we want. This completes the proof of \eqref{eq:sle2}.
\end{proof}

\subsection{Boundary four-arm event}\label{subsec:proof_thm_bdy}

We now prove the upper bound in \eqref{eq:sle1}. 
We will prove the following lemma, which then implies  \eqref{eq:sle1}.
Let us first define the following event.
For $y>x>1>s>\eps>0$, let $\eta$ be a chordal SLE$_\kappa$ in $\Hb$ from $-x$ to $-y$. Let $\tau_1$ be the first time that $\eta$ hits $B_\eps$. 
Let $\sigma_1$ be the first time after $\tau_1$ that $\eta$ hits $\partial B_s$.
Let $\tau_2$ be the first time after $\sigma_1$ that $\eta$ hits $B_\eps$. Let
\begin{align*}
\wt\Wc^+_4 (x,y, \eps,s):=\{\tau_2<\infty\}.
\end{align*}

\begin{lemma}\label{lem:up}
Fix $\kappa\in(8/3,4]$. For each $s\in(0,1)$, there exists $y_0>x_0>1$, such that as $\eps\to 0$, 
\begin{align}\label{eq:wtwc}
\Pb\big[\wt\Wc^+_4 (x_0,y_0, \eps,s)\big]\lesssim \eps^{\bxi_4(\kappa)},
\end{align}
where the implicit constant depends on $\kappa, s$.
\end{lemma}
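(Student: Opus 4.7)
The plan is to follow the contradiction template of the interior case \eqref{eq:sle2}: perform a CLE$_\kappa$ exploration in $\Hb$ that realizes a chordal SLE$_\kappa$ between two negative real points (after conformal uniformization) as the unexplored tail of a partially revealed CLE loop, and use the CLE four-arm estimate $\Pb[\bone(\eps,s)] \asymp \eps^{\bxi_4}$ from Theorem~\ref{thm:main} to obstruct any failure of the desired bound.

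Concretely, I would run Exploration process~\ref{expl1} (with exploration radius $r = 1$) on a CLE$_\kappa$ $\Gamma^+$ in $\Hb$, stopping the first time the traced loop $\gamma$ reaches $\partial B_{s/2}$. On the positive-probability event $E_1$ that such a $\gamma$ is found, I obtain a partial loop $\wh\gamma$ with endpoints $a, b$, as well as the conformal map $f : H \to \Hb$ normalized by $f(0) = 0$, $f(\infty) = \infty$, $f'(0) = 1$; by the CLE Markov property \cite{MR2979861}, conditionally on the exploration, the image $\wt\gamma := f(\gamma \setminus \wh\gamma)$ is a chordal SLE$_\kappa$ in $\Hb$ between the real points $f(a), f(b)$. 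Lemma~\ref{lem:Koebe} gives $B_{\eps/4} \cap \Hb \subseteq f(B_\eps \cap \Hb) \subseteq B_{4\eps} \cap \Hb$, and on the further positive-probability sub-event $G$ that no revealed loop intersects $B_{2s}$, an analogous Koebe estimate at scale $s$ holds. On $E_1 \cap G$, a four-arm configuration for $\wt\gamma$ around $0$ at scales comparable to $(\eps, s)$ forces the full loop $\gamma$ to make four crossings of $A_{\eps, s}$, that is, the event $\bone(\eps, s)$ (the piece $\wh\gamma$ stays outside $B_{s/2}$ and cannot contribute to the crossings at scale $\eps$).

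Now suppose for contradiction that Lemma~\ref{lem:up} fails. Then, in analogy with Assumption~\ref{ass} and Lemma~\ref{lem:assump}, I can find $(x_0, y_0)$ with $y_0 > x_0 > 1$ for which, along a sequence $\eps \to 0$, $\Pb[\wt\Wc^+_4(x_0, y_0, \eps, s)] / \eps^{\bxi_4} \to \infty$. A boundary analog of Lemma~\ref{lem:assump}, proved by pre-composing with Möbius automorphisms of $\Hb$ fixing $0$ (these send chordal SLE$_\kappa$ between two points on the negative real axis to chordal SLE$_\kappa$ between two other such points, introducing only a bounded Koebe-type distortion on $B_\eps$ and $B_s$), propagates this failure to a uniform lower bound over a positive-measure region $R$ of endpoint pairs. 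Since $(-f(a), -f(b))$ has a non-degenerate joint distribution (conditional on $E_1 \cap G$) that charges $R$ with positive probability, integrating over the corresponding sub-event yields $\Pb[\bone(\eps, s)] \gtrsim C\, \eps^{\bxi_4}$ for arbitrarily large $C$, contradicting Theorem~\ref{thm:main}.

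The main technical obstacle is this boundary analog of Lemma~\ref{lem:assump}: unlike the interior case, where the target lies in the bulk of $\Db$ and one drives the SLE tip past a macroscopic arc inside the domain, here both endpoints and the target live on $\partial\Hb$, so the tip-dynamics argument does not directly apply. I would handle this via the Möbius route sketched above, using the bounded Radon--Nikodym derivative between chordal SLE$_\kappa$'s with different negative-real endpoints (a consequence of SLE coordinate changes together with the restriction formula, in the spirit of Lemma~\ref{lem:sle_tube}); uniformity on compact subsets of $\{(x,y) \in \Rb^2 : 1 < x < y\}$ is enough to propagate the failure over an open region, which then feeds into the CLE exploration argument.
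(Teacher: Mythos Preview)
Your overall strategy---run Exploration process~\ref{expl1}, realize the unexplored tail as a chordal SLE$_\kappa$ between two negative real points, and derive a contradiction with the CLE boundary four-arm estimate of Theorem~\ref{thm:main}---is exactly the paper's approach. However, you have overcomplicated the contradiction by misreading the quantifier structure. Lemma~\ref{lem:up} only asserts the \emph{existence} of some pair $y_0>x_0>1$ for which the bound holds; its negation therefore reads: there exists $s_0\in(0,1)$ such that for \emph{every} pair $y>x>1$, for all $C>0$ and $\eps_0>0$, there is $\eps\in(0,\eps_0)$ with $\Pb[\wt\Wc^+_4(x,y,\eps,s_0)]>C\eps^{\bxi_4}$. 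This already applies to the random pair $(-f(b),-f(a))$ arising from the exploration, and plugging it into $\Pb[\bone(\eps,s_0/4)]\ge\Eb[\Pb[E_2\mid\Sigma]\one_{E_1}]$ yields the contradiction directly.

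Your ``boundary analog of Lemma~\ref{lem:assump}'' is thus unnecessary. The contrast with the interior case is instructive: in \eqref{eq:sle2} the target statement is ``for all $(a,r)$'', so Assumption~\ref{ass} only furnishes failure at a \emph{single} $(a_0,r_0)$, and Lemma~\ref{lem:assump} is genuinely needed to propagate. Here the authors have deliberately phrased Lemma~\ref{lem:up} with an existential quantifier on $(x_0,y_0)$, precisely so that the negation hands you the universal statement for free and no propagation step is required. Your M\"obius-propagation sketch is plausible in outline (though controlling the distortion of $\partial B_s$ under maps fixing only $0$ would need care), but it is work you can drop entirely; the paper's proof runs the exploration with $r=2$, reads off the Koebe bounds at scales $\eps$ and $s$, and goes straight from the negation to the contradiction in a few lines.
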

\begin{proof}
We explore $\Gamma^+$ according to Exploration process~\ref{expl1}, and use the notations therein.
We fix $r=2$, for the quantity $r$ in Exploration process~\ref{expl1}.
On the event $E_1$, by Lemma~\ref{lem:Koebe}, for $s\in(0,1)$, we have
\begin{align}\label{eq:koebe2}
B_{\eps/4} \cap \Hb \subseteq f(B_\eps \cap \Hb), \quad f(B_s \cap \Hb) \subseteq B_{4s}\cap \Hb. 
\end{align}
Let $E_2$ be the event that $\wt \gamma$ (seen as a curve from $f(a)$ to $f(b)$) first intersects $B_{\eps/4}$, then intersects $\partial B_{4s}$, then intersects $B_{\eps/4}$ again. If both $E_1$ and $E_2$ hold, then $\bone(\eps, s)$ occurs. Therefore
\begin{align}\label{eq:int_lo1-1}
\Pb\big[\bone(\eps, s)\big] \ge \Pb[E_1 \cap E_2] =\Eb[ \Pb[E_2 \mid \Sigma] \one_{E_1}].
\end{align}
On $E_1$, conditionally on $\Sigma$, $\wt \gamma$ is a chordal SLE$_\kappa$ in $\Hb$ between $f(a)$ and $f(b)$.

Suppose that the lemma is not true, then  there exists $s_0\in(0,1)$, such that for all $y>x>1$, $C>0$ and $\eps_0>0$, there exists $\eps\in (0, \eps_0)$, such that
$\Pb\big[\wt\Wc^+_4 (x,y, \eps/4,s_0)\big]> C \eps^{\bxi_4(\kappa)}.$
Letting $s:=s_0/4$, $y:=-f(a)$, $x:=-f(b)$, we have
\begin{align*}
\Pb[E_2 \mid \Sigma] =\Pb \big[\wt\Wc^+_4 (-f(b),-f(a), \eps/4, 4s) \big]> C \eps^{\bxi_4(\kappa)}.
\end{align*}
Putting it back into \eqref{eq:int_lo1-1}, we get
\begin{align}\label{eq:bpe1low}
\Pb\big[\bone(\eps, s)\big] \ge C \eps^{\bxi_4(\kappa)}\Pb[E_1].
\end{align}
On the other hand, by \eqref{eq:b4arm}, there exists $c_0>0$ (which depends on $s, \eps_0$, but not on $C$ or $\eps$) such that $\Pb\big[\bone(\eps, s)\big]\le c_0 \eps^{\xi_4}$. Combining it with \eqref{eq:bpe1low} yields
\begin{align*}
c_0 \eps^{\xi_4} \ge C \Pb[E_1]  \eps^{\xi_4}.
\end{align*}
This leads to a contradiction since $\Pb[E_1]>0$ and we can choose $C$ as big as we want. This completes the proof.
\end{proof}

We are now ready to prove the upper bound in \eqref{eq:sle1}.
\begin{proof}[Proof of the upper bound in \eqref{eq:sle1}]
We will prove that for $1>r>\eps>0$,
\begin{align}
\Pb[\Wc^+_4 (\eps,r)] \lesssim \eps^{\bxi_4(\kappa)},
\end{align}
where the implicit constant depends on $\kappa,r$.
Suppose that $\eta$ is a chordal SLE$_\kappa$ in $\Hb$ from $0$ to $\infty$. For $y>x>1$, let $\wt \eta$ be the image of $\eta$ under the conformal map 
$f(z)=(xyz-xy)/(-xz+y).$
Then $\wt \eta$ is a SLE$_\kappa$ in $\Hb$ from $-x$ to $-y$. Note that $f$ sends the half circle $\partial B_r(1) \cap \Hb$ (resp.\ $\partial B_\eps(1) \cap \Hb$) to the half circle with endpoints $f(1-r)$ and $f(1+r)$ (resp.\ $f(1-\eps)$ and $f(1+\eps)$). 
Let
\begin{align*}
r_0:= \min (|f(1-r)|, |f(1+r)|)= \frac{xy}{y-x(1-r)}r, \, \eps_0=\max(|f(1-\eps)|, |f(1+\eps)|)= \frac{xy}{y-x(1+\eps)}\eps.
\end{align*}
Note that $r_0\ge r$. If $\Wc^+_4 (\eps,r)$ occurs for $\eta$, then $\wt\Wc^+_4 (x,y,\eps_0,r)$ occurs for $\wt\eta$.
For $\eps$ sufficiently small, we have $\eps_0\le 2\eps xy/ (y-x)$.
Let $x:=x_0$ and $y:=y_0$ for $x_0, y_0$ chosen according to Lemma~\ref{lem:up} for $s=r$. Then we have
\begin{align*}
\Pb\big[ \Wc^+_4 (\eps,r)\big] \le \Pb\big[ \wt\Wc^+_4 (x_0, y_0, \eps_0,r)\big] \lesssim \eps^{\bxi_4(\kappa)},
\end{align*}
where the implicit constant depends on $\kappa,r$. This completes the proof.
\end{proof}

\begin{proof}[Proof of Theorem~\ref{thm:sle_int_utc}]
First, observe that the upper bound in \eqref{eq:sle2} is proved in Section~\ref{subsec:proof_thm}, while the upper bound in \eqref{eq:sle1} is proved earlier in this section. 

As for the lower bound in \eqref{eq:sle1}, note that $\Hc^\alpha_{3} (\eps, 1,-4, 1)$ is contained in $\Wc^+_4 (\eps,r)$ for all $1>r>\eps>0$. Then \eqref{eq:2j-1} implies that $\Pb[\Wc^+_4 (\eps,r)] \gtrsim \eps^{\bxi_4}$, where the implicit constant depends only on $\kappa$.

In order to get the lower bound in \eqref{eq:sle2}, we send $\Db$ onto $\Hb$ by the unique conformal map $f$ which sends $1,a$ to $0,\infty$ with $|f'(0)|=1$. Let $z=f(0)$.
Let $\eta$ be an SLE$_\kappa$ in $\Hb$ from $0$ to $\infty$. Let $R>0$ be the constant in Proposition~\ref{prop:sle_int} and let $\Fc$ be the event defined therein. 
We can choose $y$ sufficiently large, so that $y>40^4R$ and $B_R(y) \cap f(B_r)=\emptyset$. 
By Koebe $1/4$ theorem, we also have $B_{\eps/4}(z) \subseteq f(B_\eps)$.
Therefore, if the event $\Ec_{4}(\eps/4, z,y, R) \cap \Fc$ holds for $\eta$, then $\Wc_4 (a, \eps, r)$ holds for $f^{-1}(\eta)$. Then \eqref{eq:lower} implies that $\Pb[\Wc_4 (a, \eps, r)]\gtrsim \eps^{\xi_4}$, where the implicit constant depends on $\kappa, a, r$.
\end{proof}

\subsection*{Acknowledgments}

YG and PN are partially supported by a GRF grant from the Research Grants Council of the Hong Kong SAR (project CityU11307320). WQ is partially supported by a GRF grant from the Research Grants Council of the Hong Kong SAR (project CityU11308624).

\bibliographystyle{abbrv}
\bibliography{cr}

\begin{thebibliography}{10}

\bibitem{ADA99}
M.~Aizenman, B.~Duplantier, and A.~Aharony.
\newblock Path-crossing exponents and the external perimeter in 2{D}
  percolation.
\newblock {\em Phys. Rev. Lett.}, 83:1359--1362, 1999.

\bibitem{MR2187598}
M.~Bauer, D.~Bernard, and K.~Kyt{\"o}l{\"a}.
\newblock Multiple {S}chramm-{L}oewner evolutions and statistical mechanics
  martingales.
\newblock {\em J. Stat. Phys.}, 120(5-6):1125--1163, 2005.

\bibitem{MR2435854}
V.~Beffara.
\newblock The dimension of the {SLE} curves.
\newblock {\em Ann. Probab.}, 36(4):1421--1452, 2008.

\bibitem{BH2019}
S.~Benoist and C.~Hongler.
\newblock The scaling limit of critical {I}sing interfaces is {${\rm CLE}_3$}.
\newblock {\em Ann. Probab.}, 47(4):2049--2086, 2019.

\bibitem{CN2006}
F.~Camia and C.~M. Newman.
\newblock Two-dimensional critical percolation: the full scaling limit.
\newblock {\em Comm. Math. Phys.}, 268(1):1--38, 2006.

\bibitem{MR3151886}
D.~Chelkak, H.~Duminil-Copin, C.~Hongler, A.~Kemppainen, and S.~Smirnov.
\newblock Convergence of {I}sing interfaces to {S}chramm's {SLE} curves.
\newblock {\em C. R. Math. Acad. Sci. Paris}, 352(2):157--161, 2014.

\bibitem{CS2012}
D.~Chelkak and S.~Smirnov.
\newblock Universality in the 2{D} {I}sing model and conformal invariance of
  fermionic observables.
\newblock {\em Invent. Math.}, 189(3):515--580, 2012.

\bibitem{du2022sharp}
H.~Du, Y.~Gao, X.~Li, and Z.~Zhuang.
\newblock Sharp asymptotics for arm probabilities in critical planar
  percolation.
\newblock {\em Comm. Math. Phys.}, 405(182):1--51, 2024.

\bibitem{MR2358649}
J.~Dub{\'e}dat.
\newblock Commutation relations for {S}chramm-{L}oewner evolutions.
\newblock {\em Comm. Pure Appl. Math.}, 60(12):1792--1847, 2007.

\bibitem{MR2112128}
B.~Duplantier.
\newblock Conformal fractal geometry \& boundary quantum gravity.
\newblock In {\em Fractal geometry and applications: a jubilee of {B}eno\^{\i}t
  {M}andelbrot, {P}art 2}, volume~72 of {\em Proc. Sympos. Pure Math.}, pages
  365--482. Amer. Math. Soc., Providence, RI, 2004.

\bibitem{MR2581884}
B.~Duplantier.
\newblock Conformal random geometry.
\newblock In {\em Mathematical statistical physics}, pages 101--217. Elsevier
  B. V., Amsterdam, 2006.

\bibitem{GLQ2022}
Y.~Gao, X.~Li, and W.~Qian.
\newblock Multiple points on the boundaries of {B}rownian loop-soup clusters.
\newblock {\em arXiv preprint arXiv:2205.11468}, 2022.

\bibitem{GNQ1}
Y.~Gao, P.~Nolin, and W.~Qian.
\newblock {Percolation of discrete GFF in dimension two I. Arm events in the
  random walk loop soup}.
\newblock {\em arXiv preprint arXiv:2409.16230}, 2024.

\bibitem{GNQ2}
Y.~Gao, P.~Nolin, and W.~Qian.
\newblock {Percolation of discrete GFF in dimension two II. Connectivity
  properties of two-sided level sets}.
\newblock {\em arXiv preprint arXiv:2409.16273}, 2024.

\bibitem{GPS2013}
C.~Garban, G.~Pete, and O.~Schramm.
\newblock Pivotal, cluster, and interface measures for critical planar
  percolation.
\newblock {\em J. Amer. Math. Soc.}, 26(4):939--1024, 2013.

\bibitem{MR3073886}
C.~Hongler and K.~Kyt\"ol\"a.
\newblock Ising interfaces and free boundary conditions.
\newblock {\em J. Amer. Math. Soc.}, 26(4):1107--1189, 2013.

\bibitem{KS2016}
A.~Kemppainen and S.~Smirnov.
\newblock Conformal invariance in random cluster models. {II}. {F}ull scaling
  limit as a branching {SLE}.
\newblock {\em arXiv preprint arXiv:1609.08527}, 2016.

\bibitem{KS2019}
A.~Kemppainen and S.~Smirnov.
\newblock Conformal invariance of boundary touching loops of {FK} {I}sing
  model.
\newblock {\em Comm. Math. Phys.}, 369(1):49--98, 2019.

\bibitem{Ke1987a}
H.~Kesten.
\newblock Scaling relations for {$2$}{D}-percolation.
\newblock {\em Comm. Math. Phys.}, 109(1):109--156, 1987.

\bibitem{MR2310306}
M.~J. Kozdron and G.~F. Lawler.
\newblock The configurational measure on mutually avoiding {SLE} paths.
\newblock In {\em Universality and renormalization}, volume~50 of {\em Fields
  Inst. Commun.}, pages 199--224. Amer. Math. Soc., Providence, RI, 2007.

\bibitem{LP2017}
G.~Last and M.~Penrose.
\newblock {\em Lectures on the {P}oisson process}, volume~7 of {\em Institute
  of Mathematical Statistics Textbooks}.
\newblock Cambridge University Press, Cambridge, 2018.

\bibitem{MR1386294}
G.~F. Lawler.
\newblock Hausdorff dimension of cut points for {B}rownian motion.
\newblock {\em Electron. J. Probab.}, 1(2):1--20, 1996.

\bibitem{La1998}
G.~F. Lawler.
\newblock Strict concavity of the intersection exponent for {B}rownian motion
  in two and three dimensions.
\newblock {\em Math. Phys. Electron. J.}, 4:Paper 5, 67 pp.\, 1998.

\bibitem{MR1879850}
G.~F. Lawler, O.~Schramm, and W.~Werner.
\newblock Values of {B}rownian intersection exponents. {I}. {H}alf-plane
  exponents.
\newblock {\em Acta Math.}, 187(2):237--273, 2001.

\bibitem{MR1879851}
G.~F. Lawler, O.~Schramm, and W.~Werner.
\newblock Values of {B}rownian intersection exponents. {II}. {P}lane exponents.
\newblock {\em Acta Math.}, 187(2):275--308, 2001.

\bibitem{MR1899232}
G.~F. Lawler, O.~Schramm, and W.~Werner.
\newblock Values of {B}rownian intersection exponents. {III}. {T}wo-sided
  exponents.
\newblock {\em Ann. Inst. H. Poincar\'e Probab. Statist.}, 38(1):109--123,
  2002.

\bibitem{MR4291423}
G.~F. Lawler and F.~Viklund.
\newblock Convergence of loop-erased random walk in the natural
  parameterization.
\newblock {\em Duke Math. J.}, 170(10):2289--2370, 2021.

\bibitem{MR2815763}
Y.~Le~Jan.
\newblock {\em Markov paths, loops and fields}, volume 2026 of {\em Lecture
  Notes in Mathematics}.
\newblock Springer, Heidelberg, 2011.
\newblock Lectures from the 38th Probability Summer School held in Saint-Flour,
  2008, {\'E}cole d'{\'E}t{\'e} de Probabilit{\'e}s de Saint-Flour.
  [Saint-Flour Probability Summer School].

\bibitem{Lu2019}
T.~Lupu.
\newblock Convergence of the two-dimensional random walk loop-soup clusters to
  {CLE}.
\newblock {\em J. Eur. Math. Soc. (JEMS)}, 21(4):1201--1227, 2019.

\bibitem{Ma2009}
R.~Masson.
\newblock The growth exponent for planar loop-erased random walk.
\newblock {\em Electron. J. Probab.}, 14:no. 36, 1012--1073, 2009.

\bibitem{MR3843419}
J.~Miller and W.~Werner.
\newblock Connection probabilities for conformal loop ensembles.
\newblock {\em Comm. Math. Phys.}, 362(2):415--453, 2018.

\bibitem{No2008}
P.~Nolin.
\newblock Near-critical percolation in two dimensions.
\newblock {\em Electron. J. Probab.}, 13:no. 55, 1562--1623, 2008.

\bibitem{MR1776084}
O.~Schramm.
\newblock Scaling limits of loop-erased random walks and uniform spanning
  trees.
\newblock {\em Israel J. Math.}, 118:221--288, 2000.

\bibitem{MR2574734}
O.~Schramm and W.~Zhou.
\newblock Boundary proximity of {SLE}.
\newblock {\em Probab. Theory Related Fields}, 146(3-4):435--450, 2010.

\bibitem{MR2494457}
S.~Sheffield.
\newblock Exploration trees and conformal loop ensembles.
\newblock {\em Duke Math. J.}, 147(1):79--129, 2009.

\bibitem{MR2979861}
S.~Sheffield and W.~Werner.
\newblock Conformal loop ensembles: the {M}arkovian characterization and the
  loop-soup construction.
\newblock {\em Ann. of Math. (2)}, 176(3):1827--1917, 2012.

\bibitem{Sm2001}
S.~Smirnov.
\newblock Critical percolation in the plane: conformal invariance, {C}ardy's
  formula, scaling limits.
\newblock {\em C. R. Acad. Sci. Paris S\'{e}r. I Math.}, 333(3):239--244, 2001.

\bibitem{Sm2010}
S.~Smirnov.
\newblock Conformal invariance in random cluster models. {I}. {H}olomorphic
  fermions in the {I}sing model.
\newblock {\em Ann. of Math. (2)}, 172(2):1435--1467, 2010.

\bibitem{SW2001}
S.~Smirnov and W.~Werner.
\newblock Critical exponents for two-dimensional percolation.
\newblock {\em Math. Res. Lett.}, 8(5-6):729--744, 2001.

\bibitem{BCL2016}
T.~van~de Brug, F.~Camia, and M.~Lis.
\newblock Random walk loop soups and conformal loop ensembles.
\newblock {\em Probab. Theory Related Fields}, 166(1-2):553--584, 2016.

\bibitem{MR1385351}
J.~van~den Berg.
\newblock A note on disjoint-occurrence inequalities for marked {P}oisson point
  processes.
\newblock {\em J. Appl. Probab.}, 33(2):420--426, 1996.

\bibitem{MR3846840}
H.~Wu.
\newblock Alternating arm exponents for the critical planar {I}sing model.
\newblock {\em Ann. Probab.}, 46(5):2863--2907, 2018.

\bibitem{MR3768961}
H.~Wu.
\newblock Polychromatic arm exponents for the critical planar {FK}-{I}sing
  model.
\newblock {\em J. Stat. Phys.}, 170(6):1177--1196, 2018.

\bibitem{MR3718717}
H.~Wu and D.~Zhan.
\newblock Boundary arm exponents for {SLE}.
\newblock {\em Electron. J. Probab.}, 22:Paper No. 89, 26, 2017.

\bibitem{MR4082185}
D.~Zhan.
\newblock Two-curve {G}reen's function for 2-{SLE}: the interior case.
\newblock {\em Comm. Math. Phys.}, 375(1):1--40, 2020.

\bibitem{MR4235483}
D.~Zhan.
\newblock Two-curve {G}reen's function for 2-{SLE}: the boundary case.
\newblock {\em Electron. J. Probab.}, 26:Paper No. 32, 58, 2021.

\end{thebibliography}

\end{document}